\newtheorem{theorem}{Theorem}
\newtheorem{definition}[theorem]{Definition}
\newtheorem{proposition}[theorem]{Proposition}
\newtheorem{remark}[theorem]{Remark}
\newtheorem{lemma}[theorem]{Lemma}
\newtheorem{corollary}[theorem]{Corollary}
\newtheorem{Minimization problem}[theorem]{Minimization Problem}
\newcommand{\ip}[2]{\left\langle#1,#2\right\rangle}
\newcommand{\abs}[1]{\left|#1\right|}
\newcommand{\norm}[1]{\left\|#1\right\|}
\def\bT{\breve{T}}
\def\ts{\tilde{s}}
\def\tf{\tilde{f}}
\def\tg{\tilde{g}}
\def\tL{\tilde{L}}
\def\tR{\tilde{R}}
\def\tD{\tilde{D}}
\def\tpi{\tilde{\pi}}
\def\hh{\hat{h}}
\def\hg{\hat{g}}
\def\hf{\hat{f}}
\def\hs{\hat{s}}
\def\hL{\hat{L}}
\def\hT{\hat{T}}
\def\hpi{\hat{\pi}}
\def\hD{\hat{D}}
\def\Re{{\rm Re}}
\def\ln{{\rm ln}}
\def\w{\omega}
\def\e{\epsilon}
\def\l{\lambda}
\def\cF{\mathcal{F}}
\def\cH{\mathcal{H}}
\def\cL{\mathcal{L}}
\def\cU{\mathcal{U}}
\def\cW{\mathcal{W}}
\def\cS{\mathcal{S}}
\def\FF{\mathbb{F}}
\def\CC{\mathbb{C}}
\def\RR{\mathbb{R}}
\def\w{\omega}
\def\Tx{i\frac{\partial}{\partial\w}}
\def\ambig{V_{\frac{\hf}{\norm{\hf}_{\cW}}}({\frac{\hf}{\norm{\hf}_{\cS}}})}
\begin{document}

\title{Wavelet Design with Optimally Localized Ambiguity Function:  a Variational Approach}

\author[1]{Ron Levie}
\author[2]{Efrat Krimer Avraham}
\author[2]{Nir Sochen}
\affil[1]{Department of Mathematics, Ludwig-Maximilians-Universit{\"a}t M{\"u}nchen\newline {\small levie@math.lmu.de}}
\affil[2]{School of Mathematical Sciences, Tel Aviv University\newline {\small efratk2@mail.tau.ac.il, sochen@tauex.tau.ac.il}}

\date{ }

\maketitle

\begin{abstract}
In this paper, we design mother wavelets for the 1D continuous wavelet transform with some optimality properties. An optimal mother wavelet here is one that has an ambiguity function with minimal spread in the continuous coefficient space (also called phase space). Since the ambiguity function is the reproducing kernel of the coefficient space, optimal windows lead to phase space representations which are "optimally sharp." Namely, the wavelet coefficients have minimal correlations with each other. Such a construction also promotes sparsity in phase space. 
The spread of the ambiguity function is modeled as the sum of variances along the axes in phase space. In order to optimize the mother wavelet directly as a 1D signal, we pull-back the variances, defined on the 2D phase space, to the so called window-signal space. This is done using the recently developed wavelet-Plancharel theory. The approach allows formulating the optimization problem of the 2D ambiguity function as a minimization problem of the 1D mother wavelet. The resulting 1D formulation is more efficient and does not involve complicated constraints on the 2D ambiguity function. We optimize the mother wavelet using gradient descent, which yields a locally optimal mother wavelet.
\end{abstract}

\textbf{Keywords.} Continuous wavelet, mother wavelet design, uncertainty principle, Plancherel theorem, variational method


\section{Introduction}
\label{intro}
In this paper, we consider the 1D continuous wavelet transform (1D CWT in short). The 1D CWT is a special case of a generalized wavelet transform -- a signal transform which is based on taking the inner product of the input signal with a set of transformations of a window function. 
Some examples of generalized wavelet transforms are the short time Fourier transform
    (STFT) \cite{Grochenig2001_28}, the 1D CWT \cite{Grossman1984_29,Daub1992}, the Shearlet transform \cite{Guo2006}, the Curvelet
    transform \cite{CANDES2005_7}, and the dyadic wavelet transform \cite{Daub1992}. The first three examples are based on square integrable representations of a group. Such transforms are called \emph{continuous wavelet transforms} (see Subsection \ref{Continuous Wavelet Transforms}). A continuous wavelet transform is defined by the choice of the set of transformations, and the choice of the window function. In this paper, we introduce a systematic approach for choosing the window function.

    There are various approaches for window design in the literature.
	One line of work for window design is the classical method introduced by Daubechies in \cite{Daubechies_1988}, in which an orthogonal wavelet basis is designed, with a compactly supported window having some degree of vanishing moments. A high order of vanishing moments is linked with a sparser approximation of a (sufficiently regular) signal in phase space (\cite[Ch.7.1]{Mallat_2008}). Another family of approaches are adaptive methods, which also take into account the signal at hand and aim to maximize the correlation between the analyzing window and the signal (see, for example, \cite{Chapa_2001}).
	
	In this paper, we develop a method for choosing a window for the 1D continuous wavelet transform, based on an uncertainty minimization approach. The motivation comes from the use of wavelet transforms as means for measuring physical quantities of signals. %
	For example, the STFT measures
    the content of signals at different times and frequencies, and the 1D CWT measures times and scales. 
	For the measurements to be as accurate as possible, it is desirable to choose a window with minimal uncertainty with respect to the different physical quantities.

	In the following subsection, we review the evolution of the uncertainty minimization approach to window design in previous work. These past works will lead naturally to our proposed approach.

	\subsection{Window Design via Uncertainty Minimization}
	
	Consider a signal transform based on a square integrable representation $\pi(g)$ of a locally compact group $G$ in the Hilbert space $\cH$. Given an admissible window $f\in\cH$, the corresponding wavelet transform $V_f:\cH\rightarrow L^2(G)$ reads, for $s\in\cH$ and $g\in G$,
	\[V_f(s)(g) = \ip{s}{\pi(g)f}.\]

	\subsubsection{The Classical Approach to Wavelet Localization}
	
	For certain transforms, e.g., the 1D CWT and the Shearlet transform, there is a classical approach to window design. The idea is to generalize the classical localization framework of the STFT, and specifically to generalize the Heisenberg uncertainty principle. The general scheme can be described as follows.
	Consider a set of linearly independent infinitesimal self-adjoint generators $T_1,\dots, T_n$ of the
    group of transformations $\pi(G)$. Each $T_j$ generates a one-parameter group of transformations $e^{i\RR T_j}=\{e^{itT_j}\ |\ t\in\RR\}$, which is a subgroup of $\pi(G)$. Each $e^{i\RR T_j}$ is interpreted as a set of transformations that translates a certain physical quantity. For example, in the STFT, the one parameter transformation groups are translations, which change the physical quantity \emph{time}, and modulations, which change the quantity \emph{frequency}. Now, take the generators $T_1,\dots, T_n$ as the \emph{observables} of their respective physical quantities. Namely, use the variance of $f$ with respect to each $T_j$, $v_f(T_{j})$ (see Definition \ref{def: Observable localization}), 
     as a measure of the localization of $f$ with respect to the physical quantity underlying $e^{i\RR T_j}$. The overall uncertainty of the window function is then defined as the multiplication of the above variances $\prod_{j=1}^n v_f(T_j)$.
    This approach can be found in the literature, e.g.,  \cite{Ali2000_1,antoine2004_3}, and in papers, e.g.,  \cite{Dahlke2008_11,Dahlke1995_12}. In the classical case of the STFT, the generators are the frequency observable $T_{1}:f(x)\mapsto i\frac{\partial}{\partial x}f(x)$ and the time observable $T_{2}:f(x)\mapsto xf(x)$, and $v_f(T_1)$ and $v_f(T_2)$ measure the spread of $f$ in frequency and time respectively.

    For any pair of observables $T_k$ and $T_l$, the uncertainty principle poses a lower bound on the simultaneous concentration of $f$ with respect to the physical quantities underlying $T_k$ and $T_l$. Namely,
    \begin{equation}\label{uncertainty principle}
    v_f(T_k) v_f(T_l)\geq \frac{1}{4}\ip{f}{[T_k,T_l]f},
    \end{equation}
    where $[T_k,T_l]:= T_kT_l-T_lT_k$ denotes the \emph{commutator} of $T_k$ and $T_l$.
    The conventional approach to window design is to choose two observables of interest, and to find an equalizer of the above inequality -- an $f$ for which the inequality in (\ref{uncertainty principle}) becomes an equality. Such an $f$ is then declared as an uncertainty minimizers.  
    In \cite{Maass2010_39}, it was shown that this approach is erroneous, and that windows admitting equality are not  generally minimizers of the left-hand-side of (\ref{uncertainty principle}).
    Instead, in order to find a window with minimal uncertainty, one should minimize the uncertainty of $f$ using some variational method.
    
    \subsubsection{Towards a Coherent Approach to Wavelet Uncertainty}
    
    Even after applying variational methods to find the minimizer of the left-and-side of (\ref{uncertainty principle}), the above approach yields rather strange and counter-intuitive results (see e.g., \cite{Stark2011_49}).  
    The reason for that, as suggested in \cite{Levie2020}, is that the group generators $T_j$ are not appropriate for defining localization of the physical quantities underlying each transformation group $e^{i\RR T_j}$. The mistake comes from the fact that defining the observables as the generators works for the STFT ``by accident'', but this accident does not repeat in other transforms.  For the STFT case, the generator of time translations is the frequency obseravable $T_{1}:f(x)\mapsto i\frac{\partial}{\partial x}f(x)$, and the generator of modulations is the time observable $T_{2}:f(x)\mapsto xf(x)$. Hence, each of these two observables happen to be appropriate as a localization operator for the \emph{other} transformation group. 
     When combined in an uncertainty, the two generators happen to measure together the correct pair of physical quantities. However, this lucky accident does not generalize to other transforms, like the 1D CWT.  Hence, the generators cannot be taken as the observables in the general case.
	Instead, 
	\cite{Levie2014,Levie2020} suggested a distinction between the group generators and the operators that measure localization, i.e., the observables.

In \cite{Levie2020}, an interpretation of the uncertainty minimizer from the point of view of sparsity in phase space was proposed.
	Consider a signal $s$ for which a sparse representation in the 1D CWT time-scale phase space exists.
	Namely, there exists  a ``sparse phase space function''
	$F =\sum_{k=1}^{K} c_k\delta_{g_k}$
	such that $ s = V^*_f(F) = \sum_{k=1}^{K} c_k \pi(g_k) f$, where $\delta_{g_k}$ is the delta functional concentrated at $g_k\in G$.  
	Finding $F$ given only knowledge of $s$ is a difficult problem.  
	Conversely, calculating the continuous wavelet transform of $s$ is direct and simple, but does not reconstruct the sparse function $F$. To see this, consider the ambiguity function $V_f(f)\in L^2(G)$. It is known that synthesizing any function $H\in L^2(G)$, and immediately after analyzing it, $V_fV_f^* H$, is given by the convolution of $H$ with the ambiguity function $V_f(f)$ in phase space. Similarly, $V_f(s)$ is given by convolving $F$ with the ambiguity function $V_f(f)$. Therefore, finding an ambiguity function with minimal spread is desirable, as it results in a minimal blurring of $F$. For a well localized ambiguity function, the peaks $g_k$ of $F$  are well separated in $V_f(s)=V_fV_f^* F$, so they can be easily extracted from $V_f(s)$ without direct knowledge of $F$. 

	The above analysis was the motivation for the definition of the observables in \cite{Levie2020}. However, the localization measures in \cite{Levie2020} were not directly defined in phase space, not measuring the concentration of $V_f(f)$ directly, but rather based on surrogate localization measures of $f$ in the signal domain. The signal localization measures of $f$ were only shown to pose loose bounds on the decay of $V_f(f)$.
	Our goal, instead, is to define the uncertainty as the spread of $V_f(f)$ directly in phase space, and to pull-back this phase space-based definition to a formulation in the signal domain. The problem in pulling-back the uncertainty of $V_f(f)$ to a signal domain formulation in terms of $f$, is that the 1D CWT is not an isomorphism, but only an isometric embedding of the signal domain to $L^2(G)$.
	
	In a subsequent paper \cite{Levie2017}, the wavelet-Plancharel theory was developed. The wavelet-Plancherel theorem  establishes an isometric isomorphism between $L^2(G)$ and the so called \emph{window-signal space} -- the tensor product of the space of admissible windows with the space of signals.
	The wavelet-Plancherel theory introduces closed form formulas for pulling back phase space operations to window-signal space operations. In some cases, the theory results in formulations of operations applied on $V_f(s)$, as a combination of operations applied on $s$ and $f$ separately. This allows implementing 2D operations in phase space efficiently as 1D computations in the window and signal spaces.
	The wavelet-Plancherel theory is at the core of the current work, as it allows us to exactly formulate the localization measures of the ambiguity function $V_f(f)$ as 1D operations on $f$.
	
	\subsection{Related Work in Ambiguity Function Localization}
	The idea for localizing the ambiguity function was also studied in other papers. 
	In the context of the STFT, the ambiguity function is known to play an important role in the estimation of optimal localization properties in many different areas, e.g., for operator approximation by Gabor multipliers \cite{Doerfler2010} or for RADAR and coding applications \cite{Costas1984,Alltop1980,Herman2009}. 
	In \cite{feichtinger2012}, Feichtinger et al. presented a method for designing optimally localized windows in the time-frequency plane of the STFT. This is done by maximizing some measure of the concentration of the ambiguity function. Under general hypotheses and shape constraints, using a  variational method (recursive quadratizations), the algorithm converges to a window  whose ambiguity function has approximate (locally) optimal properties in the time-frequency domain.
	Our approach differs from this in several ways. First, our method is generic, and can be easily generalized to other continuous wavelet transforms based on square integrable representations. In addition,  the computations in our method are performed in the 1D window and signal spaces, in contrast to the 2D phase space, and are thus more efficient.

	\subsection{Our Contribution}
	
	We summarize our contribution as follows. 
	\begin{itemize}
	    \item We define the localization/uncertainty of the ambiguity function in the 2D phase space using variances along the axes.
	    \item
	    We utilize the wavelet-Plancherel theory to pull-back the 2D formulation  of uncertainty to a 1D formulation. The 1D formulation is more efficient and does not require complicated constraints for the 2D ambiguity function, since we optimize the window directly.
	    \item We show how to compute a locally optimal mother wavelet using calculus of variations and gradient descent.
	\end{itemize}
	The technique presented in this paper can be applied to other general wavelet transforms such as the Shearlet transform. The 1D wavelet transform can be regarded as a prototype to the localization machinery, that can be generalized to every transform for which the wavelet-Plancharel theory is applicable -- the so called \emph{semi-direct product wavelet transforms} (see \cite[Definition 8]{Levie2017}). 
	
	\subsection{Outline}\label{outline}
	In Section \ref{sec 1D CWT}, we recall the 1D continous wavelet transform and its wavelet-Plancharel theory. 
	 In Section \ref{Localization in Wavelet Analysis}, we discuss the observable-based localization theory of the 1D CWT, and define our notion of wavelet uncertainty as the localization of the ambiguity function in phase space. 
	 In Section \ref{sec uncertainty pullback} we develop formulas for the pull-back of the the phase space uncertainty to the window-signal space. We use these pull-back formulas to obtain closed-form-formulas of the phase space uncertainty 
	as a combination of window and signal localization measures. 
	In order to find a local uncertainty minimizer via a variational method, in Section \ref{calculus of variations} we develop a general theory for calculus of vriations of observables.
	In Section  \ref{sec minimization}, we use this theory  to derive the Euler-Lagrange equations for the uncertainty minimizer. Finally, in Section \ref{sec implementation}, we implement a gradient descent method  to numerically estimate an optimal window of the 1D wavelet transform.
	
	\section{The 1D Wavelet-Plancherel Theory}\label{sec 1D CWT}

	The wavelet-Plancharel framework was developed in  \cite{Levie2020,Levie2017}.
	This theory encompasses a broad class of wavelet transforms which are based on square integrable representations.  
	In this section, we recall briefly some of the fundamentals of the theory for the special case of the 1D  continuous wavelet transform.  We begin by recalling general continuous wavelet transforms.

	\subsection{Continuous Wavelet Transforms}
	\label{Continuous Wavelet Transforms}
	
	Let $G$
	be a locally compact group that we call \emph{phase space}, and let $d\mu$ denote the left Haar measure in $G$. Consider a Hilbert space $\cH$ that we call the \emph{signal space}, and let $\pi:G\to \cU(\cH)$ be a square integrable representation of $G$. Here, $\cU(\cH)$ denotes the space of unitary operators on $\cH$.
	The square integrable assumption means that there is a signal $f\in \cH$, that we call a \emph{window function} or a \emph{mother wavelet}, such that the mapping
	\[V_f:s\mapsto  \ip{s}{\pi(\cdot)f}_{\cH}\]
	maps $\cH$ to $L^2(G;d\mu)$.
Under this construction, $V_f$ is called a \emph{continuous wavelet transform}. Continuous wavelet transforms have many useful analytic properties, e.g., they are isometric embeddings to the coefficient space $L^2(G)$, they have closed form reconstruction formulas, and they have orthogonality relations that allow the formulation of the wavelet-Plancherel theory. Instead of introducing these properties in their generality, in the next subsections we recall the theory for the special case of the 1D CWT.

	\subsection{The 1D Continuous Wavelet Transform}
	\label{1D CWT}
	The 1D CWT is based on a representation of the 1D affine group ``$\alpha x+\beta$'' 
	\cite{Daub1992}. The affine group is defined as $G = \RR' \ltimes \RR$, where $\RR'=\RR\setminus\{0\}$, and where $\ltimes$ denotes the semi-direct product of groups. The group law is 
	\[(\alpha,\beta)(\alpha',\beta') = (\alpha\alpha', \beta+\alpha\beta').\]
	The wavelet system is generated by the  representation $\pi$ acting on $L^2(\RR)$ via
	\[(\pi(\alpha,\beta)f)(x) = |\alpha|^{-\frac{1}{2}}f\big(\alpha(x-\beta)\big).\]
	
	To use the wavelet-Plancherel theory of semi-direct product wavelet transforms, the subgroups of $G$ corresponding to the coordinates $\alpha$ and $\beta$ should be represented as translation groups. We thus choose a different parameterization of $G$, substituting $\alpha$ with $e^a$. Note that this parametrization only contains  positive dilations for $a\in\RR$. To include also negative dilations, we consider the reflection group $\{ -1,1\}$, and represent $G$ as $(\RR \times \{-1,1\})\ltimes\RR$. Indeed, $\RR'\sim \RR \times \{-1,1\}$, where $\alpha>0$ is equivalent to $(e^{ln(\alpha)},1)=(e^a,1)$ and  $\alpha<0$ to $(e^{ln(-\alpha)},-1)= (e^a,-1)$. The group rule of this new parameterization of $G$ is given by
	\[(a,b,c)\cdot(a',b',c') = (a+a',b+ce^{a}b',cc').\]
	With this parameterization of $G$, the representation, which by abuse of notation is still denoted by $\pi$, takes the form
    \begin{equation}\label{Pi}  
    (\pi(a,b, c)f)(x) = e^{-\frac{a}{2}}f(ce^{-a}(x-b)).
    \end{equation}
    
    The window $f\in L^2(\RR)$ is assumed to meet the \emph{admissibility condition}, which takes the following form in the frequency domain 
    \begin{equation}\label{admissibility}
    \int_{\RR}\frac{1}{|\w|}\Big|\hf(\w)\Big|^2 d\w<\infty.  
    \end{equation}
    The \emph{wavelet transform} associated to the window $f$ maps signals $s$ in $L^2(\RR)$ to functions $V_f(s)$ defined over $G$ by
	\[V_{f}(s)(a,b,c)=\ip{s}{\pi(a,b,c) f},\]
	where $\ip{\cdot}{\cdot\cdot}$ denotes the standard inner product in $L^2(\RR)$. The wavelet transform is also called the \emph{analysis operator}. 
	
	The admissibility condition ensures that the mapping $V_f$ is an isometric embedding of $L^2(\RR)$ to  $L^2(G, d\mu_G(a,b,c))$ up to a constant, where the weighted Lebesgue measure $d\mu_G(a,b,c)=e^{-a} dadb$ is the left Haar measure of $G$. More accurately, integrating measurable functions $F:G\rightarrow\CC$, under the $(a,b,c)$ pamaterization,  is given by
	\[\int_G F(a,b,c)d\mu_{G}(a,b,c) = \sum_{c\in\{-1,1\}}\iint_{\RR^2} F(a,b,c)e^{-a} dadb.\]
	We write in short $L^2(G)$ instead of $L^2(G, d\mu_G(a,b,c))$.
	
	The wavelet synthesis operator is defined as the adjoint $V_f^*$ of the wavelet transform. We have, for every $F\in L^2(G)$,
	\begin{equation}
	\label{Wsynth}
	 V_f^* F = \int_G F(a,b,c)\ \pi(a,b,c)f \ d\mu_G(a,b,c),  
	\end{equation}
	where the integration in (\ref{Wsynth}) is a weak integral.
	The synthesis operator is the pseudo inverse of the analysis operator, up to constant, and we have the reconstruction formula
	\[s = \Big(\int_{\RR} \abs{\hf(\w)}^2\frac{1}{\abs{\w}}d\w\Big)^{-1} V_f^* V_f(s) \]
	
	The isometric embedding property can be derived from the more general \emph{orthogonality relation}, given as follows.
	For every pair of signals $s_1, s_2\in L^2(\RR)$ and admissible vectors $f_1,f_2\in L^2(\RR)$, 
	\begin{equation}\label{eq orthogonality}
 \ip{V_{f_1}(s_1)}{
V_{f_2}(s_2)} =\int_{\RR} \overline{\hf_1(\w)}\hf_2(\w)\frac{1}{\abs{\w}}d\w\ip{s_1}{s_2}_{L^2(\RR)}.
	\end{equation}

	\subsection{The Wavelet-Plancharel Theory}
	\label{W-P thm}
	The wavelet transform $V_f : L^2(\RR) \mapsto L^2(G)$ is an isometric embedding of $L^2(\RR)$ into $L^2(G)$, and is not surjective. 
	Consequently, it is impossible to pull-back most phase space operators isometrically. 
	The wavelet-Plancharel theorem aims to remedy this problem. This is done by embedding the signal space $L^2(\RR)$ in a larger space, called the window-signal space, and canonically extending the wavelet transform to a isometric isomorphism between the larger signal space and $L^2(G)$. In this subsection we recall the wavelet-Plancherel theory for the special case of the 1D CWT \cite{Levie2017}.
	
    \subsubsection{The Window-Signal Space}
    \label{W-S space}
    
    We begin by defining the window-signal space in the case of the 1D CWT. To avoid carrying the Fourier transform in our formulas, we formulate the theory directly in the frequency domain.  
    The window-signal space is defined by combining two spaces, called the \emph{window space} and the \emph{signal space}.
    For the 1D CWT, the signal space is $\cS:=L^2(\RR)$, and the window space $\cW$ is defined to be the Hilbert space of measurable functions $\hat{f}:\RR\rightarrow\RR$ satisfying the admissibility condition (\ref{admissibility}), with the inner product, 
    \[\ip{\hf_1}{\hf_2}_{\cW} = \int_{\RR}\hf_1(\w)\overline{\hf_2(\w)}\frac{1}{\abs{\w}} d\w.\]
    We use the notations $\cW$ and $\cS$, either as a superscript or a subscript, to denote that the computations are done with respect to the window or signal inner product.  For example, $\norm{\hf}_{\cW}$ denotes the norm of $\hf$ in $\cW$, and $\norm{\hf}_{\cS}$ the norm in $\cS$.
    Note that not every function in $\cW$ is in $L^2(\RR)$, and that the space of admissible vectors is $\cW\cap \cS$.
    
    Next we define the window-signal space.
    \begin{definition}\label{window-signal}
    $ $
    \begin{itemize}
        \item 
        The \emph{window-signal space} $\cW\otimes \cS$ is defined as the tensor product of $\cW$ with $\cS$. Namely, 
    \[\cW\otimes \cS: = L^2(\RR^2;\frac{1}{\abs{\w'}}d\w'd\w),\]
    where $\w'$ denotes the window variable and $\w$ denotes the signal variable.  
    \item
    The \emph{tensor product operator} $(\hf,\hs)\mapsto \hf\otimes\hs$ maps window-signal pairs from $W\times S$ to $W\otimes 
 S$, 
 where the \emph{simple function} $\hf\otimes\hs$ is defined as
 \begin{equation}\label{simple tensor}
 \hf\otimes\hs(\w',\w) = \overline{\hf(\w')}\hs(\w)
 \end{equation}
    \end{itemize}
    
    \end{definition}
 
    Note that not every function in $\cW\otimes\cS$ is simple. The window-signal space $\cW\otimes\cS$ is the linear closure of the space of simple functions. Namely, every $F\in \cW\otimes\cS$ can be approximated by a finite sum of simple functions.
    Also, note that for any two simple functions, the inner product in $\cW\otimes\cS$ satisfies  
    \begin{equation}\label{simple ip}
    \ip{\hf_1\otimes\hs_1}{\hf_2\otimes\hs_2} =\overline{\ip{\hf_1}{\hf_2}}_{\cW}\ip{\hs_1}{\hs_2}_{\cS}.    
    \end{equation}
    
	\subsubsection{The Wavelet-Plancharel Transform}\label{W-P transform}
	
	The \emph{wavelet-Plancharel transform} extends the wavelet transform to an isometric isomorphism $V : \cW\otimes\cS\to L^2(G)$ as follows. For simple functions, the wavelet-Plancharel transform is defined by 
	\[V(\hf\otimes\hs) = V_{\hf} (\hs),\]
	and this definition extends by linear closure to any $F\in\cW\otimes\cS$.
	From (\ref{simple ip}) and the orthogonality relation (\ref{eq orthogonality}), it follows that $V$ is an isometric embedding into $L^2(G)$. The next theorem from \cite[Example 24]{Levie2017} states that $V$ is in fact an isometric isomorphism for the 1D continuous wavelet transform. 
    \begin{theorem}[The Wavelet-Plancharel Theorem]\label{wavelet-Plancharel}
    The wavelet-Plancherel transform 
    $V$ is an isometric isomorphism between $\cW\otimes \cS$ and $L^2(G)$. 
    \end{theorem}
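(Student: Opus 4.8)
The plan is to prove Theorem~\ref{wavelet-Plancharel} in two stages: first establish that $V$ is an isometric embedding, then establish surjectivity onto $L^2(G)$. The isometry part is already essentially granted by the excerpt: for simple tensors, combining the definition $V(\hf\otimes\hs) = V_{\hf}(\hs)$ with the orthogonality relation \eqref{eq orthogonality} gives $\ip{V(\hf_1\otimes\hs_1)}{V(\hf_2\otimes\hs_2)}_{L^2(G)} = \overline{\ip{\hf_1}{\hf_2}}_{\cW}\ip{\hs_1}{\hs_2}_{\cS}$, which by \eqref{simple ip} equals $\ip{\hf_1\otimes\hs_1}{\hf_2\otimes\hs_2}_{\cW\otimes\cS}$. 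Since finite sums of simple tensors are dense in $\cW\otimes\cS$ and $V$ is linear, the isometry extends to all of $\cW\otimes\cS$ by continuity; in particular $V$ is well-defined (independent of the representation of an element as a sum of simple tensors) and injective with closed range. So the entire content is surjectivity: I must show $\overline{\mathrm{ran}(V)} = L^2(G)$, and since the range is already closed, it suffices to show the range is dense, i.e. that $F\perp \mathrm{ran}(V)$ forces $F = 0$.

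First I would reduce the problem to a single fiber. Because the group element is $(a,b,c)$ with $c\in\{-1,1\}$, the space $L^2(G)$ splits as $L^2(G,d\mu_G)\cong L^2(\RR^2, e^{-a}\,da\,db)\oplus L^2(\RR^2, e^{-a}\,da\,db)$ according to $c = 1$ or $c = -1$, and $V_{\hf}(\hs)(a,b,c)$ can be analyzed fiberwise. On the $c=1$ fiber, I would use the Fourier transform in $x$ to write $V_f(s)(a,b,1) = \ip{s}{\pi(a,b,1)f}$ explicitly: with $(\pi(a,b,1)f)(x) = e^{-a/2}f(e^{-a}(x-b))$, the Fourier transform is $\widehat{\pi(a,b,1)f}(\w) = e^{a/2} e^{-ib\w}\hf(e^{a}\w)$, so by Plancherel $V_f(s)(a,b,1) = e^{a/2}\int_{\RR} \hs(\w)\overline{\hf(e^{a}\w)} e^{ib\w}\,d\w$, which is (up to the factor $e^{a/2}$) an inverse Fourier transform in $b$ of the function $\w\mapsto \hs(\w)\overline{\hf(e^a\w)}$ restricted appropriately. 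The key point is that the map $\hf\otimes\hs \mapsto$ (this family over $a$) should be shown to exhaust, via an appropriate change of variables $\w' = e^a\w$ (or $a = \ln|\w'/\w|$) and Parseval in $b$, a dense subspace of $L^2(\RR^2, e^{-a}\,da\,db)$. Concretely, I expect the substitution to transform the window-signal measure $\frac{1}{|\w'|}d\w'\,d\w$ into the Haar measure $e^{-a}\,da\,db$ after a Fourier transform in the second variable, showing that $V$ restricted to the $c=1$ part is, up to unitary changes of variable and a partial Fourier transform, the identity — hence onto. The $c=-1$ fiber is handled identically with the reflection $x\mapsto -x$.

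The main obstacle, and where I would spend the most care, is the interplay between the two coordinate systems: the window-signal space is parametrized by the two \emph{frequency} variables $(\w',\w)$, whereas $L^2(G)$ is parametrized by $(a,b,c)$, and the bridge between them requires both the change of variables $a\leftrightarrow \w'/\w$ \emph{and} a Fourier transform $b\leftrightarrow \w$. One must check that the Jacobian of $(\w',\w)\mapsto(a,\w)$ with $a = \ln|\w'/\w|$ exactly converts $\frac{1}{|\w'|}\,d\w'\,d\w$ into $e^{-a}\,da\,d\w$, and then that Parseval in the $\w\leftrightarrow b$ pair is an isometry onto $L^2(\RR_b)$ for each fixed $a$; the admissibility weight $\frac{1}{|\w'|}$ is precisely what makes this work and must not be lost in the bookkeeping. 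A secondary subtlety is that elements of $\cW$ need not lie in $L^2(\RR)$, so the pairing $\ip{s}{\pi(a,b,c)f}$ and the Fourier-side manipulations should first be justified on the dense subspace of admissible, Schwartz-class windows and signals, and then extended by the isometry already established. Modulo these density and measure-theoretic verifications, surjectivity follows because the image of a dense set of simple tensors is dense in each fiber of $L^2(G)$. I would also remark that this is exactly \cite[Example 24]{Levie2017}, so the argument can alternatively be quoted, but the fiberwise Fourier computation above is the self-contained route.
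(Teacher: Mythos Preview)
The paper does not actually prove this theorem; it simply cites \cite[Example 24]{Levie2017} and then quotes the explicit closed-form formula
\[
V(F)(a,b,c) = \int_{\RR} e^{2\pi i b\w}\, e^{a/2}\, F(ce^{a}\w,\w)\,d\w.
\]
Your self-contained argument is correct in outline and, in fact, recovers exactly this formula: the map you describe --- change of variables $\w' = ce^{a}\w$ followed by a Fourier transform in $\w\leftrightarrow b$ --- is precisely what the displayed formula encodes, and surjectivity follows because each of these two steps is a unitary bijection between the relevant $L^2$ spaces. So your route is not a different proof so much as an unpacking of the citation, which is useful since the paper itself offers none.

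Two small corrections in your bookkeeping. First, the Jacobian claim is slightly off: with $a=\ln|\w'/\w|$ one has $da = d\w'/|\w'|$ for fixed $\w$, so $\tfrac{1}{|\w'|}\,d\w'\,d\w$ becomes $da\,d\w$, \emph{not} $e^{-a}\,da\,d\w$. The factor $e^{-a}$ from the Haar measure is instead cancelled by $|e^{a/2}|^2=e^{a}$ coming from the prefactor in the formula; after that cancellation, Parseval in $\w\leftrightarrow b$ gives the isometry exactly. Second, your treatment of the two $c$-fibers is slightly understated: for fixed $\w$, the substitution $\w'=ce^{a}\w$ with $a\in\RR$ only sweeps out the half-line $\{\w':\operatorname{sign}(\w')=\operatorname{sign}(c\w)\}$, so it is the \emph{sum} over $c\in\{-1,1\}$ that recovers the full $(\w',\w)$-plane and hence surjectivity onto $L^2(G)$, rather than each fiber being separately onto its copy of $L^2(\RR^2,e^{-a}da\,db)$. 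With these two adjustments your argument goes through cleanly.
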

    
    In \cite[Equation (13)]{Levie2017} is was shown that the wavelet-Plancherel transform can be written by the following explicit formula. For any $F\in \cW\otimes\cS$,
    \[V(F)(a,b,c) = \int_{\RR}  e^{2\pi i b \w} e^{\frac{1}{2}a} F(ce^{a}\w,\w)d\w.\]
    The inverse wavelet-Plancherel transform, $V^*$, also has a closed form formula, presented in \cite[Section 2.2.4]{Levie2017}. We skip this explicit inversion formula as we do not use it in this paper.

    \section{Localization in Wavelet Analysis}
    \label{Localization in Wavelet Analysis}
    
    In this section, we recall the localization theory of wavelet transforms, presented in \cite{Levie2014,Levie2020,Levie2017}, for the specific case of the 1D CWT. We use the approach to define our notion of wavelet uncertainty.
    
    A well known fact from wavelet theory (see, e.g., \cite{fuhr2005}) is that the image space $V_f(L^2(\RR))$ of the wavelet transform is a reproducing kernel Hilbert space. The kernels of the image space 
	are the translations of the \emph{ambiguity function}   $V_f(f) \in L^2(G)$. 
    Namely, for a phase space function $F\in V_f(L^2(\RR))$,
    it holds, for every $(a,b,c)\in G$,  
    \[F(a,b,c) = \ip{F}{\lambda(a,b,c) V_f(f)}.\]
    Here, $\lambda(a,b,c):L^2(G)\to L^2(G)$ is the left translation in $L^2(G)$, defined 
    for functions $H\in L^2(G)$ by
      $\lambda(a,b,c)H(a',b',c')= H\big((a,b,c)^{-1}(a',b',c')\big)$. 
    We can view $\overline{V_f(f)}$ as the ``point spread function'' of the space $V_f(L^2(\RR))$, whose spread describes the ``blurrines'' 
    of the space.
    This motivates the main goal in our paper -- to make the ambiguity function as localized as possible. 
    In order to define precisely how to measure the locality of the ambiguity function, 
    we need to introduce special operators called observables.
    
    \subsection{Observables}

    An \emph{observable} in a separable Hilbert space $\cH$ is a self-adjoint or unitary operator. Observables are seen as entities that define and measure physical quanities. One way in which observable measure their underlying quantities is through expected values and variances.
    
    \begin{definition}\label{def: Observable localization}
    Let $\bT$ be an observable (i.e., a self-adjoint or unitary operator) in the Hilbert space $\cH$. 
    The \emph{expected value} and the \emph{variance} of a normalized vector $f\in {\rm Dom}(\bT)$, with respect to $\bT$, are defined to be, respectively,
    \begin{equation}\label{exp}
        e_f(\bT)= \ip{\bT f}{f},
    \end{equation}
    \begin{equation}\label{var}
        v_f(\bT)= \norm{(\bT- e_f(\bT)) f}^2.
    \end{equation}
    The expected value and variance are also called the (first order and second order) \emph{moments} of $f$ with respect to $\bT$.
    \end{definition}
    
    The simplest examples of observables are multiplicative operators.
	The \emph{phase space scale observable} is the operator $A$, defined for functions $F:G\rightarrow\CC$ by
	\begin{equation}
	    \label{Aobs}
	    A F(a,b,c) = a F(a,b,c).
	\end{equation}
    The operator $A$ is self-adjoint in the domain
    \begin{equation*}
    {\rm Dom}(A) = \{F\in L^2(G)\mid {\rm the~function~} (a,b,c)\mapsto aF(a,b,c) {\rm ~is~in~} L^2(G)\}.
    \end{equation*}
    We interpret $A$ as an entity that measures scale as follows. For a normalized $F\in L^2(G)$, we can think about $\abs{F(a,b,c)}^2$ as the weight of the point $(a,b,c)$, for each $(a,b,c)\in G$. Computing $e_F(A)=\ip{AF}{F}$ weighs the scale value $a$ of every point $(a,b,c)$ by $\abs{F(a,b,c)}^2$, and computes the weighted average of the scale values $a$. Thus, $e_F(A)$ is seen as the center of mass of $F$ along the axis $a$ in phase space. Similarly, the variance $v_F(A)$ is seen as the spread of $F$ along $a$ about its center of mass.
    Equivalently, the \emph{phase space time observable} $B$, is defined by
    \begin{equation}
	    \label{Bobs}
	    B F(a, b,c) = bF(a, b,c),
	\end{equation}
    and is self-adjoint over the domain
    \begin{equation*}{
    {\rm Dom}(B) = \{F\in L^2(G)\mid {\rm the~function~} (a,b,c))\mapsto bF(a,b, c){\rm ~is~in~} L^2(G)\}.}
    \end{equation*}

Now, we combine the moments of the ambiguity function $V_f(f)$ to define one uncertainty measure of mother wavelets. 
    Two natural ways to combine $v_{V_f(f)}(A)$ and $v_{V_f(f)}(B)$ are by  
    the multiplication $v_{V_f(f)}(A)v_{V_f(f)}(B)$ or by the sum $v_{V_f(f)}(A)+v_{V_f(f)}(B)$. Since minimizing the sum of the variances ensures both small area and small radius of the domain that contains most of the energy of the ambiguity function, we base the uncertainty on sum. 
    \begin{definition}\label{def:phase uncertainty}
    The \emph{phase space uncertainty} associated to the window $\hf$ is defined to be
    \begin{equation}\label{phase uncertainty}
        \cL(\hf) =v_{K_{\hf}}(A)+v_{K_{\hf}}(B).
    \end{equation}
    where $K_{\hf}:=\ambig$ is the normalized ambiguity function. The domain of $\cL$ is defined to be
    \[{\rm Dom} (\cL) =\{\hf\in\cW\cap\cS|~\hf\neq 0{\rm ~and~} K_{\hf}\in {\rm Dom} (A)\cap {\rm Dom} (B)\}.\]
    \end{definition}

         For an observable $\bT$, when the vector $f\in \cH$ is not normalized, we still use the notations $e_f(\bT)$ and $v_f(\bT)$ defined in (\ref{exp}) and (\ref{var}). In this case, $e_f(T)$ and $ v_f(T)$ are no longer interpreted as center of mass and spread.
     In the context of the window and signal spaces,
      we denote by $e^{\cW}_{\hf}(\bT)$ and $v^{\cW}_{\hf}(\bT)$  the expected value and variance of $\hf$ with respect to an observable $\bT$ in the space  $\cW$, and similarly denote by $e^{\cS}_{\hf}(\bT)$ and $v^{\cS}_{\hf}(\bT)$ the localization measures in the signal domain. For example
     $e^{\cW}_{\hf}(\bT)= \ip{T \hf}{\hf}_{\cW}$.

     Our next goal is to formulate the pull-back of the moments of $A$ and $B$ to the window-signal space, so $\cL(\hf)$ can be computed efficiently and directly using signal and window operations on the window function $\hf$.
    For the pull-back formulas of $\cL(\hf)$, we first define observables corresponding to localization in wavelet analysis directly in the signal domain. For that, we need to define rigorously what is meant by the statements ``the dilation group changes scale,'' and  ``translations change time.''
    
    \subsection{Transforms Associated with the CWT}
    \label{1D CWT transforms}
    In this subsection, we recall the transformation subgroups of $\pi(G)$ that are associate with the physical quantities underlying the 1D CWT: \emph{time} and \emph{scale}. We also recall two useful signal transforms in the context of wavelet analysis. 
    
    Note that $\pi(a,b,c)$ is the composition of three operators in $L^2(\RR)$, 
	\[\pi(a,b,c) =L(b) D(a) R(c),\]
	where $L$, $D$ and $R$ denote the translation, dilation, and reflection operators, respectively,
	\[L(b)f(x) = f(x-b), \quad D(a)f(x) = e^{-\frac{a}{2}}f(e^{-a} x), \quad R(c)f(x) = f(cx).\]
	We call the parameter $b\in\RR$ \emph{time}, call $a\in\RR$ \emph{scale}, and $c\in\{-1,1\}$ \emph{direction}.
	
	Since the window and signal spaces, $\cW$ and $\cS$, are defined directly as function spaces over the frequency line, we next 
	formulate $\pi$ in the frequency domain. For an operator $T$ in the time domain, we denote the pull-back to the frequency domain by $\hT := \cF T \cF^{-1}$. Here, $\cF$ denotes the Fourier transform. 
	Translation by $b$ in the time domain takes the form of modulation in frequency,
	\[\hL(b)\hf(\omega) := e^{- i b\omega}\hf(\omega).\]
	Dilation by scale $a$ in frequency is
	\[\hD (a)\hf(\w) = e^{\frac{1}{2} a }\hf (e^{a}\w).\]
	Reflection in the frequency domain stays reflection, 
	\[{\hat{R}(c)\hf(\w)=\hf(c\w)}.\]
	Overall, 
	\begin{equation}\label{hPi}
	    (\hpi(a,b,c)\hf)(\omega) = e^{- i b\omega}e^{\frac{1}{2} a }\hf (ce^{a}\w).
	\end{equation}

	One way to formulate rigorously the statement "dilations change scale," is to transform the signal space $L^2(\RR)$ into another space $L^2(Y)$, where dilations operate as translations. In this space, we can call points $\sigma\in Y$ scales.
    Next, we recall the transform from the frequency domain $L^2(\RR)$ to the so called \emph{scale space}. 
    Define the subspaces of \emph{positively supported (i.e. analytic) and negatively supported signals},
    \[L_{A^{+}}^2(\RR) = \{f\in L^2(\RR)|~ support(\hf)\subseteq\RR_+\},\]
    \[L^2_{A^{-}}(\RR)= \{f\in L^2(\RR)|~ support(\hf)\subseteq\RR_-\}.\] 
    When we define these spaces directly in the frequency domain, $L_{A^{+}}^2(\RR)$ and $L^2_{A^{-}}(\RR)$ take the forms $L^2(\RR_{+})$ and $L^2(\RR_{-})$ respectively, where
    \[L^2(\RR_{+}):= \{\hf\in L^2(\RR)|~ support(\hf)\subseteq\RR_+\},\]
    \[L^2(\RR_{-}):= \{\hf\in L^2(\RR)|~ support(\hf)\subseteq\RR_-\}.\]
    We identify the frequency domain $L^2(\RR)$ as the direct sum $L^2(\RR_{+})\oplus L^2(\RR_{-})$, and consider the two \emph{warping transforms} 
    \begin{equation}
    \begin{split}
    \label{warping}
    & W_{+}:L^2(\RR_{+})\to L^2(\RR),\quad W_{-}:L^2(\RR_{-})\to L^2(\RR)\\
    & W_{+} \hf_{+} (\sigma)=\tf_{+}(\sigma)=e^{-\sigma/2}\hf_+( e^{-\sigma}), \quad W_{-} \hf_{-} (\sigma)=\tf_{-}(\sigma)=e^{-\sigma/2}\hf_{-}(- e^{-\sigma}),\\
    \end{split}
    \end{equation}
    defined separately on the positively supported and negatively supported components, as in \cite{Levie2020}.  
	The inverse warping transforms are given by
	\[ W_{+ }^{-1}\tf_{+}(\w) =
	\left\{
	\begin{array}{cc}
	   \w^{-\frac{1}{2}}\tf_{+}(-\ln(\w))  &  {\ \  \rm if\ } \w>0 \\
	   0  & {\ \  \rm if\ } \w\leq 0
	\end{array}
	\right. = \hf_{+}(\w),\]
	\[ W_{- }^{-1}\tf_{-}(\w) =
	\left\{
	\begin{array}{cc}
	   (-\w)^{-\frac{1}{2}}\tf_{-}(-\ln(-\w))   &  {\ \  \rm if\ } \w<0 \\
	   0  & {\ \  \rm if\ } \w\geq 0
	\end{array}
	\right.
	= \hf_{-}(\w).\]
	
	The scale transform is defined as the application of the two warping transforms on the two frequency components of signals $\hf$.  
	
	\begin{definition}
	\label{scale_Def}
	$ $
	\begin{itemize}
	\item
	We define the \emph{scale space} as $L^2(\RR)^2$ , and parameterize its variable as $(\sigma,{\rm sign})$. Namely, for  $\tilde{f}=(\tf_{+}, \tf_{-})\in L^2(\RR)^2$, we define $\tilde{f}(\sigma,{\rm sign}) = \tf_{{\rm sign}}(\sigma)$. We call the variable $(\sigma,{\rm sign})$  \emph{scale}.
	The inner product in $(L^2(\RR))^2$ is defined as 
	 \[\ip{(\tf_{+}, \tf_{-})}{(\tg_{+}, \tg_{-})}_{(L^2(\RR))^2} := \ip{\tf_{+}}{\tg_{+}}_{L^2(\RR)} + \ip{\tf_{-}}{\tg_{-}}_{L^2(\RR)},\]
	 for every $(\tg_{+}, \tg_{-}), (\tf_{+}, \tf_{-})$ in $(L^2(\RR))^2$. 
	    \item 
	    The scale transform $U:L^2(\RR)\rightarrow L^2(\RR)^2$ is defined by its frequency pull-back as
	$\cF U\cF^{-1}= W_{+}\oplus W_{-}$, where $W_{+}$ and  $W_{-}$ are the warping transforms (\ref{warping}).
	Namely, for a time signal 
	$f\in L^2(\RR)$, with $\hf=\hf_{+}+\hf_{-}$ as the decomposition of $\hf$ to its positive and negative supports, we have 
	\[\cF U\cF^{-1}(\hf_{+}+\hf_{-})(\sigma,{\rm sign}) =\tf_{{\rm sign}}(\sigma).\]
	\end{itemize}
	\end{definition}
	We denote the image of $f\in L^2(\RR)$ under the scale transform in short by $Uf=\tf$. 
	It is easy to see that the scale transform is a unitary operator. Namely, 
	\begin{equation}
	    \norm{f}_{L^2(\RR)}=\norm{\tf}_{(L^2(\RR))^2}
	\end{equation}
	
	Translation and dilation in scale space take the forms
	\[U L(b) U^{-1} \tf(\sigma,{\rm sign})=\tL(b)\tf(\sigma,{\rm sign}) = e^{-i b e^{-\sigma}}\tf(\sigma,{\rm sign})  ,\]
	\begin{equation}
	\label{eq_scale_trans}
	    U D(a) U^{-1} \tf(\sigma,{\rm sign})=\tD(a)\tf(\sigma,{\rm sign}) = \tf(\sigma-a,{\rm sign}).
	\end{equation}
	Reflection in scale space is 
	\[U R(c) U^{-1} \tf(\sigma,{\rm sign})=\tR(c)\tf(\sigma,{\rm sign}) = \tf(\sigma, c\cdot{\rm sign}).\]
	Overall, 
	\begin{equation}\label{eq tpi}
	U \pi(a,b,c) U^{-1} \tf(\sigma,{\rm sign})=
	  \tpi(a,b,c)\tf(\sigma,{\rm sign}) = e^{-i b e^{-\sigma}}\tf(\sigma-a,c\cdot{\rm sign})
	\end{equation}
	
	From (\ref{eq_scale_trans}), we see that dilations operate as translations in the scale space under the scale transform, which justifies the terms \emph{scale space} and \emph{scale transform}. Namely, given a normalized function $\tf\in L^2(\RR)^2$, where $\abs{\tf(\sigma,{\rm sign})}^2$ is interpreted as the weight or probability of the scale $(\sigma,{\rm sign})$, $U D(a) U^{-1}$ translates the scale distribution by $a$.

Since both the Fourier transform and the scale transform are isometries, we have
	\[V_f(s) = V_{\hf}(\hs) =  V_{\tf}(\ts), \]
	where  
	$V_{\hf}(\hs)$ 
	and
	$V_{\tf}(\ts)$
	denote by abuse of notation
	\[V_{\hf}(\hs)(a,b,c) = \ip{\hs}{\hpi(a,b,c)\hf}_{L^2(\RR)},\quad V_{\tf}(\ts)(a,b,c) = \ip{\ts}{\tpi(a,b,c)\tf}_{\big(L^2(\RR)\big)^2}. \]
	
	Lastly, the \emph{time transform} is defined as the identity in the time domain $L^2(\RR)$, as $L(b)$ is already represented as translations in $L^2(\RR)$.

	\subsection{Observables in the Signal Domain}
	
	Next, we recall two observables of scale and time, defined in the signal space. These signal space observables will be used in the explicit formula of the pull-back of the uncertainty $\cL(\hf)$.  The signal space time and scale observables are defined as multiplicative operators in the time and the scale domains, as defined in the previous subsection. Since we focus in this paper on frequency space formulations, we define all operators directly in the frequency domain, assuming that $\cS=L^2(\RR)$ is the signal space in the frequency domain. Here, the time transform is given by $\cF^{-1}$. We define in this setting the scale transform by $\hat{U}=\cF U \cF^{-1}=W_+\oplus W_-$ (see Definition \ref{scale_Def}). In the following, we denote by $\breve{Y}$ the self-adjoint operator in $L^2(\RR)$ defined by
	\[\breve{Y}:f(y)\mapsto yf(y),\]  
    on the domain 
    \begin{equation*}
    {\rm Dom}(\breve{Y})=\{f\in L^2(\RR)\mid y\mapsto yf(y)\in L^2(\RR)\}.
    \end{equation*}
    
    \begin{definition}
    Let $\cS=L^2(\RR)$ be the frequency domain.
    \begin{enumerate}
    \item The \emph{time signal space observable}, $\bT_x$, is the operator that multiplies the signal in the time domain by its $x$ coordinate, i.e.,
    \[\bT_x = \cF\breve{Y}\cF^{-1}.\]  
    \item The \emph{scale signal space observable}, $\bT_{\sigma}$, is the operator that multiplies the signal in the scale space by its scale coordinate $\sigma$
    \[\bT_{\sigma} = \hat{U}^{-1}(\breve{Y}\oplus\breve{Y})\hat{U}.\]
    Here,
    \[(\breve{Y}\oplus\breve{Y})\tf(\sigma,{\rm sign}) = \sigma\tf(\sigma,{\rm sign}).\]
    \end{enumerate}
    \end{definition}
    
    The next claim is direct.
    \begin{proposition}
    \label{dom_obse_clas}
    \begin{enumerate}  
        \item The time observable is given by $\bT_x=i\frac{\partial}{\partial\w}$ 
        on the domain ${\rm Dom}(\bT_x)$ of all $\hf\in L^2(\RR)$ such that $\hf$  is absolutely continuous in $[\alpha,\beta]$ for every $\beta>\alpha$, and $\w\mapsto i\hf'(\w)$ is in $L^2(\RR)$. Moreover, $\bT_x$ is self-adjoint.
        \item The scale observable is given by the multiplicative operator  
    \[\bT_{\sigma}\hf(\w) = -\ln(\abs{\w})\hf(\w), \]
    on the domain 
    \[{\rm Dom}(\bT_\sigma)= \{\hf\in L^2(\RR)\mid \w\mapsto -{\rm ln}(\abs{\w})\hf(\w)\in L^2(\RR)\}.\]
    The scale observable is self-adjoint.
    \end{enumerate}
    \end{proposition}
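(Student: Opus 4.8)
The plan is to deduce both statements from two textbook facts: that multiplication by a real-valued measurable function is self-adjoint on its maximal domain, and that self-adjointness together with the corresponding domain is preserved under conjugation by a unitary operator. Under the definitions $\bT_x=\cF\breve{Y}\cF^{-1}$ and $\bT_{\sigma}=\hat{U}^{-1}(\breve{Y}\oplus\breve{Y})\hat{U}$, each observable is by construction a unitary conjugate of a multiplication operator, so the only real work is (a) computing the conjugate explicitly and (b) translating the transported domain into the stated description.

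For $\bT_x$, since $\cF$ is unitary on $L^2(\RR)$ and $\breve{Y}$ is self-adjoint on $\mathrm{Dom}(\breve{Y})=\{f\in L^2(\RR)\mid y\mapsto yf(y)\in L^2(\RR)\}$, the operator $\bT_x=\cF\breve{Y}\cF^{-1}$ is self-adjoint on $\cF(\mathrm{Dom}(\breve{Y}))$. I would then record the conjugation identity $\cF\big(x\mapsto xg(x)\big)(\w)=i(\cF g)'(\w)$ — valid for Schwartz $g$ by differentiation under the integral sign, with the Fourier convention being the one implicitly fixed by $\hL(b)\hf(\w)=e^{-ib\w}\hf(\w)$, and extended to $\mathrm{Dom}(\breve{Y})$ by density — which shows that $\bT_x$ acts as $\hf\mapsto i\hf'$. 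It then remains to identify $\cF(\mathrm{Dom}(\breve{Y}))$ with the space in the statement: $\hf\in\cF(\mathrm{Dom}(\breve{Y}))$ iff the distributional derivative $\hf'$ lies in $L^2(\RR)$, i.e.\ iff $\hf\in H^1(\RR)$, and the classical characterization of $H^1(\RR)$ then supplies an absolutely continuous representative on every $[\alpha,\beta]$ whose pointwise derivative lies in $L^2(\RR)$ and agrees with the weak one. This last equivalence — between the weak-derivative and the absolutely-continuous descriptions of $H^1(\RR)$ — is the one point that is not purely formal, and is the step I would flag as the main obstacle.

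For $\bT_{\sigma}$, I would first verify that each warping map $W_{\pm}$ of (\ref{warping}) is unitary: the substitution $\w=\pm e^{-\sigma}$ together with the weight $e^{-\sigma/2}$ gives $\int_{\RR}\abs{\tf_{\pm}(\sigma)}^2 d\sigma=\int_{\RR_{\pm}}\abs{\hf_{\pm}(\w)}^2 d\w$, and the inverse formulas in the text exhibit surjectivity; hence $\hat{U}=W_{+}\oplus W_{-}$ is unitary from $\cS=L^2(\RR)$ onto $L^2(\RR)^2$. Since $\breve{Y}\oplus\breve{Y}$, acting as $\tf(\sigma,{\rm sign})\mapsto\sigma\tf(\sigma,{\rm sign})$, is a multiplication operator on $L^2(\RR)^2$ and hence self-adjoint on its maximal domain, $\bT_{\sigma}=\hat{U}^{-1}(\breve{Y}\oplus\breve{Y})\hat{U}$ is self-adjoint on $\hat{U}^{-1}(\mathrm{Dom}(\breve{Y}\oplus\breve{Y}))$. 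Composing the explicit formulas $W_{+}\hf_{+}(\sigma)=e^{-\sigma/2}\hf_{+}(e^{-\sigma})$, multiplication by $\sigma$, and $W_{+}^{-1}$, and using $\sigma=-\ln\w$ at $\w=e^{-\sigma}$ (and $\sigma=-\ln(-\w)$ on the negative half-line), the half-power factors cancel and one obtains $\bT_{\sigma}\hf(\w)=-\ln(\abs{\w})\hf(\w)$; this simultaneously identifies the transported domain with $\{\hf\in L^2(\RR)\mid \w\mapsto-\ln(\abs{\w})\hf(\w)\in L^2(\RR)\}$, and self-adjointness can alternatively be read off at this stage since $\w\mapsto-\ln\abs{\w}$ is real-valued. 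Apart from the Sobolev characterization invoked in the first part, everything here is a bookkeeping computation with changes of variables.
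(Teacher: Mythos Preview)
Your proposal is correct; the paper itself does not give a proof but simply declares the proposition ``direct,'' and what you have written is precisely the direct verification the authors are gesturing at: unitary conjugation of a real multiplication operator, together with the explicit change-of-variables computation for $\hat{U}$ and the standard $H^1(\RR)$ characterization for the Fourier side. There is nothing to compare, since the paper offers no alternative argument.
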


    The two signal space observables $\bT_x$ and $\bT_{\sigma}$
    were used to define the uncertainty measure in \cite{Levie2014} as a combination of $v_{\hf}(\bT_x)$ and $v_{\hf}(\bT_{\sigma})$. In our theory, the uncertainty is defined via the phase space observables, instead of the 
     signal space observables. %
   The signal space observables appear in our formulations only as a result of using the wavelet-Plancherel theory to pull-back $\cL(\hf)$.

    \subsection{The Uncertainty Minimization Problem}\label{sec minimization2}

The signal space observables $\bT_x$ and $\bT_{\sigma}$ 
    are also used  
    to restrict the search space of the window, when minimizing the uncertainty $\cL(\hf)$ of Definition \ref{def:phase uncertainty}. Precisely, the window $\hf$ is assumed to satisfy $e_{\hf}(\bT_{\sigma})=e_{\hf}(\bT_x)=0$. For motivation, consider the role of $\hf$ as an analyzing function, i.e., as a mean for probing the signal content at different times and scales. From this point of view, $e_{\hf}(\bT_{\sigma})$ is interpreted as the scale location of the window ${\hf}$, and $e_{\hf}(\bT_x)$ as its time location. A window ${\hf}$ with $e_{\hf}(\bT_{\sigma})=e_{\hf}(\bT_x)=0$ will be localized in time about $0$, and have a rate of one oscillation per time unit. Now, consider the following transformation formulas from \cite[Proposition 21]{Levie2014} 
        \begin{equation}\label{eq: e_D(a)f(Ta)}
            e_{\hat{D}(a){\hf}}(\bT_{\sigma}) = e_{\hf}(\bT_{\sigma})+a,
            \end{equation}
        \begin{equation} \label{eq: e_L(b)f(Tb)}
                e_{\hat{L}(b) \hf}(\bT_x) = e_{\hf}(\bT_x)+b. 
         \end{equation}
    As a result of (\ref{eq: e_D(a)f(Ta)}) and (\ref{eq: e_L(b)f(Tb)}),  by picking a window with zero scale and time expected values, we assure that the transformed window $\hat{D}(a)\hf$ measures the scale $a$, and $\hat{L}(b)\hf$ measures the time $b$.
    Similarly, $\hat{\pi}(a,b,c)\hf$ is
    a scale-time atom that probes the scale-time pair $(a,b)$ (see \cite[Formula (58)]{Levie2017}).

To conclude this section, we formulate the minimization problem, following the above discussion.

\begin{Minimization problem}
\label{Min_prob1}
Let $\cL$ be the uncertainty from definition \ref{def:phase uncertainty}. Find
\[{\rm Arg} \min_{\hf\in {\rm Dom}(\cL)}\cL(\hf) ,\]
subject to 
$e_{\hf}(\bT_x) = e_{\hf}(\bT_{\sigma})= 0.$
\end{Minimization problem}

	\section{The Pull-Back of the Phase Space Uncertainty}
	\label{sec uncertainty pullback}
    Let $K_{\hf} =\ambig $ be the normalized ambiguity function. 
    In this section, we formulate the variances of the 2D ambiguity function $v_{K_{\hf}}(A)$ and $v_{K_{\hf}}(B)$ in terms of the 1D window function $\hf$, and hence formulate the phase space uncertainty via signal and window space computations.
	 Note that $\hf$ plays two roles in the ambiguity function, both as the window and as the signal. In addition, observe that windows and signals are treated differently in the wavelet-Plancherel theory. Therefore, it is beneficial to first study more general variances of the form $v_{V_{\hf}(\hs)}(A)$ and $v_{V_{\hf}(\hs)}(B)$ for general normalized signals $\hs\in\cS= L^2(\RR)$ and $\hf\in \cW =L^2(\RR,\frac{1}{\abs{\w}}d\w)$, which need not be identical.
	 
	\subsection{The Pull-Back of Phase Space Observables}\label{observabeles pullback}
	In this section, we recall formulas for the pull-back of the time and scale observables in the window-signal space.  
	In ~\cite[Propositions 27 and 33]{Levie2017}, there is a general formula for calculating the pull-back of a broad class of multiplicative operators for diffeomorphism the so called geometric wavelet transforms.  
	Using this general formula we can compute our case of the 1D CWT. For completeness, we provide direct computations for the pull-back of the time and scale phase space observables in \ref{appendixA}. 
	
	\begin{definition} 
	The \emph{pull-backs} of the phase space observales $A$ and $B$ of (\ref{Aobs}) and (\ref{Bobs}), to the window-signal space, are defined as follows.
	\begin{enumerate}
	    \item The pull-back $\bT_a:= V^*AV$ is defined over the domain  
	    \[{\rm Dom} (\bT_a) = \{F \in \cW\otimes\cS\mid VF\in {\rm Dom}(A)\}.\]
 	    \item The pull-back $\bT_b:= V^*BV$ is defined defined over 
	    \[{\rm Dom} (\bT_b) = \{F \in \cW\otimes\cS\mid VF\in {\rm Dom}(B)\}.\]
	\end{enumerate}
	\end{definition}
	The following proposition summarizes the results from \cite[Proposition 33 and Section 7.8]{Levie2017} for the 1D CWT. 
	
	\begin{proposition}
	\label{prop: pull-back on simple}
	\begin{enumerate}
	    \item 
	    The operator $\bT_a$ is self-adjoint, and defined by
	\[\bT_a F (\w,\w') = \big(\ln(\abs{\w'})-\ln(\abs{\w})\big)F (\w,\w')\]
	over the domain
	\[{\rm Dom}(\bT_a)= \{F\in\cW\otimes\cS\ |\ \big(\ln(\abs{\w'})-\ln(\abs{\w})\big)F (\w,\w')\in L^2\big(\RR^2; \frac{1}{\abs{\w'}}d\w' d\w\big) \}.\]
	\item
	 The operator $\bT_b$ is self-adjoint, and defined by
	\[\bT_b F (\w,\w') = \big(i\frac{\partial}{\partial\w}+i\frac{\w'}{\w}\frac{\partial}{\partial\w'}\big)F (\w,\w')\]
	over the domain ${\rm Dom}(\bT_b)$ of all functions $F\in \cW\otimes\cS$ such that their restriction to every compact interval in almost every (with respect to $\kappa\in\RR$) line of the form
	\[l_{\kappa} = \{ (\w,\kappa\w) \ |\ \w\in\RR \}\]
	is absolutely continuous, and 
	\[(\w',\w)\mapsto \big(i\frac{\partial}{\partial\w}+i\frac{\w'}{\w}\frac{\partial}{\partial\w'}\big)F (\w,\w') \in L^2\big(\RR^2; \frac{1}{\abs{\w'}}d\w' d\w\big).\]
	\end{enumerate}
	\end{proposition}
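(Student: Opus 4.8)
The plan is to compute the pull-backs $\bT_a = V^*AV$ and $\bT_b = V^*BV$ by working on simple tensors $\hf\otimes\hs$, where $V(\hf\otimes\hs) = V_{\hf}(\hs)$, and then extending by density since $\cW\otimes\cS$ is the linear closure of simple functions. For a simple function, the explicit formula $V(F)(a,b,c) = \int_{\RR} e^{2\pi i b\w}e^{\frac{1}{2}a}F(ce^{a}\w,\w)\,d\w$ (or, equivalently, the direct formula $V_{\hf}(\hs)(a,b,c) = \ip{\hs}{\hpi(a,b,c)\hf}$ with $\hpi$ as in~(\ref{hPi})) lets us write things concretely. First I would handle $\bT_a$: applying $A$ multiplies $V_{\hf}(\hs)(a,b,c)$ by $a$, so I want to find the operator on $\cW\otimes\cS$ whose image under $V$ is $a\,V_{\hf}(\hs)$. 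Since the scale parameter $a$ appears in $\hpi(a,b,c)\hf(\w) = e^{-ib\w}e^{a/2}\hf(ce^{a}\w)$ precisely as a dilation, I would differentiate the reproducing identity in $a$, or more directly exploit the known scale-space picture: under the scale transform, dilation by $a$ is translation by $a$, so $a$-multiplication in phase space corresponds to the self-adjoint generator $\bT_\sigma$ acting on the signal as $-\ln(\abs{\w})$ and, on the window side, to $+\ln(\abs{\w'})$ (the sign flip coming from the conjugation in the simple-tensor definition~(\ref{simple tensor}) and the fact that $V_{\hf}(\hs)$ pairs $\hs$ against $\hpi\hf$). Carefully tracking these two contributions gives $\bT_a F(\w,\w') = (\ln\abs{\w'} - \ln\abs{\w})F(\w,\w')$ on simple tensors, and multiplicativity is preserved under linear closure, yielding the stated formula and domain. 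Self-adjointness then follows because $\bT_a$ is unitarily equivalent (via $V$) to the self-adjoint operator $A$, with domain exactly $\{F : VF\in\mathrm{Dom}(A)\}$, which one identifies with the multiplier domain stated.

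For $\bT_b$ the computation is analogous but the time variable $b$ enters through the modulation $e^{-ib\w}$ in $\hpi$, so $B$-multiplication pulls back to a first-order differential operator. Concretely, multiplying $V_{\hf}(\hs)(a,b,c)$ by $b$ is the same as applying $i\frac{\partial}{\partial b}$, and I would push this derivative through the integral $\int e^{2\pi i b\w}e^{a/2}F(ce^a\w,\w)\,d\w$. Differentiating in $b$ brings down a factor proportional to $\w$ inside the integrand against $F(ce^a\w,\w)$; rewriting this as a derivative acting on $F$ requires using both slots of $F(\w,\w')$ because the substitution $\w'=ce^a\w$ ties the window variable to the signal variable along the lines $l_\kappa = \{(\w,\kappa\w)\}$. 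This is exactly why the operator comes out as $i\frac{\partial}{\partial\w} + i\frac{\w'}{\w}\frac{\partial}{\partial\w'}$: it is the directional derivative along $l_\kappa$, i.e. the infinitesimal generator of the flow that scales $(\w,\w')$ jointly. I would verify this by checking that $V$ intertwines this directional-derivative operator with $i\frac{\partial}{\partial b}$ on the dense set of nice simple tensors, then invoke the general pull-back machinery of \cite[Propositions 27 and 33]{Levie2017} (or the self-contained computation in \ref{appendixA}) to conclude that the closure has the stated absolute-continuity-along-$l_\kappa$ domain and is self-adjoint, again by unitary equivalence with the self-adjoint $B$.

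The main obstacle is the $\bT_b$ domain description: turning the formal identity "$b$-multiplication $=$ directional derivative along $l_\kappa$" into a rigorous statement about the exact operator domain. One must show that $F\in\mathrm{Dom}(\bT_b)$ — defined abstractly as $\{F : VF\in\mathrm{Dom}(B)\}$ — if and only if $F$ restricts to an absolutely continuous function on compact intervals of almost every line $l_\kappa$ with the directional derivative lying in $L^2(\RR^2;\frac{1}{\abs{\w'}}d\w'd\w)$. This is essentially a change-of-variables argument: the map $(\w,\kappa)\mapsto(\w,\kappa\w)$ foliates $\RR^2$ by the lines $l_\kappa$, and one disintegrates the measure $\frac{1}{\abs{\w'}}d\w'd\w$ along this foliation so that the $L^2(G)$-norm of $B\,VF$ becomes an iterated integral over $\kappa$ of one-dimensional Sobolev-type norms along each $l_\kappa$. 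The absolute-continuity condition is then the one-dimensional characterization of the maximal multiplication-by-position domain in each fiber (cf. the $\bT_x$ domain in Proposition~\ref{dom_obse_clas}), assembled over $\kappa$ by Fubini. I expect the bookkeeping of Jacobians and the careful statement of "almost every $\kappa$" to be the only genuinely delicate points; everything else reduces to the explicit formula for $V$ and the fact that conjugation by a unitary transports self-adjointness and domains verbatim.
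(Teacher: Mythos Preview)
Your approach for $\bT_a$ is correct and essentially matches the paper's own direct computation in Appendix~\ref{appendixA}: work in the scale domain where dilation acts as translation, read off $\bT_a$ as the multiplication operator $\ln|\w'|-\ln|\w|$ on simple tensors, and extend.

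For $\bT_b$, however, there is a genuine error. You write that ``multiplying $V_{\hf}(\hs)(a,b,c)$ by $b$ is the same as applying $i\frac{\partial}{\partial b}$,'' but this is false: the self-adjoint operators $B=b\cdot$ and $i\frac{\partial}{\partial b}$ are Fourier-dual to one another on $L^2(G)$, not equal. If you literally differentiate $V(F)(a,b,c)=\int e^{2\pi i b\w}e^{a/2}F(ce^a\w,\w)\,d\w$ in $b$, you bring down a factor of $\w$ and compute the pull-back of $i\frac{\partial}{\partial b}$, which is a \emph{multiplication} operator on $\cW\otimes\cS$ --- not the differential operator you are after. The correct move is the dual one: write $b\,e^{2\pi i b\w}$ as (a constant times) $\frac{\partial}{\partial\w}e^{2\pi i b\w}$ and integrate by parts in $\w$; the chain rule applied to $\w\mapsto F(ce^a\w,\w)$ then produces exactly the directional derivative $i\frac{\partial}{\partial\w}+i\frac{\w'}{\w}\frac{\partial}{\partial\w'}$ along the lines $l_\kappa$. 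So your target formula and your interpretation of it as the directional derivative along $l_\kappa$ are right, but the route you wrote down does not reach it.

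For comparison, the paper's Appendix~\ref{appendixA} computes $\bT_b$ by yet another path: it works in the \emph{time} domain, where $b$ appears inside the argument of $f$ via $\pi(a,b,c)f(x)=e^{-a/2}f(ce^{-a}(x-b))$, obtains the algebraic identity $bV_f(s)=V_f(xs)-ce^aV_{xf}(s)$, and then uses integration by parts in $x$ to rewrite the second term as $V_{\frac{\partial}{\partial x}(xf)}(\int s)$. Both the corrected frequency-side argument and the paper's time-side argument are valid; the frequency route is arguably cleaner once the slip above is fixed.

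Your sketch for the domain of $\bT_b$ --- disintegrate the measure $\frac{1}{|\w'|}d\w'd\w$ along the foliation by lines $l_\kappa$ and invoke the one-dimensional characterization of ${\rm Dom}(\bT_x)$ fiberwise --- is the right idea and is essentially what the cited reference \cite{Levie2017} carries out; the paper itself does not reprove this and simply quotes the result.
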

	In the explicit formula of $\bT_b$, note that the lines $l_{\kappa}$ are the integral lines of the translation group $\{e^{i t\bT_b}\}_{t\in\RR}$, and hence for absolutely continuous functions $F$ in every interval along these integral lines, $\big(i\frac{\partial}{\partial\w}+i\frac{\w'}{\w}\frac{\partial}{\partial\w'}\big)F (\w,\w') $ is well defined. 
	
	Next, we consider the pull-back formulas of Proposition \ref{prop: pull-back on simple} for simple functions $F=\hf\otimes \hs$. To be able to derive closed-form formulas that disentangle the computations on $\hs$ from those on $\hf$, we restrict the domain of $\bT_b$.
	
	For the rest of this paper, we follow the following convention when writing operators. By abuse of notation, we write multiplicative operators $Q$ that operate on function $f$ by
	\[Qf(x)=q(x)f(x),\]
	where $q$ is some function, by $q(x)$. We write operators $W$ that operate of function $f$ by
	\[Wf(x)=q(x) \frac{\partial}{\partial x}f(x),\]
	where $q$ is some function, by $q(x)\frac{\partial}{\partial x}$.
	
	\begin{corollary}
	\label{cor_simp_obs}
	Let $\hs\in\cS$ and $\hf\in\cW$.
	\begin{enumerate}
	    \item 
	    Suppose that $\hs$ is in the domain
	    \[{\rm Dom}_{\cS}(\hT_a)= \{\hs\in L^2(\RR)\mid \w\mapsto {\rm ln}(\abs{\w})\hs(\w)\in L^2(\RR)\},\]
	    and
	    $\hf$ is in the domain
	    \[{\rm Dom}_{\cW}(\hT_a)= \{\hf\in L^2(\RR)\mid \w'\mapsto {\rm ln}(\abs{\w'})\hf(\w')\in L^2(\RR;\frac{1}{\abs{\w'}}d\w')\}.\]
	    Then $\hf\otimes\hs\in {\rm Dom}(\bT_a)$, and
	    \begin{equation}
	    \label{simple_scale_obs}
	        \bT_a (\hf\otimes \hs)= -\hf\otimes\Big(\ln(\abs{\w})\hs\Big) + \Big(\ln(\abs{\w'})\hf\Big)\otimes \hs.
	    \end{equation}
	    \item
	    Let ${\rm Dom}_{\cS}(\hT_b)\subset \cS$ be defined as
	    \[{\rm Dom}_{\cS}(\hT_b)= {\rm Dom}(\bT_x) \cap  \{\hs\in\cS\ |\ \w\mapsto \frac{1}{\w}\hs(\w)\in L^2(\RR)\} ,\]
	    where ${\rm Dom}(\bT_x)$ is defined in Proposition \ref{dom_obse_clas} (as the natural domain of $i\frac{\partial}{\partial \w}$).
	    Let ${\rm Dom}_{\cW}(\hT_b)\subset \cW$ denote the domain of all windows $\hf\in\cW$ such that $\hf$  is absolutely continuous in $[\alpha,\beta]$ for every $\beta>\alpha$, and $\w'\mapsto i\w'\hf'(\w')$ is in $L^2(\RR;\frac{1}{\abs{\w'}}d\w')$.
	    If $\hs\in {\rm Dom}_{\cS}(\hT_b)$ and $\hf\in{\rm Dom}_{\cW}(\hT_b)$, then
	    $\hf\otimes\hs\in {\rm Dom}(\bT_b)$, and
	    \begin{equation}
	    \label{simple_time_obs}
        \bT_b(\hf\otimes\hs) = \hf\otimes i\frac{\partial}{\partial\w}\hs - i\w' \frac{\partial}{\partial\w'}\hf\otimes\frac{1}{\w}\hs.
        \end{equation}
	\end{enumerate}
	\end{corollary}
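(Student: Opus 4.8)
The plan is to deduce both items directly from Proposition~\ref{prop: pull-back on simple}: once we know that the simple tensor $\hf\otimes\hs$ lies in the relevant domain, the explicit formula for $\bT_a$, resp.\ $\bT_b$, on $\cW\otimes\cS$ applies, and evaluating it on a simple tensor is a short computation. So each item splits into (i) checking that $\hf\otimes\hs\in{\rm Dom}(\bT_a)$, resp.\ ${\rm Dom}(\bT_b)$, under the stated hypotheses, and (ii) evaluating the formula of Proposition~\ref{prop: pull-back on simple} on $F=\hf\otimes\hs$ and recognizing the result as the claimed combination of simple tensors. The recurring tool is that, by Tonelli, the norm of a simple tensor factorizes as $\norm{\hg\otimes\hs}_{\cW\otimes\cS}=\norm{\hg}_{\cW}\norm{\hs}_{\cS}$, so every $L^2$-membership condition reduces to separate one-dimensional conditions on the window factor and the signal factor.

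For item~1, Proposition~\ref{prop: pull-back on simple} gives $\bT_a F(\w',\w)=\big(\ln\abs{\w'}-\ln\abs{\w}\big)F(\w',\w)$. Substituting $F=\hf\otimes\hs$ and using that $\ln\abs{\w'}$ is real, so $\ln\abs{\w'}\,\overline{\hf(\w')}=\overline{\ln\abs{\w'}\hf(\w')}$, the product splits pointwise as $\big(\ln\abs{\w'}\hf\big)\otimes\hs-\hf\otimes\big(\ln\abs{\w}\hs\big)$, which is (\ref{simple_scale_obs}). For the domain, each summand is a simple tensor with norm $\norm{\ln\abs{\w'}\hf}_{\cW}\norm{\hs}_{\cS}$, resp.\ $\norm{\hf}_{\cW}\norm{\ln\abs{\w}\hs}_{\cS}$, and both are finite because $\hf\in{\rm Dom}_{\cW}(\hT_a)\subseteq\cW$ and $\hs\in{\rm Dom}_{\cS}(\hT_a)\subseteq\cS$. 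By the triangle inequality the difference lies in $\cW\otimes\cS$, so $\hf\otimes\hs\in{\rm Dom}(\bT_a)$ and the formula of Proposition~\ref{prop: pull-back on simple} indeed applies.

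Item~2 needs a little more care, because ${\rm Dom}(\bT_b)$ in Proposition~\ref{prop: pull-back on simple} is described through absolute continuity along the integral lines $l_\kappa$, on which $\bT_b$ acts as the directional derivative. The restriction of $F=\hf\otimes\hs$ to $l_\kappa$ is $\w\mapsto\overline{\hf(\kappa\w)}\hs(\w)$; since $\hf\in{\rm Dom}_{\cW}(\hT_b)$ and $\hs\in{\rm Dom}_{\cS}(\hT_b)\subseteq{\rm Dom}(\bT_x)$ are each absolutely continuous on every compact interval, so is this restriction, and the product rule yields its a.e.\ derivative. Multiplying by $i$ and writing $\w'=\kappa\w$, one checks that the directional derivative equals $i\,\overline{\hf(\w')}\hs'(\w)+i\frac{\w'}{\w}\overline{\hf'(\w')}\hs(\w)$; using $\overline{i\w'\hf'(\w')}=-i\w'\overline{\hf'(\w')}$, this is exactly $\hf\otimes i\frac{\partial}{\partial\w}\hs-\big(i\w'\frac{\partial}{\partial\w'}\hf\big)\otimes\frac{1}{\w}\hs$, i.e.\ (\ref{simple_time_obs}). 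Finally, the two summands are simple tensors with norms $\norm{\hf}_{\cW}\norm{\hs'}_{\cS}$ and $\norm{\w'\hf'}_{\cW}\norm{\frac{1}{\w}\hs}_{\cS}$, finite by the definition of ${\rm Dom}_{\cW}(\hT_b)$ (which requires $\w'\mapsto i\w'\hf'(\w')\in L^2(\RR;\frac{1}{\abs{\w'}}d\w')$) and of ${\rm Dom}_{\cS}(\hT_b)$ (which requires $\hs'\in L^2(\RR)$ and $\frac{1}{\w}\hs\in L^2(\RR)$); the triangle inequality then places the difference in $\cW\otimes\cS$, so $\hf\otimes\hs\in{\rm Dom}(\bT_b)$ and (\ref{simple_time_obs}) holds.

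The only genuinely delicate point is the bookkeeping in item~2: one must keep track of the complex conjugation in the first (window) slot of the tensor product, which is what produces the minus sign and the $i\mapsto-i$ in the second term of (\ref{simple_time_obs}), and one must justify differentiating $\overline{\hf(\kappa\w)}\hs(\w)$ along $l_\kappa$ by the product rule — which is exactly why the hypotheses restrict $\hf$ and $\hs$ to be absolutely continuous on compact intervals rather than merely to lie in $\cW$ and $\cS$. Passing to the restricted domains ${\rm Dom}_{\cW}(\hT_b)$, ${\rm Dom}_{\cS}(\hT_b)$ instead of the full ${\rm Dom}(\bT_b)$ of Proposition~\ref{prop: pull-back on simple} is precisely what makes the action of $\bT_b$ on $\hf\otimes\hs$ disentangle into separate one-dimensional operations on $\hf$ and on $\hs$; a generic element of ${\rm Dom}(\bT_b)$ is not a simple tensor, so no such disentangling is available in general.
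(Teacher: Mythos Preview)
Your proposal is correct and follows the route the corollary's name invites: you take Proposition~\ref{prop: pull-back on simple} as given, specialize its formulas to $F=\hf\otimes\hs$, and verify the domain conditions by factoring the $\cW\otimes\cS$-norm. The bookkeeping with the conjugation in the window slot (which produces the minus sign in (\ref{simple_time_obs})) and the absolute-continuity check along the integral lines $l_\kappa$ are handled correctly.

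The paper's own proof, placed in Appendix~\ref{appendixA}, takes a different path: it bypasses Proposition~\ref{prop: pull-back on simple} entirely and works directly with the wavelet transform integral. For $\bT_a$ it passes to scale space via (\ref{tVfs}) and writes $a\,V_{\tf}(\ts)=V_{\tf}(\sigma\ts)-V_{\sigma\tf}(\ts)$ by inserting $a=\sigma-(\sigma-a)$ under the integral; for $\bT_b$ it works in the time domain, writes $bV_f(s)=V_f(xs)-ce^aV_{xf}(s)$, and then uses integration by parts to identify $ce^aV_{xf}(s)$ with $V_{\partial_x(xf)}(\int s)$, before translating both identities to frequency. Your approach is shorter and conceptually cleaner once Proposition~\ref{prop: pull-back on simple} is in hand; the appendix computation is self-contained and does not invoke the general pull-back machinery from \cite{Levie2017}, so it serves as an independent verification and makes explicit why the particular combinations $\ln|\w'|\hf$, $i\w'\partial_{\w'}\hf$, $\frac{1}{\w}\hs$ arise from the group action.
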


    For future calculations, it is beneficial
    to formulate $\bT_a$ and $\bT_b$ 
    for simple vectors 
    as sums of orthogonal simple vectors.  
    By the inner product formula of simple functions in $\cW\otimes\cS$ (see (\ref{simple ip})), it is enough to make the windows orthogonal. Hence, we reformulate (\ref{simple_scale_obs}) and (\ref{simple_time_obs}) as
	\begin{equation}\label{T_a}
	 \bT_a (\hf\otimes \hs)= \hf\otimes \Big(-\ln(\abs{\w}) +e^{\cW}_{\hf}\big(\ln(\abs{\w'})\big) \Big)\hs + \Big(\ln(\abs{\w'}) - e^{\cW}_{\hf}\big(\ln(\abs{\w'})\big)\Big)\hf\otimes \hs,  
	\end{equation}
    \begin{equation}\label{bT_b}
    \bT_b (\hf\otimes \hs) = \hf\otimes \big( i\frac{\partial}{\partial\w}-e^{\cW}_{\hf}(i\w'\frac{\partial}{\partial\w'})\frac{1}{\w} \big)\hs  +\big(  -i\w' \frac{\partial}{\partial\w'}+e^{\cW}_{\hf}(i\w'\frac{\partial}{\partial\w'}) \big)\hf \otimes \big( \frac{1}{\w}\hs\big).    
    \end{equation}
    Here, recall that $e^{\cW}_{\hf}(T)=\ip{T\hf}{\hf}_{\cW}$ denotes the expected value of $\hf$ with respect to an observable $T$ in the space $\cW$.

	\subsection{The Pull-Back of Scale Localization Measures}
	\label{scale localization}
	In this subsection we formulate $e_{\hf\otimes \hs}(\bT_{a})$ and $v_{\hf\otimes \hs}(\bT_{a})$ as a combination of signal and window expected values and variances.
	\begin{proposition}
	\label{prop:pull_scale}
	Let $\hs\in {\rm Dom}_{\cS}(\hT_a)$ and $\hf\in{\rm Dom}_{\cW}(\hT_a)$ (see Proposition \ref{cor_simp_obs})
	such that  $\norm{\hf}_{\cW}=\norm{\hs}_{\cS}=1$. Then the following holds.
	\begin{enumerate}
	    \item The expected value of $\hf\otimes \hs$ with respect to $\bT_a$ is 
	\[e_{\hf\otimes \hs}(\bT_{a})
    	 = -e^{\cS}_{\hs}\big(\ln(\abs{\w})\big) + e^{\cW}_{\hf}\big(\ln(\abs{\w'})\big) .\]
	    \item The variance of $\hf\otimes \hs$ with respect to $\bT_a$  is
	    \[v_{\hf\otimes\hs}(\bT_{a}) 
	   \quad=v^{\cS}_{\hs}\big(\ln(\abs{\w})\big) + v_{\hf}^{\cW}\big(\ln(\abs{\w'})\big).\]
	\end{enumerate}
	\end{proposition}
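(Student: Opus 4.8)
The plan is to reduce both statements to the simple-vector formula for $\bT_a$ already established in Corollary \ref{cor_simp_obs}, and then to use the fact that inner products and norms of simple functions factor through the window and signal inner products via (\ref{simple ip}).

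First I would handle the expected value. By (\ref{simple_scale_obs}), $\bT_a(\hf\otimes\hs) = -\hf\otimes\big(\ln(\abs{\w})\hs\big) + \big(\ln(\abs{\w'})\hf\big)\otimes\hs$, so by bilinearity and (\ref{simple ip}),
\[
e_{\hf\otimes\hs}(\bT_a) = \ip{\bT_a(\hf\otimes\hs)}{\hf\otimes\hs} = -\overline{\ip{\hf}{\hf}}_{\cW}\,\ip{\ln(\abs{\w})\hs}{\hs}_{\cS} + \overline{\ip{\ln(\abs{\w'})\hf}{\hf}}_{\cW}\,\ip{\hs}{\hs}_{\cS}.
\]
Since $\norm{\hf}_{\cW}=\norm{\hs}_{\cS}=1$ and multiplication by the real function $\ln(\abs{\cdot})$ is self-adjoint in both $\cW$ and $\cS$ (so the two expectations are real and the conjugations are harmless), this collapses to $-e^{\cS}_{\hs}(\ln(\abs{\w})) + e^{\cW}_{\hf}(\ln(\abs{\w'}))$, which is claim (1).

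For the variance I would work from the orthogonalized form (\ref{T_a}) rather than (\ref{simple_scale_obs}). Subtracting $e_{\hf\otimes\hs}(\bT_a)\,(\hf\otimes\hs)$ and absorbing the scalar into the first summand yields
\[
\big(\bT_a - e_{\hf\otimes\hs}(\bT_a)\big)(\hf\otimes\hs) = \hf\otimes\Big(\big(e^{\cS}_{\hs}(\ln(\abs{\w}))-\ln(\abs{\w})\big)\hs\Big) + \Big(\big(\ln(\abs{\w'})-e^{\cW}_{\hf}(\ln(\abs{\w'}))\big)\hf\Big)\otimes\hs.
\]
The crucial observation is that the two window factors, $\hf$ and $\big(\ln(\abs{\w'})-e^{\cW}_{\hf}(\ln(\abs{\w'}))\big)\hf$, are orthogonal in $\cW$ — this is precisely the centering identity $\ip{(\ln(\abs{\w'})-e^{\cW}_{\hf}(\ln(\abs{\w'})))\hf}{\hf}_{\cW}=0$ — so by (\ref{simple ip}) the two simple functions above are orthogonal in $\cW\otimes\cS$. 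The Pythagorean theorem together with the factorization $\norm{\hg\otimes\hh}^2 = \norm{\hg}_{\cW}^2\norm{\hh}_{\cS}^2$ then gives
\[
v_{\hf\otimes\hs}(\bT_a) = \norm{\hf}_{\cW}^2\,\norm{\big(\ln(\abs{\w})-e^{\cS}_{\hs}(\ln(\abs{\w}))\big)\hs}_{\cS}^2 + \norm{\big(\ln(\abs{\w'})-e^{\cW}_{\hf}(\ln(\abs{\w'}))\big)\hf}_{\cW}^2\,\norm{\hs}_{\cS}^2,
\]
and invoking $\norm{\hf}_{\cW}=\norm{\hs}_{\cS}=1$ with the definition of variance (\ref{var}) identifies the two terms as $v^{\cS}_{\hs}(\ln(\abs{\w}))$ and $v^{\cW}_{\hf}(\ln(\abs{\w'}))$, which is claim (2).

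There is no deep obstacle here — once (\ref{T_a}) is in hand the argument is essentially bookkeeping. The points that require care are: (i) confirming that each simple function in the decompositions lies in $\cW\otimes\cS$, which amounts to $\ln(\abs{\w})\hs\in\cS$ and $\ln(\abs{\w'})\hf\in\cW$ (exactly the hypotheses $\hs\in{\rm Dom}_{\cS}(\hT_a)$ and $\hf\in{\rm Dom}_{\cW}(\hT_a)$), together with $\hf\otimes\hs\in{\rm Dom}(\bT_a)$ supplied by Corollary \ref{cor_simp_obs}; (ii) checking that the relevant expectations of real multiplicative operators are real, so the conjugations in (\ref{simple ip}) drop out; and (iii) performing the orthogonal splitting for the variance on the \emph{centered} operator $\bT_a - e_{\hf\otimes\hs}(\bT_a)$ rather than on $\bT_a$ itself, which is what makes the cross term vanish and the Pythagorean step legitimate.
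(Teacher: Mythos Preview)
Your proposal is correct and follows essentially the same approach as the paper. The only cosmetic difference is that for part (1) you compute directly from the non-orthogonalized formula (\ref{simple_scale_obs}) while the paper starts from the orthogonalized form (\ref{T_a}); both routes collapse to the same two-term expression after using $\norm{\hf}_{\cW}=\norm{\hs}_{\cS}=1$, and for part (2) your argument---center, observe window orthogonality, apply Pythagoras via (\ref{simple ip})---is identical to the paper's.
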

	
	\begin{proof}
	\begin{enumerate}
	    \item
	    Calculate
	\begin{equation}\label{mean A}
	\begin{split}
    	&e_{\hf\otimes \hs}(\bT_{a})\\
    	&\quad = \ip{\bT_{a}(\hf\otimes \hs)}{\hf\otimes \hs}_{\cW\otimes\cS}\\
    	&\quad= \ip{\hf\otimes \Big(-\ln(\abs{\w}) +e^{\cW}_{\hf}\big(\ln(\abs{\w'})\big) \Big)\hs }{\hf\otimes\hs}_{\cW\otimes\cS}\\
    	& \quad\quad +\ip{ \Big(\ln(\abs{\w'}) - e^{\cW}_{\hf}\big(\ln(\abs{\w'})\big)\Big)\hf\otimes \hs}{\hf\otimes\hs}_{\cW\otimes\cS}\\
    	&\quad = \ip{\hf}{\hf}_{\cW}\ip{\Big(-\ln(\abs{\w}) +e^{\cW}_{\hf}\big(\ln(\abs{\w'})\big) \Big)\hs}{\hs}_{\cS}
    	\\
    	& \quad\quad+\ip{\Big(\ln(\abs{\w'}) - e^{\cW}_{\hf}\big(\ln(\abs{\w'})\big)\Big)\hf}{\hf}_{\cW}\ip{\hs}{\hs}_{\cS}\\
    	&\quad = -e^{\cS}_{\hs}\big(\ln(\abs{\w})\big) + e^{\cW}_{\hf}\big(\ln(\abs{\w'})\big) .
	\end{split}
	\end{equation}
	
	Note that the third and fourth equalities in (\ref{mean A}) follows from linearity, the definition (\ref{simple ip}) of inner product of simple vectors in $\cW\otimes\cS$, and the fact that 
	$\ip{\ln(\w)\hf + e^{\cW}_{\hf}(-\ln(\w))\hf}{\hf}_{\cW}=0.$
	    \item We start by computing 
	\[
	\begin{split}
	\big(\bT_{a} - e_{\hf\otimes \hs}(\bT_{a})\big) (\hf\otimes\hs)
	&  =
	\hf\otimes \Big(-\ln(\abs{\w}) +e^{\cS}_{\hs}\big(\ln(\abs{\w})\big) \Big)\hs\\ 
	&\quad + \Big(\ln(\abs{\w'})  -e^{\cW}_{\hf}\big(\ln(\abs{\w'})\big)\Big)\hf\otimes \hs.
	\end{split}
	\]
	Hence, the variance is 
	\begin{equation}\label{var A}
	\begin{split}
	v_{\hf\otimes\hs}(\bT_{a}) 
	   &\quad= \norm{(\bT_{a} - e_{\hf\otimes \hs}(\bT_{a}))(\hf\otimes\hs)}^2_{\cW\otimes\cS}\\
	   &\quad = \norm{\hf}_{\cW}^2\norm{\Big(-\ln(\abs{\w}) +e^{\cS}_{\hs}\big(\ln(\abs{\w})\big) \Big)\hs}_{\cS}^2\\  
	   &\quad+\norm{\Big(\ln(\abs{\w'})  -e^{\cW}_{\hf}\big(\ln(\abs{\w'})\big)\Big)\hf}_{\cW}^2\norm{\hs}_{\cS}^2\\
	   &\quad = v^{\cS}_{\hs}\big(\ln(\abs{\w})\big) + v_{\hf}^{\cW}\big(\ln(\abs{\w'})\big),
	   \end{split} 
	\end{equation}	
	where the second equality in (\ref{var A}) follows from linearity, the inner product formula of simple functions in $\cW\otimes\cS$ (\ref{simple ip}), and by the orthogonality of the two windows $\hf$ and $\Big(\ln(\w') - e^{\cW}_{\hf}(\ln(\w'))\Big)\hf$.  %
	\end{enumerate}
	\end{proof}
	
	\subsection{The Pull-Back of Time Localization measures}
	\label{time localization}
	In this subsection we formulate $e_{\hf\otimes \hs}(\bT_{b})$ and $v_{\hf\otimes \hs}(\bT_{b})$ as a combination of signal and window expected values and variances.
	 
	\begin{proposition}
	\label{prop:pull_time}
	Let $\hs\in {\rm Dom}_{\cS}(\hT_b)$ and $\hf\in{\rm Dom}_{\cW}(\hT_b)$ (see Proposition \ref{cor_simp_obs})
	such that  $\norm{\hf}_{\cW}=\norm{\hs}_{\cS}=1$. Then the following holds.
	\begin{enumerate}
	    \item  The expected value of $\hf\otimes \hs$ with respect to $\bT_b$ is
	    \begin{equation}\label{mean B} 
	    e_{\hf\otimes \hs}(\bT_{b})
    	 =e^{\cS}_{\hs}(i\frac{\partial}{\partial\w})  - e^{\cW}_{\hf}(i\w'\frac{\partial}{\partial\w'})e^{\cS}_{\hs}(\frac{1}{\w}).
	\end{equation}
	    \item 
	    The variance of $\hf\otimes \hs$ with respect to $\bT_b$ is 
	    \begin{equation}
	     v_{\hf\otimes\hs}(\bT_{b})
	   =v^{\cS}_{\hs}(i\frac{\partial}{\partial\w}-e^{\cW}_{\hf}(i\w'\frac{\partial}{\partial\w'})\frac{1}{\w}) +v_{\hf}^{\cW}(i\w'\frac{\partial}{\partial\w'})\norm{\frac{1}{\w}\hs}_{\cS}^2.  
	    \end{equation}
	\end{enumerate}
	\end{proposition}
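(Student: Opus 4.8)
The plan is to follow the proof of Proposition~\ref{prop:pull_scale} step for step; the only genuinely new feature is the coupling term $\tfrac1\w\hs$ in~(\ref{bT_b}). The starting point is formula~(\ref{bT_b}), which already writes $\bT_b(\hf\otimes\hs)$ as a sum of two \emph{simple} tensors,
\[
\bT_b(\hf\otimes\hs)=\hf\otimes\bigl(i\tfrac{\partial}{\partial\w}-e^{\cW}_{\hf}(i\w'\tfrac{\partial}{\partial\w'})\tfrac1\w\bigr)\hs+g\otimes\bigl(\tfrac1\w\hs\bigr),
\]
where I write $g:=\bigl(-i\w'\tfrac{\partial}{\partial\w'}+e^{\cW}_{\hf}(i\w'\tfrac{\partial}{\partial\w'})\bigr)\hf$ for the centered second window. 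Since $\norm{\hf}_{\cW}=1$, one has $\ip{g}{\hf}_{\cW}=-e^{\cW}_{\hf}(i\w'\tfrac{\partial}{\partial\w'})+e^{\cW}_{\hf}(i\w'\tfrac{\partial}{\partial\w'})=0$, so $g$ and $\hf$ are orthogonal in $\cW$; by Proposition~\ref{cor_simp_obs}(2) and the hypotheses $\hs\in{\rm Dom}_{\cS}(\hT_b)$, $\hf\in{\rm Dom}_{\cW}(\hT_b)$, the scalar $e^{\cW}_{\hf}(i\w'\tfrac{\partial}{\partial\w'})$ is finite and real, $\tfrac1\w\hs$ and $i\hs'$ lie in $\cS$, and $g$ lies in $\cW$, so that $\hf\otimes\hs\in{\rm Dom}(\bT_b)$ and each simple tensor above is a legitimate element of $\cW\otimes\cS$.

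For part~1, I would compute $e_{\hf\otimes\hs}(\bT_b)=\ip{\bT_b(\hf\otimes\hs)}{\hf\otimes\hs}_{\cW\otimes\cS}$ by pairing the displayed sum with $\hf\otimes\hs$ term by term through the simple-tensor inner-product rule~(\ref{simple ip}). The $g$-term contributes $\overline{\ip{g}{\hf}}_{\cW}\ip{\tfrac1\w\hs}{\hs}_{\cS}=0$, while the $\hf$-term contributes $\ip{\hf}{\hf}_{\cW}\ip{\bigl(i\tfrac{\partial}{\partial\w}-e^{\cW}_{\hf}(i\w'\tfrac{\partial}{\partial\w'})\tfrac1\w\bigr)\hs}{\hs}_{\cS}$, which by $\norm{\hf}_{\cW}=1$ and linearity of the expectation in the observable equals $e^{\cS}_{\hs}(i\tfrac{\partial}{\partial\w})-e^{\cW}_{\hf}(i\w'\tfrac{\partial}{\partial\w'})\,e^{\cS}_{\hs}(\tfrac1\w)$, which is~(\ref{mean B}).

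For part~2, I would subtract the scalar $e_{\hf\otimes\hs}(\bT_b)$ into the signal factor of the $\hf$-term, which leaves the two window components $\hf$ and $g$ unchanged and hence still orthogonal. Then $\hf\otimes(\cdots)$ and $g\otimes(\tfrac1\w\hs)$ are orthogonal in $\cW\otimes\cS$, and the Pythagorean identity together with~(\ref{simple ip}) gives
\[
v_{\hf\otimes\hs}(\bT_b)=\norm{\hf}_{\cW}^2\,\norm{\bigl(i\tfrac{\partial}{\partial\w}-e^{\cW}_{\hf}(i\w'\tfrac{\partial}{\partial\w'})\tfrac1\w-e_{\hf\otimes\hs}(\bT_b)\bigr)\hs}_{\cS}^2+\norm{g}_{\cW}^2\,\norm{\tfrac1\w\hs}_{\cS}^2.
\]
Here $\norm{g}_{\cW}^2=v^{\cW}_{\hf}(i\w'\tfrac{\partial}{\partial\w'})$ by the definition of the variance; and — the one bookkeeping point — part~1 says precisely that $e_{\hf\otimes\hs}(\bT_b)$ is the $\cS$-expectation of the operator $i\tfrac{\partial}{\partial\w}-e^{\cW}_{\hf}(i\w'\tfrac{\partial}{\partial\w'})\tfrac1\w$ with respect to $\hs$, so the first norm is exactly $v^{\cS}_{\hs}\bigl(i\tfrac{\partial}{\partial\w}-e^{\cW}_{\hf}(i\w'\tfrac{\partial}{\partial\w'})\tfrac1\w\bigr)$. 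With $\norm{\hf}_{\cW}=1$, this is the asserted identity.

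The manipulations themselves (linearity, applying~(\ref{simple ip}), Pythagoras) are routine. The part I expect to need the most care is the functional-analytic bookkeeping: confirming that~(\ref{bT_b}) is valid on the \emph{restricted} domains ${\rm Dom}_{\cS}(\hT_b)$, ${\rm Dom}_{\cW}(\hT_b)$ — in particular that the integration by parts used to pass from Proposition~\ref{prop: pull-back on simple} to~(\ref{bT_b}) produces no boundary contribution at $\w'=0$ or $\w'=\pm\infty$ despite the weight $\tfrac1{|\w'|}$ defining $\cW$ — and checking that $\bigl(i\tfrac{\partial}{\partial\w}-e^{\cW}_{\hf}(i\w'\tfrac{\partial}{\partial\w'})\tfrac1\w\bigr)\hs$ really lies in $\cS$ and $g$ in $\cW$, so that every simple tensor above genuinely belongs to $\cW\otimes\cS$ and~(\ref{simple ip}) applies to it.
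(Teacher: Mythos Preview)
Your proposal is correct and follows essentially the same route as the paper: start from the orthogonal decomposition~(\ref{bT_b}), exploit $\ip{g}{\hf}_{\cW}=0$ together with the simple-tensor inner-product formula~(\ref{simple ip}) to read off the expectation, then absorb the scalar $e_{\hf\otimes\hs}(\bT_b)$ into the $\hf$-component and apply Pythagoras for the variance. The domain concerns you raise at the end are already handled by Corollary~\ref{cor_simp_obs}(2), so no extra integration-by-parts argument is needed there.
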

	\begin{proof}
	\begin{enumerate}
	    \item Compute 
	    \begin{equation}
    \begin{split}
	    &e_{\hf\otimes \hs}(\bT_b)\\ 
	    &\quad=\ip{\hf\otimes \big( i\frac{\partial}{\partial\w}\hs-e^{\cW}_{\hf}(i\w'\frac{\partial}{\partial\w'})\frac{1}{\w}\hs \big)}{\hf\otimes \hs}\\  
	    &\quad\quad + \ip{
	    \Big(  -i\w' \frac{\partial}{\partial\w'}\hf
	    +e^{\cW}_{\hf}(i\w'\frac{\partial}{\partial\w'})\hf \Big) \otimes \big( \frac{1}{\w}\hs\big)}{\hf\otimes \hs}\\ 
	   &\quad= e^{\cS}_{\hs}(i\frac{\partial}{\partial\w}) \ -\ e^{\cW}_{\hf}(i\w'\frac{\partial}{\partial\w'})e^{\cS}_{\hs}(\frac{1}{\w})
	\end{split}
\end{equation}
	    \item
	    Compute  
	\[
	\begin{split}
	 & (\bT_b - e_{\hf\otimes \hs}(\bT_b))(\hf\otimes\hs)
	\\ & = 
	\hf\otimes \Big( i\frac{\partial}{\partial\w}\hs - e^{\cW}_{\hf}(i\w'\frac{\partial}{\partial\w'})\frac{1}{\w}\hs \Big)\\
	&\quad + \Big(  -i\w' \frac{\partial}{\partial\w'}\hf + e^{\cW}_{\hf}(i\w'\frac{\partial}{\partial\w'})\hf \Big) \otimes  \frac{1}{\w}\hs(\w)\\
	&\quad -\Big( e^{\cS}_{\hs}(i\frac{\partial}{\partial\w})  -  e^{\cW}_{\hf}(i\w'\frac{\partial}{\partial\w'}) e^{\cS}_{\hs}(\frac{1}{\w})\Big) (\hf\otimes\hs)\\
	& =
	\hf\otimes \Big(i\frac{\partial}{\partial\w}\hs - e^{\cW}_{\hf}(i\w'\frac{\partial}{\partial\w'})\frac{1}{\w}\hs  - \big( e^{\cS}_{\hs}(i\frac{\partial}{\partial\w})  -  e^{\cW}_{\hf}(i\w'\frac{\partial}{\partial\w'}) e^{\cS}_{\hs}(\frac{1}{\w})\big)\hs\Big)\\
	&\quad + \Big(  -i\w' \frac{\partial}{\partial\w'}\hf + e^{\cW}_{\hf}(i\w'\frac{\partial}{\partial\w'})\hf \Big) \otimes \Big( \frac{1}{\w}\hs\Big).
	\end{split}
	\]
	Therefore, 
	\begin{equation}\label{var B}
	 \begin{split}
	    v_{\hf\otimes\hs}(\bT_b) 
	    &\quad = \norm{(\bT_b - e_{\hf\otimes \hs}(\bT_b))(\hf\otimes\hs)}_{\cW\otimes\cS}^2\\
	    &\quad =v^{\cS}_{\hs}(i\frac{\partial}{\partial\w}-e^{\cW}_{\hf}(i\w'\frac{\partial}{\partial\w'})\frac{1}{\w}) +v_{\hf}^{\cW}(i\w'\frac{\partial}{\partial\w'})\norm{\frac{1}{\w}\hs}_{\cS}^2.
	\end{split}   
	\end{equation} 
	 
	\end{enumerate}
	\end{proof}
	
	\subsection{A Pull-Back Formula for Phase Space Uncertainty}

We are now ready to write the phase space uncertainty $\mathcal{L}(\hf)$ of Definition \ref{def:phase uncertainty} in terms of 1D localization measures of $\hf$ via the pull-back formulas. To be able to write explicit formulas based on Propositions \ref{prop:pull_scale} and \ref{prop:pull_time}, we restrict the domain of $\cL$ to
\begin{equation}
\label{Dom_unc_f}
  {\rm Dom}_{\cW\cap\cS}(\cL):={\rm Dom}_{\cS}(\bT_a)\cap{\rm Dom}_{\cW}(\bT_a)\cap{\rm Dom}_{\cS}(\bT_b)\cap{\rm Dom}_{\cW}(\bT_b)\cap\{\hf\in\cW\ |\ \hf\neq 0\}.  
\end{equation}

\begin{proposition}
The domain ${\rm Dom}_{\cW\cap\cS}(\cL)$ is the set of all $0\neq\hf\in L^2(\RR)$ such that 
\begin{enumerate}
\item
$\hf$ is absolutely continuous in every compact interval, and
\[i\frac{\partial}{\partial\w}\hf\in L^2(\RR) {\rm \quad and} \quad  i\sqrt{\abs{\w}}\frac{\partial}{\partial\w}\hf\in L^2(\RR).\]
\item
$\w\mapsto \frac{1}{\w}\hf(\w)\in L^2(\RR)$
\item
$\w\mapsto \ln(\abs{\w})\hf(\w)\in L^2(\RR)$.
\end{enumerate}
\end{proposition}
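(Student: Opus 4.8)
The plan is to unwind the definition of ${\rm Dom}_{\cW\cap\cS}(\cL)$ in (\ref{Dom_unc_f}) as the intersection of the four one-variable domains appearing in Corollary \ref{cor_simp_obs}, and to show that each of the five conditions being intersected there either coincides with one of the three listed conditions, or is implied by them (with the redundancies coming from the elementary inequality $\abs{\ln\abs{\w}}\le C_\e(\abs{\w}^\e+\abs{\w}^{-\e})$ valid for every $\e>0$). First I would write out, one by one: ${\rm Dom}_{\cS}(\bT_a)=\{\w\mapsto\ln\abs{\w}\,\hf(\w)\in L^2(\RR)\}$ is exactly condition (3); ${\rm Dom}_{\cW}(\bT_a)=\{\w\mapsto\ln\abs{\w}\,\hf(\w)\in L^2(\RR;\tfrac1{\abs{\w}}d\w)\}$; ${\rm Dom}_{\cS}(\bT_b)={\rm Dom}(\bT_x)\cap\{\w\mapsto\tfrac1\w\hf(\w)\in L^2(\RR)\}$, whose first factor is the absolute-continuity-plus-$i\hf'\in L^2$ part of (1) and whose second factor is (2); and ${\rm Dom}_{\cW}(\bT_b)$ is the set of $\hf$ absolutely continuous on compacts with $\w\mapsto i\w\hf'(\w)\in L^2(\RR;\tfrac1{\abs{\w}}d\w)$, i.e.\ $i\sqrt{\abs{\w}}\hf'\in L^2(\RR)$, which is the second half of (1).

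So the only genuine content is the claim that once $\hf\in L^2(\RR)$ satisfies (1), (2), (3), the membership in ${\rm Dom}_{\cW}(\bT_a)$ — namely $\int \abs{\ln\abs{\w}}^2\abs{\hf(\w)}^2\tfrac1{\abs{\w}}d\w<\infty$ — is automatic, hence does not need to be listed separately. I would split the integral at $\abs{\w}=1$. On $\{\abs{\w}\ge1\}$ we have $\tfrac1{\abs{\w}}\le1$, so $\int_{\abs{\w}\ge1}\abs{\ln\abs{\w}}^2\abs{\hf}^2\tfrac1{\abs{\w}}\le\int_{\abs{\w}\ge1}\abs{\ln\abs{\w}}^2\abs{\hf}^2\le C\int_{\abs{\w}\ge1}(\abs{\ln\abs{\w}}^2\abs{\w}^{-1})\abs{\w\hf}^2$; here $\abs{\ln\abs{\w}}^2\abs{\w}^{-1}$ is bounded on $[1,\infty)$, and $\w\hf\in L^2(\{\abs{\w}\ge1\})$ follows from $\tfrac1\w\hf\in L^2(\RR)$ being equivalent (on the complement of a neighbourhood of $0$) to control by $\hf\in L^2$ — wait, more carefully: on $\{\abs{\w}\ge1\}$ condition (2) is automatic and gives nothing, so instead I use $\hf\in L^2$ directly together with $\abs{\ln\abs{\w}}^2\abs{\w}^{-1}\le C$ on $[1,\infty)$, giving $\int_{\abs{\w}\ge1}\abs{\ln\abs{\w}}^2\abs{\hf}^2\abs{\w}^{-1}\le C\norm{\hf}_{L^2}^2$. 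On $\{0<\abs{\w}\le1\}$ I bound $\abs{\ln\abs{\w}}^2\tfrac1{\abs{\w}}\le C_\e\abs{\w}^{-1-2\e}$ for small $\e$, and then $\abs{\w}^{-1-2\e}\le\abs{\w}^{-2}$ for $\abs{\w}\le1$ and $\e\le1/2$, so $\int_{0<\abs{\w}\le1}\abs{\ln\abs{\w}}^2\abs{\hf}^2\tfrac1{\abs{\w}}\le C_\e\int_{0<\abs{\w}\le1}\abs{\tfrac1\w\hf}^2<\infty$ by condition (2). Adding the two pieces shows $\hf\in{\rm Dom}_{\cW}(\bT_a)$.

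Symmetrically, I should check there are no other hidden redundancies or gaps: the absolute-continuity requirements in ${\rm Dom}_{\cS}(\bT_b)$ and ${\rm Dom}_{\cW}(\bT_b)$ are literally the same condition ``$\hf$ absolutely continuous on every compact interval,'' so they collapse to the single statement in (1); and the two derivative conditions $i\hf'\in L^2(\RR)$ and $i\sqrt{\abs{\w}}\hf'\in L^2(\RR)$ are genuinely independent near $\w=0$ and near $\w=\infty$ respectively, so both must be kept, matching the two displayed conditions in (1). Finally I note $\hf\neq0$ is carried verbatim from the last factor in (\ref{Dom_unc_f}). The proof concludes by asserting the reverse inclusion, which is immediate: each of (1), (2), (3) is one of (or is contained in one of) the defining conditions of the four domains, so their conjunction lies in the intersection.

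The main obstacle is simply making sure the logarithmic weight estimate is stated cleanly enough to be convincing without belaboring it — i.e.\ isolating the two cases $\abs{\w}\lessgtr1$ and invoking $\abs{\ln t}=o(t^{\pm\e})$ as $t\to0^+$ and $t\to\infty$ — and being careful that it is precisely the $\cW$-version of the scale-observable domain (not the $\cS$-version) that turns out to be redundant, since the $\cS$-version is exactly condition (3) and must be retained.
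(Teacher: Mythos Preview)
Your approach is essentially the same as the paper's — the paper's proof is the two-sentence assertion that the constraints in (\ref{Dom_unc_f}) are dominated by (1)--(3) and conversely, with no further detail — so your proposal is really a fleshing-out of what the paper leaves implicit. Your identification of ${\rm Dom}_{\cW}(\bT_a)$ as the single redundant constraint, and your $\abs{\w}\lessgtr1$ split using $\abs{\ln t}^2/t$ bounded on $[1,\infty)$ and $\abs{\ln t}^2\le C_\e t^{-2\e}$ near $0$, is correct and is the natural argument.

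One small omission: the fifth factor in (\ref{Dom_unc_f}) is $\{\hf\in\cW\mid\hf\neq0\}$, so you must also verify that (1)--(3) together with $\hf\in L^2(\RR)$ force the admissibility condition $\hf\in\cW$, i.e.\ $\int\abs{\hf(\w)}^2\tfrac{1}{\abs{\w}}\,d\w<\infty$. This is immediate from the same split — on $\abs{\w}\ge1$ use $\tfrac1{\abs{\w}}\le1$ and $\hf\in L^2$, on $\abs{\w}\le1$ use $\tfrac1{\abs{\w}}\le\tfrac1{\w^2}$ and condition (2) — but you should state it, since otherwise the last intersected set (and the ambient space for ${\rm Dom}_{\cW}(\bT_b)$) is not accounted for.
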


\begin{proof}
All of the constraints in ${\rm Dom}_{\cW\cap\cS}(\cL)$ are dominated by the requirements (1)--(3). On the other hand, (1)--(3) follow the constraints in ${\rm Dom}_{\cW\cap\cS}(\cL)$.
\end{proof}

\begin{proposition}\label{cor cL}
Let $\hf\in {\rm Dom}_{\cW\cap\cS}(\cL)$ satisfy  $e^{\cS}_{\hf}\big(\ln(\abs{\w})\big) = e^{\cS}_{\hf}\big(\Tx\big)= 0$. Then,  $\hf\in{\rm Dom} (\cL)$ and
the phase space uncertainty $\cL(\hf)$ of $\hf$ is 
\begin{equation}\label{cL}
\begin {split}
\cL(\hf) 
&= v^{\cS}_{\frac{\hf}{\norm{\hf}_{\cS}}}\big(\ln(\abs{\w})\big) + v_{\frac{\hf}{\norm{\hf}_{\cW}}}^{\cW}\big(\ln(\abs{\w'})\big)\\
&\quad+v^{\cS}_{\frac{\hf}{\norm{\hf}_{\cS}}}\big(i\frac{\partial}{\partial\w} 
\big) 
+ v^{\cW}_{\frac{\hf}{\norm{\hf}_{\cW}}}\big(i\w'\frac{\partial}{\partial\w'}\big)\frac{\norm{\frac{1}{w}\hf}_{\cS}^2}{\norm{\hf}^2_{\cS}}.
\end {split}
\end{equation}

\end{proposition}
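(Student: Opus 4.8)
The plan is to transport the two phase-space variances $v_{K_{\hf}}(A)$ and $v_{K_{\hf}}(B)$ to the window--signal space via the wavelet-Plancherel theorem, invoke the pull-back formulas of Propositions~\ref{prop:pull_scale} and~\ref{prop:pull_time}, and then simplify the single expected-value term that survives. First I would observe that, by the definition of $V$ on simple tensors, the normalized ambiguity function is
\[
K_{\hf}=V_{\hf/\norm{\hf}_{\cW}}\big(\hf/\norm{\hf}_{\cS}\big)=V\Big(\tfrac{\hf}{\norm{\hf}_{\cW}}\otimes\tfrac{\hf}{\norm{\hf}_{\cS}}\Big),
\]
and that $F:=\tfrac{\hf}{\norm{\hf}_{\cW}}\otimes\tfrac{\hf}{\norm{\hf}_{\cS}}$ has unit norm in $\cW\otimes\cS$ by~(\ref{simple ip}). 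Since $\hf\in{\rm Dom}_{\cW\cap\cS}(\cL)$ lies in each of ${\rm Dom}_{\cS}(\hT_a),{\rm Dom}_{\cW}(\hT_a),{\rm Dom}_{\cS}(\hT_b),{\rm Dom}_{\cW}(\hT_b)$ by~(\ref{Dom_unc_f}), and these are linear subspaces (so scalar multiples of $\hf$ still lie in them), Corollary~\ref{cor_simp_obs} gives $F\in{\rm Dom}(\bT_a)\cap{\rm Dom}(\bT_b)$, whence $K_{\hf}=VF\in{\rm Dom}(A)\cap{\rm Dom}(B)$ by the definition of the pull-backs. Together with $\hf\neq0$ and $\hf\in\cW\cap\cS$ (admissibility of $\hf$ follows from $\hf\in L^2(\RR)$ and the condition $\w\mapsto\w^{-1}\hf(\w)\in L^2(\RR)$), this yields $\hf\in{\rm Dom}(\cL)$.

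Next I would record the elementary fact that, because $V$ is an isometric isomorphism (Theorem~\ref{wavelet-Plancharel}) with $\bT_a=V^*AV$ and $\bT_b=V^*BV$, one has $e_{VF}(A)=e_F(\bT_a)$ and $v_{VF}(A)=v_F(\bT_a)$ for $F$ in the relevant domain (and likewise for $B,\bT_b$); this is immediate from $VV^*={\rm Id}$ on $L^2(G)$, the definition of ${\rm Dom}(\bT_a)$, and the isometry of $V$. Applied to the $F$ above, this reduces $\cL(\hf)=v_{K_{\hf}}(A)+v_{K_{\hf}}(B)$ to $v_F(\bT_a)+v_F(\bT_b)$. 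Proposition~\ref{prop:pull_scale}(2), with window $\hf/\norm{\hf}_{\cW}$ (unit norm in $\cW$) and signal $\hf/\norm{\hf}_{\cS}$ (unit norm in $\cS$), gives $v_F(\bT_a)=v^{\cS}_{\hf/\norm{\hf}_{\cS}}\big(\ln(\abs{\w})\big)+v^{\cW}_{\hf/\norm{\hf}_{\cW}}\big(\ln(\abs{\w'})\big)$, the first two summands of~(\ref{cL}); and Proposition~\ref{prop:pull_time}(2), with the same window and signal, gives
\[
v_F(\bT_b)=v^{\cS}_{\hf/\norm{\hf}_{\cS}}\Big(\Tx-e^{\cW}_{\hf/\norm{\hf}_{\cW}}\big(i\w'\tfrac{\partial}{\partial\w'}\big)\tfrac{1}{\w}\Big)+v^{\cW}_{\hf/\norm{\hf}_{\cW}}\big(i\w'\tfrac{\partial}{\partial\w'}\big)\,\Big\|\tfrac{1}{\w}\,\tfrac{\hf}{\norm{\hf}_{\cS}}\Big\|_{\cS}^{2},
\]
whose second term already equals $v^{\cW}_{\hf/\norm{\hf}_{\cW}}\big(i\w'\tfrac{\partial}{\partial\w'}\big)\,\norm{\tfrac{1}{\w}\hf}_{\cS}^{2}/\norm{\hf}_{\cS}^{2}$, the last summand of~(\ref{cL}).

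The only step that is not pure bookkeeping --- and the one I expect to be the main obstacle --- is to show that the surviving constant $e^{\cW}_{\hf/\norm{\hf}_{\cW}}\big(i\w'\tfrac{\partial}{\partial\w'}\big)$ vanishes, which collapses the first term of $v_F(\bT_b)$ to $v^{\cS}_{\hf/\norm{\hf}_{\cS}}(\Tx)$. I would use that elements of $\cW$ are real-valued (Definition~\ref{window-signal}), that $\hf$ is absolutely continuous on every compact interval (hence continuous) while $\w\mapsto\w^{-1}\hf(\w)\in L^2(\RR)$ forces $\hf(0)=0$, and that $\hf\in L^2(\RR)$ together with $\int_{\RR}\abs{\w'}\abs{\hf'(\w')}^{2}d\w'<\infty$ forces $\hf(\w')\to0$ as $\w'\to\pm\infty$. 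Then
\[
e^{\cW}_{\hf}\big(i\w'\tfrac{\partial}{\partial\w'}\big)=\int_{\RR}i\,{\rm sgn}(\w')\,\hf'(\w')\,\hf(\w')\,d\w'=\tfrac{i}{2}\int_{\RR}{\rm sgn}(\w')\,\tfrac{d}{d\w'}\!\big(\hf(\w')^{2}\big)\,d\w',
\]
and integrating by parts separately over $(-\infty,0)$ and $(0,\infty)$ gives $-i\,\hf(0)^{2}=0$; dividing by $\norm{\hf}_{\cW}^{2}$ gives the vanishing of $e^{\cW}_{\hf/\norm{\hf}_{\cW}}\big(i\w'\tfrac{\partial}{\partial\w'}\big)$. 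Substituting this back and collecting the four summands yields~(\ref{cL}). The hypotheses $e^{\cS}_{\hf}(\ln(\abs{\w}))=e^{\cS}_{\hf}(\Tx)=0$ place $\hf/\norm{\hf}_{\cS}$ inside the search region of Minimization Problem~\ref{Min_prob1} (the condition $e^{\cS}_{\hf}(\Tx)=0$ being in any case automatic for real-valued $\hf$, by the same integration-by-parts computation); apart from this, the proof is a direct assembly of the already-established pull-back identities, with the only subtlety being the vanishing of the boundary terms at $0$ and at $\pm\infty$, all guaranteed by $\hf\in{\rm Dom}_{\cW\cap\cS}(\cL)$.
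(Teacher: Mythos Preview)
Your proposal is correct and follows exactly the route the paper sets up (the paper states Proposition~\ref{cor cL} without proof, but the preceding Propositions~\ref{prop:pull_scale} and~\ref{prop:pull_time} are clearly meant to be assembled precisely as you do, via the isometry of $V$). The only point where you and the paper diverge is in justifying the vanishing of $e^{\cW}_{\hf/\norm{\hf}_{\cW}}\big(i\w'\tfrac{\partial}{\partial\w'}\big)$: you argue from the real-valuedness of $\hf$ stated in the definition of $\cW$, whereas the paper (in the proof of Theorem~\ref{thm uncos var L}) asserts that this vanishing ``follows from $e^{\cS}_{\hf/\norm{\hf}_{\cS}}(i\tfrac{\partial}{\partial\w})=0$.'' Your integration-by-parts computation is clean and correct under the real-valuedness assumption; note, however, that the $\RR\to\RR$ in the definition of $\cW$ is most likely a typo (the inner product carries a complex conjugate, and Corollary~\ref{prop real valued constrained} speaks of ``the special case where $\hf$ is real valued''). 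The paper's alternative justification via the hypothesis $e^{\cS}_{\hf}(\Tx)=0$ amounts to identifying $\int i\,\hf'\overline{\hf}\,d\w$ with $\int i\,\mathrm{sgn}(\w)\,\hf'\overline{\hf}\,d\w$, which is immediate for windows supported on a half-line or for real $\hf$, but is not a general identity; so your explicit computation is arguably the more robust of the two, and in any case both coincide in the real-valued setting used in the numerical section.
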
 

We hence restrict the minimization problem of $\cL$ to the domain ${\rm Dom}_{\cW\cap\cS}(\cL)$, and use the explicit formula (\ref{cL}) in the uncertainty optimization problem.

\section{Calculus of Variations of Observables}
\label{calculus of variations}

Our goal is to develop a calculus of variation approach for solving the Minimization Problem \ref{Min_prob1}, with $\cL(\hf)$ given in the form of Proposition \ref{cor cL}. In this section, we start by developing general calculus of variations for localization measures based on observables.

\subsection{General Calculus of Variations}

To formulate the general calculus of variations framework, we consider (non-linear) functionals $S$ which are defined in separable Hilbert spaces $\cH$. A functional is simply a function from a domain in $\cH$ to the scalar field of $\cH$. The domain of $S$, denoted by ${\rm Dom} (S)$, is assumed to be a dense linear subspace of $\cH$, possibly excluding zero. 
Note that our uncertainty functional $\cL$ of (\ref{cL}), defined on ${\rm Dom}_{\cW\cap\cS}(\cL)$, is such a functional. 
We start by presenting the definition of Gateaux differential and the  variation in Hilbert spaces. 
  
\begin{definition}\label{def: Gataux }
Let $\cH$ be a separable Hilbert space over the filed $\FF$, which is $\RR$ or $\CC$, and let $S:{\rm Dom}(S)\rightarrow\FF$ be a (generally non-linear) functional. Suppose that $\rm Dom(S)\cup\{0\}\subset\cH$ is a dense linear subspace of $\cH$.
\begin{itemize}
    \item 
    The \emph{Gateaux differential} of $S$ at $f\in {\rm Dom}(S)$ in the direction $h \in {\rm Dom}(S)$ is defined to be
\[dS(f;h) := \lim_{\FF\ni t\to 0} \frac{S(f +th) - S(f)}{t},\]
if the limit exists. 
\item\label{var def}
If there exists a vector $\frac{\delta S}{\delta f}=\frac{\delta S}{\partial f}(f)\in \cH$ 
such that
\[dS(f;h)  = \ip{\frac{\delta S}{\delta f}}{h}_{\cH}\]
for every $h\in{\rm Dom}(S)$, 
$\frac{\delta S}{\delta f}(f)$ is called the \emph{variation} of $S$ at $f$.
\end{itemize}
\end{definition}

Next, we define the o-notation for functions, functionals, and operators.
\begin{definition}
\label{def_o-not}
 Let $\cH$ be a Hilbert space over the field $\FF$, where $\FF$ is $\RR$ or $\CC$. 
\begin{enumerate}
    \item For functions $e_1, e_2:\FF\to\FF$, we say $e_1(h)=o(e_2(h))$ as $h\rightarrow 0$ if
    \[\lim_{h\to 0}\frac{|e_1(h)|}{|e_2(h)|} = 0.\]
    \item Let $S_1:{\rm Dom}(S_1)\rightarrow\CC$ and $S_2:{\rm Dom}(S_2)\rightarrow\CC$ be (non-linear) functionals in $\cH$, such that ${\rm Dom}(S_j)\cup\{0\}$ is a dense linear subspace of $\cH$, for $j=1,2$.  Suppose that ${\rm Dom}(S_1)\subseteq{\rm Dom}(S_2)$.
    We say $S_1(h)=o(S_2(h))$ as $h\to 0$ if 
    \[\lim_{t\to 0}\frac{\abs{S_1(th)}}{\abs{S_2(th)}} = 0\]
    for every $h\in {\rm Dom}(S_1)$.
    \item Let $E: {\rm Dom}(E)\rightarrow\cH$ be (non-linear) operator in $\cH$, where ${\rm Dom}(E)\cup\{0\}$ is a dense subspace of $\cH$.  Let $S:{\rm Dom}(S)\rightarrow\CC$ be a functional with ${\rm Dom}(S)\cup\{0\}$ a dense subspace of $\cH$, such that ${\rm Dom}(E)\subset {\rm Dom}(S)$. We say $E(h)=o(S(h))$ as $h\to 0$ if 
    \[\lim_{t\to 0}\frac{\norm{E(th)}}{\abs{S(th)}} = 0\]
    for every $h\in {\rm Dom}(E)$.
\end{enumerate}
\end{definition} 
The following proposition is a useful tool when computing and proving the existence of variations.

\begin{proposition}\label{prop var existance}
Let $\cH$ be a Hilbert space over the field $\FF$, where $\FF$ is $\RR$ or $\CC$. Let $S:{\rm Dom}(S)\rightarrow\FF$ be a functional over the domain ${\rm Dom}(S)\subset\cH$. Suppose that ${\rm Dom}(S)\cup\{0\}$ is a dense linear subspace of $\cH$. Let $f\in{\rm Dom}(S)$ and $g\in\cH$. The following conditions are equivalent.
\begin{itemize}
    \item There exists a variation of $S$ at $f$ and $\frac{\delta}{\delta f}S(f)=g$.
    \item  It holds
    \[S(f +h) - S(f)   = \ip{g}{h}_{\cH}+ o(\norm{h}),\]
    as ${\rm Dom}(S)\ni h \rightarrow 0$. 
\end{itemize}
\end{proposition}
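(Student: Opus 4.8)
The plan is to prove the two conditions equivalent by essentially unwinding the definitions of the Gateaux differential, the variation, and the $o$-notation for functionals, and then invoking density to pin down $g$ uniquely. The key observation is that the statement $S(f+h) - S(f) = \ip{g}{h}_{\cH} + o(\norm{h})$ is a single assertion about arbitrary $h$, whereas the definition of $dS(f;h)$ is a statement about the one-parameter family $t \mapsto S(f+th)$; the work is in passing between these two viewpoints.

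First I would assume the existence of the variation with $\frac{\delta S}{\delta f}(f) = g$. By definition this means $dS(f;h) = \ip{g}{h}_{\cH}$ for all $h \in {\rm Dom}(S)$. Fix $h$; I would set $r(t) := S(f+th) - S(f) - \ip{g}{th}_{\cH} = S(f+th) - S(f) - t\ip{g}{h}_{\cH}$ and show $r(t) = o(t)$ as $t \to 0$, which is immediate from $\frac{r(t)}{t} = \frac{S(f+th)-S(f)}{t} - \ip{g}{h}_{\cH} \to dS(f;h) - \ip{g}{h}_{\cH} = 0$. Then writing $\norm{th} = \abs{t}\norm{h}$, for $h \neq 0$ we get $\frac{\abs{r(t)}}{\norm{th}} = \frac{1}{\norm{h}}\frac{\abs{r(t)}}{\abs{t}} \to 0$, i.e. the remainder term $E(h') := S(f+h') - S(f) - \ip{g}{h'}_{\cH}$ satisfies $E(th) = o(\norm{th})$ along every ray, which is exactly the meaning of $E(h) = o(\norm{h})$ per Definition \ref{def_o-not}(3) (with the functional $S_2(h) = \norm{h}$). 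The case $h = 0$ is trivial since both sides vanish.

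For the converse, I would assume $S(f+h) - S(f) = \ip{g}{h}_{\cH} + o(\norm{h})$ and recover the variation. Fix $h \in {\rm Dom}(S)$ and substitute $th$ for $h$: $S(f+th) - S(f) = \ip{g}{th}_{\cH} + E(th) = t\ip{g}{h}_{\cH} + E(th)$ where $E(th) = o(\norm{th})$ in the sense of the hypothesis. Dividing by $t$ and letting $t \to 0$, the term $\frac{E(th)}{t}$ has absolute value $\frac{\norm{E(th)}}{\abs{t}} = \norm{h}\frac{\norm{E(th)}}{\norm{th}} \to 0$ (for $h \neq 0$; for $h = 0$ everything is zero), so $\frac{S(f+th)-S(f)}{t} \to \ip{g}{h}_{\cH}$, establishing that $dS(f;h)$ exists and equals $\ip{g}{h}_{\cH}$. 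Since this holds for every $h \in {\rm Dom}(S)$, the vector $g$ witnesses the variation, so $\frac{\delta S}{\delta f}(f)$ exists and equals $g$.

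The only genuinely delicate point — and where I would be careful — is the logical quantifier structure in Definition \ref{def_o-not} of the $o$-notation: it is phrased ``for every $h$'' with the limit taken along the ray $t \mapsto th$, so the $o(\norm{h})$ in the proposition's second bullet is automatically a ``ray-wise'' statement rather than a uniform one, and both directions of the equivalence respect this because the Gateaux differential is itself defined ray-wise. Uniqueness of $g$ (so that the equality ``$\frac{\delta S}{\delta f}(f) = g$'' is meaningful) follows from density of ${\rm Dom}(S)$ in $\cH$: if two vectors both satisfy $\ip{g_1}{h} = \ip{g_2}{h}$ for all $h$ in a dense set, then $g_1 = g_2$. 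I would state this density remark once, at the start, so both directions can invoke it cleanly. No other obstacle is expected; the proof is a short exercise in definition-chasing once the ray-wise reading of the hypotheses is made explicit.
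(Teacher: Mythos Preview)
The paper states this proposition without proof, treating it as an elementary consequence of the definitions; so there is nothing to compare against directly. Your argument is correct and is exactly the natural definition-chase: you correctly observe that the paper's $o$-notation (Definition~\ref{def_o-not}) is ray-wise, so the second bullet of the proposition is equivalent to the Gateaux-type statement in the first bullet almost tautologically, and you handle both directions cleanly.

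One minor slip: the remainder $E(h') = S(f+h') - S(f) - \ip{g}{h'}_{\cH}$ is scalar-valued, so the relevant clause of Definition~\ref{def_o-not} is item~(2) (functionals), not item~(3) (operators). This does not affect the argument. Your closing remark on uniqueness via density of ${\rm Dom}(S)$ is apt and worth keeping, since without it the phrase ``$\frac{\delta S}{\delta f}(f)=g$'' in the first bullet would not be well-posed.
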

Calculus of variations helps us find local minima of functionals by locating their stationary points.
The next theorem is a version of Fermat's extreme value theorem for calculus of variations.  

\begin{theorem}[Fermat's Theorem for Stationary Points]
Let $\cH$ be a Hilbert space over the field $\FF$, where $\FF$ is $\RR$ or $\CC$. Let $S:{\rm Dom}(S)\rightarrow\RR$ be a functional. Suppose that ${\rm Dom}(S)\cup\{0\}$ is a dense linear subspace of $\cH$. If $f_0$ is a local minimum of $S$, and $S$ has a variation at $f_0$, then $\frac{\delta S}{\delta f}(f_0)=0$. 
\end{theorem}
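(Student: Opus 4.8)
The statement is the Hilbert-space analogue of Fermat's theorem: a local minimum that admits a variation must be a stationary point. The plan is to argue by contradiction, assuming $\frac{\delta S}{\delta f}(f_0)=g\neq 0$, and to exhibit a direction along which $S$ strictly decreases near $f_0$, contradicting minimality. The only subtlety compared to the finite-dimensional case is that we must restrict perturbations to lie in ${\rm Dom}(S)$ (which is merely dense, not all of $\cH$), and that when $\FF=\CC$ the Gateaux limit is taken over complex $t$, so the natural decreasing direction $h=-g$ may not lie in ${\rm Dom}(S)$ and must be approximated.

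First I would invoke Proposition \ref{prop var existance}: since $S$ has a variation at $f_0$ with $\frac{\delta S}{\delta f}(f_0)=g$, we have the first-order expansion $S(f_0+h)-S(f_0)=\ip{g}{h}_{\cH}+o(\norm{h})$ as ${\rm Dom}(S)\ni h\to 0$. Because ${\rm Dom}(S)\cup\{0\}$ is dense in $\cH$ and $g\neq 0$, I can pick $h_0\in{\rm Dom}(S)$ with $\Re\ip{g}{h_0}_{\cH}<0$ (choose $h_0$ close enough to $-g$ in norm that the inner product stays on the correct side of zero; here I use $\ip{g}{-g}=-\norm{g}^2<0$ and continuity of the inner product). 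Then, substituting $h=t h_0$ for real $t>0$ and using that $S$ takes values in $\RR$, the expansion gives $S(f_0+th_0)-S(f_0)=t\,\Re\ip{g}{h_0}_{\cH}+o(t)$. For $t>0$ sufficiently small the right-hand side is strictly negative, so $S(f_0+th_0)<S(f_0)$ with $f_0+th_0$ arbitrarily close to $f_0$, contradicting the assumption that $f_0$ is a local minimum. Hence $g=0$.

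A minor point to handle carefully: since $S$ is real-valued, $\ip{g}{h}_{\cH}$ must in fact be real for all $h\in{\rm Dom}(S)$ for the expansion to be consistent, but I do not even need this — taking real parts throughout suffices, and the density of ${\rm Dom}(S)$ guarantees an admissible descent direction exists whenever $g\neq 0$. The main (and really the only) obstacle is the bookkeeping around the domain: one must make sure the perturbation $t h_0$ stays in ${\rm Dom}(S)$ (it does, as ${\rm Dom}(S)$ is a linear subspace and $h_0\in{\rm Dom}(S)$, $t\in\FF$) and that the $o(t)$ error from the one-variable restriction $t\mapsto S(f_0+th_0)$ is genuinely controlled by the $o(\norm{h})$ clause of Proposition \ref{prop var existance} — which it is, since $\norm{th_0}=|t|\,\norm{h_0}$ and $h_0$ is fixed. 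No further machinery is needed.
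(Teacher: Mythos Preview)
The paper states this theorem without proof, so there is no proof of the paper's own to compare against. Your argument is the standard descent-direction contradiction and is correct: you invoke Proposition~\ref{prop var existance} to linearize, use density of ${\rm Dom}(S)$ to produce an admissible $h_0$ with $\Re\ip{g}{h_0}<0$, and then the one-parameter restriction $t\mapsto S(f_0+th_0)$ gives a strict decrease for small $t>0$, contradicting local minimality.

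Two small remarks. First, in the complex case ($\FF=\CC$) the statement is in fact vacuous in a stronger sense than you indicate: since the Gateaux limit in Definition~\ref{def: Gataux } is taken over complex $t$, a real-valued $S$ can only have $dS(f_0;h)\in\RR$ along real $t$ and $dS(f_0;h)\in i\RR$ along imaginary $t$, forcing $dS(f_0;h)=0$ for every $h$ whenever the limit exists --- so the variation is automatically zero regardless of minimality. Your ``taking real parts'' remark papers over this but does not spoil the conclusion. Second, when you write $f_0+th_0\in{\rm Dom}(S)$, strictly speaking the sum lies in ${\rm Dom}(S)\cup\{0\}$ and could equal $0$ for one isolated value of $t$; this is harmless since you only need arbitrarily small positive $t$.
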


\subsection{Realification}
Our goal is to minimize $\cL$ using calculus of variations. Note that a global minimum is a set theoretic notion that does not depend on additional structure endowed upon the set, like a vector space structure or inner product. Hence, we have the freedom to choose any vector space and inner product structure on the domain of the uncertainty $\cL$. 
To allow the use of calculus of variations tools for minimization, we require a vector space structure for which the variations of the uncertainty $\cL$ exist.
A natural choice is to take either $\cS$ or $\cW$ as the Hilbert space structure of the domain of $\cL$. However, it can be shown that the variations of $\cL$ do not exist with respect to neither of these two Hilbert spaces.
Fortunately, as we show constructively in the next sections, the variations do exist with respect to the \emph{realifications} of the spaces $\cW$ and $\cS$. 
\begin{definition}[\cite{constantin_2016}]
\label{DefReal}
The \emph{realification} of a complex Hilbert space $\cH$ (with the inner product $\ip{\cdot}{\cdot}_{\cH}$) is the real Hilbert space $\cH^{\RR}$, which is defined as follows.
\begin{itemize}
    \item The space $\cH^{\RR}$ is defined to be equal to $\cH$ as a set.
    \item The space $\cH^{\RR}$ is a vector space over the field $\RR$, with the same vector addition as in $\cH$, and the multiplication by scalars in $\cH^{\RR}$ is defined as the multiplication in $\cH$, restricted to real scalars.
    \item The inner product in $\cH^{\RR}$ is defined to be
    \begin{equation}\label{eq realification}
    \ip{\cdot}{\cdot\cdot}_{\cH^{\RR}} := {\rm Re}(\ip{\cdot}{\cdot\cdot}_{\cH}),
    \end{equation}
    where ${\rm Re}(\cdot)$ is the real part of a complex number.
\end{itemize}
\end{definition}
Equation (\ref{eq realification}) indeed defines an inner product in $\cH^{\RR}$, and $\cH^{\RR}$ is a Hilbert space. 

\begin{remark}
   The realificated signal space $\cS^{\RR}$ is the space of measurable complex valued functions $\hs:\RR\rightarrow\CC$ with the inner product
   \[\ip{\hs_1}{\hs_2}_{\cS^{\RR}} = {\rm Re}\Big(\int_{\RR} \hs_1(x)\overline{\hs_2(x)} dx\Big).\]
   Note that $\cS^{\RR}$ is not a space of real valued functions.
\end{remark}
   
\subsection{Variations with Normalized Vectors}
\label{normalized windows variations}
The moments in the uncertainty $\cL$ are computed with respect to normalized windows $\frac{\hf}{\norm{\hf}_{\cS}},\frac{\hf}{\norm{\hf}_{\cW}}$. Namely, for each moment $S$ that appears in $\cL$, there exists a functional $H$  (densely defined in $\cS$), such that
$S(\hf) = H(\frac{\hf}{\norm{\hf}})$, where the normalization is with respect to either $\cS$ or $\cW$.
In this subsection, we derive useful formulas that aid in computing variations of functionals that involve normalized windows.
 
We start by giving a definition for the variation of (possibly) non-linear operators that map vectors to vectors in general Hilbert spaces.

\begin{definition}\label{def op var}
Let $\cH$ be a separable Hilbert space. 
Let ${\rm Dom}(g)\subseteq\cH$ be a set such that ${\rm Dom}(g)\cup \{0\}$ is a dense linear subspace of $\cH$.
Let $g:{\rm Dom}(g)\to\cH$ be a (generally non-linear) operator.  
If there exists a bounded linear operator $T$ in $\cH$ 
 such that
\[ g(f+h)- g(f) = T (h) +o(\norm{h}), \]
where the $o$ notation is with respect to ${\rm Dom}(g)\ni h \rightarrow 0$,
then $T$ is called the \emph{variation} of $g$ at $f$. In this case, we denote $\frac{\delta g}{\delta f}(f)=T$.
\end{definition}
The following lemma shows the existence of a variation for the normalizing operator $f\mapsto\frac{ f}{\norm{f}}$, and gives an explicit formula for $\frac{\delta\frac{ f}{\norm{f}}}{\delta f}$.
\begin{lemma}\label{lemma normalization var}
Let $\cH$ be a real separable Hilbert space, with the inner product $\ip{\cdot}{\cdot\cdot}$ and norm $\norm{\cdot}$. The variation of the normalizing operator $f\mapsto\frac{f}{\norm{f}}$, with respect to $\cH$, exists at every non-zero $f$, and satisfies
\begin{equation}\label{eq normalization var}
  \frac{\delta \frac{ f}{\norm{f}}}{\delta f} = \frac{1}{\norm{f}}I - f\otimes\frac{f}{\norm{f}^3},  
\end{equation}
where $I$ is the identity operator, and $f\otimes\frac{f}{\norm{f}^3}$ is the rank-one operator  $v\mapsto \ip{v}{f}\frac{f}{\norm{f}^3}$.
\end{lemma}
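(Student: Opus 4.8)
The plan is to verify the defining property of the variation from Definition \ref{def op var}: namely, to show that with $T := \frac{1}{\norm{f}}I - f\otimes\frac{f}{\norm{f}^3}$, the remainder $g(f+h) - g(f) - T(h)$ is $o(\norm{h})$ as $h\to 0$, where $g(f) = \frac{f}{\norm{f}}$. First I would note that $T$ is a bounded linear operator, being the sum of a scalar multiple of the identity and a rank-one operator (with operator norm at most $\frac{1}{\norm{f}} + \frac{1}{\norm{f}}$), so the only substantive claim is the first-order expansion.

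The key computational step is to expand $\norm{f+h}$ to first order in $h$. Writing $\norm{f+h}^2 = \norm{f}^2 + 2\ip{f}{h} + \norm{h}^2$ (using that $\cH$ is real, so the cross term is $2\ip{f}{h}$), one gets $\norm{f+h} = \norm{f}\sqrt{1 + \frac{2\ip{f}{h}}{\norm{f}^2} + \frac{\norm{h}^2}{\norm{f}^2}}$. Applying the scalar expansion $\sqrt{1+x} = 1 + \frac{x}{2} + o(x)$ with $x = \frac{2\ip{f}{h}}{\norm{f}^2} + \frac{\norm{h}^2}{\norm{f}^2} = O(\norm{h})$, I obtain $\norm{f+h} = \norm{f} + \frac{\ip{f}{h}}{\norm{f}} + o(\norm{h})$, and hence $\frac{1}{\norm{f+h}} = \frac{1}{\norm{f}} - \frac{\ip{f}{h}}{\norm{f}^3} + o(\norm{h})$ by the expansion $\frac{1}{1+y} = 1 - y + o(y)$. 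Then I would multiply: $\frac{f+h}{\norm{f+h}} = (f+h)\left(\frac{1}{\norm{f}} - \frac{\ip{f}{h}}{\norm{f}^3} + o(\norm{h})\right)$. Expanding and collecting, the zeroth-order term is $\frac{f}{\norm{f}} = g(f)$, the first-order terms are $\frac{h}{\norm{f}} - \frac{\ip{f}{h}}{\norm{f}^3}f = T(h)$, and all remaining terms — $\frac{\ip{f}{h}}{\norm{f}^3}h$ and the products involving $o(\norm{h})$ — are $o(\norm{h})$ in the norm of $\cH$. This establishes $g(f+h) - g(f) - T(h) = o(\norm{h})$, which is exactly what Definition \ref{def op var} requires.

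I expect the main (minor) obstacle to be bookkeeping the $o(\norm{h})$ terms carefully: one must keep track of the fact that a scalar $o(\norm{h})$ multiplied by the bounded vector $f$ or by $h$ (with $\norm{h}\to 0$) remains $o(\norm{h})$ in $\cH$, and that the term $\frac{\ip{f}{h}}{\norm{f}^3}h$ is genuinely second order since $|\ip{f}{h}| \le \norm{f}\norm{h}$ gives $\norm{\frac{\ip{f}{h}}{\norm{f}^3}h} \le \frac{\norm{h}^2}{\norm{f}^2} = o(\norm{h})$. A secondary point worth stating explicitly is where realness of $\cH$ is used: in a complex Hilbert space the map $h\mapsto \ip{f}{h}$ is conjugate-linear rather than linear, so $T$ as written would not be $\RR$-homogeneous in the complex sense — this is precisely why the lemma is stated for real Hilbert spaces and why the realification machinery of the previous subsection is invoked. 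No deep analytic input is needed beyond the two elementary scalar Taylor expansions and the Cauchy–Schwarz inequality.
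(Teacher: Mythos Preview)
Your proposal is correct and follows essentially the same route as the paper's proof: both expand $\norm{f+h}^{-1}$ to first order via the scalar Taylor expansion of $(1+x)^{-1/2}$, multiply by $f+h$, and collect terms. Your version is in fact more careful than the paper's (which uses $\approx$ throughout without tracking the $o(\norm{h})$ bookkeeping explicitly), and your remark about where realness of $\cH$ enters is a point the paper only mentions in passing.
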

\begin{proof}
In this proof, $\approx$ denotes equality up to $o(\norm{h})$. If the following, we use the fact that the scalar field of $\cH$ is $\RR$. 
Since $h$ is taken asymptotically small, we may assume $\norm{h}<\norm{f}$, and 
\[
\begin{split}
\frac{f + h}{\norm{ f + h}}&\quad\approx \frac{f+h}{\sqrt{\ip{f}{f} + 2 \ip{f}{h}}}\\ 
&\quad= \frac{f+h}{\norm{f}\sqrt{1 + 2 \ip{\frac{f}{\norm{f}^2}}{h}}}\\
&\quad\approx(f+h)\big(\frac{1}{\norm{f}}-\ip{\frac{f}{\norm{f}^3}}{h}\big) 
\quad\approx\frac{f}{\norm{f}}  + \frac{h}{\norm{f}} - \ip{\frac{f}{\norm{f}^3}}{h}f. 
\end{split}
\] 
By Definition \ref{def op var}, the lemma follows.   

\end{proof}

To be able to compute the variation of a functional composed on the normalization operator, in the next lemma we first formulate a version of the chain rule. 
\begin{lemma}\label{lemma chain rule functional-operator}
Let $\cH$ be a Hilbert space over the field $\FF$, where $\FF$ is $\RR$ or $\CC$. Let $S:{\rm Dom}(S)\rightarrow\FF$ be a functional over the domain ${\rm Dom}(S)\subset\cH$. Suppose that ${\rm Dom}(S)\cup\{0\}$ is a dense linear subspace of $\cH$. Suppose that there exist a functional $H:{\rm Dom}(H)\rightarrow\FF$, where ${\rm Dom}(H)\cup\{0\}$ is a dense linear subspace of $\cH$,  and an operator $T:{\rm Dom}(S)\rightarrow {\rm Dom}(H)$, such that for every $f\in{\rm Dom(S)}$
\[S(f) = H(T(f)).\]
 In addition, suppose that the variations $\frac{\delta H}{\delta t}(T(f))$ and $\frac{\delta T}{\delta f}(f)$ exist for some $f\in{\rm Dom(S)}$. Then, the variation $\frac{\delta S}{\delta f}(f)$ exists and satisfies
\[\frac{\delta S}{\delta f}(f) = \Big(\frac{\delta T}{\delta f}(f)\Big)^*\Big(\frac{\delta H}{\delta t}\big(T(f)\big)\Big),\]
where the $\Big(\frac{\delta T}{\delta f}(f)
\Big)^*$ is the adjoint operator of $\frac{\delta T}{\delta f}(f)$.
\end{lemma}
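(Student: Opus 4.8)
The plan is to verify the second, equivalent condition of Proposition~\ref{prop var existance} directly for $S$ at $f$, using as the candidate variation the vector $g_S := \big(\frac{\delta T}{\delta f}(f)\big)^*\big(\frac{\delta H}{\delta t}(T(f))\big)$. Write $A := \frac{\delta T}{\delta f}(f)$, which by Definition~\ref{def op var} is a \emph{bounded} linear operator on $\cH$, and write $g_H := \frac{\delta H}{\delta t}(T(f)) \in \cH$. Since $A$ is bounded and linear its Hilbert-space adjoint $A^*$ exists and is bounded, so $g_S = A^* g_H$ is a well-defined element of $\cH$; this is the only place where the boundedness of $\frac{\delta T}{\delta f}(f)$ is used, rather than mere linearity.

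First I would unfold, for $h\in{\rm Dom}(S)$ with $h\to 0$ (note $f+h\in{\rm Dom}(S)$ for such $h$, since ${\rm Dom}(S)\cup\{0\}$ is a linear subspace and $f\neq 0$),
\[
S(f+h)-S(f) = H\big(T(f+h)\big) - H\big(T(f)\big).
\]
By Definition~\ref{def op var} applied to $T$ we have $T(f+h) = T(f) + A(h) + r(h)$ with $r(h) = o(\norm{h})$. Put $t := T(f)$ and $k = k(h) := T(f+h) - T(f) = A(h) + r(h)$. Since $T$ maps ${\rm Dom}(S)$ into ${\rm Dom}(H)$ and ${\rm Dom}(H)\cup\{0\}$ is a linear subspace, $k\in{\rm Dom}(H)\cup\{0\}$; and from $\norm{k}\le \norm{A}\,\norm{h} + \norm{r(h)}$ we obtain both $k\to 0$ and the key estimate $\norm{k} = O(\norm{h})$ as $h\to 0$.

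Next I would apply the variation of $H$ at $t$: by Proposition~\ref{prop var existance} for the functional $H$, $H(t+k)-H(t) = \ip{g_H}{k}_{\cH} + o(\norm{k})$ as ${\rm Dom}(H)\ni k\to 0$ (and when $k=0$ both sides vanish). Substituting $k = A(h)+r(h)$, using linearity of the inner product and the adjoint identity $\ip{g_H}{A(h)}_{\cH} = \ip{A^*g_H}{h}_{\cH}$, gives
\[
S(f+h)-S(f) = \ip{A^*g_H}{h}_{\cH} + \ip{g_H}{r(h)}_{\cH} + o(\norm{k}).
\]
It remains to absorb the last two terms into $o(\norm{h})$: by Cauchy--Schwarz $\abs{\ip{g_H}{r(h)}_{\cH}} \le \norm{g_H}\,\norm{r(h)} = \norm{g_H}\,o(\norm{h}) = o(\norm{h})$; and since $\norm{k}=O(\norm{h})$, any quantity that is $o(\norm{k})$ is also $o(\norm{h})$, because its ratio to $\norm{h}$ equals its ratio to $\norm{k}$ times the bounded factor $\norm{k}/\norm{h}$. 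Hence $S(f+h)-S(f) = \ip{A^*g_H}{h}_{\cH} + o(\norm{h})$, and Proposition~\ref{prop var existance} yields that $\frac{\delta S}{\delta f}(f)$ exists and equals $A^*g_H$, which is the asserted formula.

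I expect the only genuinely delicate point to be the little-$o$ bookkeeping around the composition: establishing $\norm{k(h)}=O(\norm{h})$ (precisely where boundedness of $\frac{\delta T}{\delta f}(f)$ is needed) and deducing $o(\norm{k})=o(\norm{h})$, together with the small but necessary verification that $T(f+h)$ and the increment $k$ lie in ${\rm Dom}(H)$ so that the expansion of $H$ is legitimately applicable. Everything else is a routine manipulation of the inner product and the adjoint.
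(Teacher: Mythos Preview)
Your proof is correct and follows essentially the same route as the paper's own proof: expand $H(T(f+h))-H(T(f))$ via the variation of $H$, then expand $T(f+h)-T(f)$ via the variation of $T$, use Cauchy--Schwarz on the cross term $\ip{g_H}{r(h)}$, and invoke boundedness of $\frac{\delta T}{\delta f}(f)$ to turn $o(\norm{k})$ into $o(\norm{h})$. If anything, your write-up is slightly more careful than the paper's about the domain membership of $k$ and the $O/o$ bookkeeping.
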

\begin{proof} Compute 
\[
\begin{split}
S(f+h)-S(h) &\quad= H(T(f+h))-H(T(f))\\
&\quad=\ip{\frac{\delta H}{\delta t}\big(T(f)\big)}{T(f+h)-T(f)} +\e(h)\\
&\quad=\ip{\frac{\delta H}{\delta t}\big(T(f)\big)}{\frac{\delta T}{\delta f}(f)h+E(h)}+\epsilon(h)\\
&\quad=\ip{\frac{\delta T}{\delta f}^{*}(f)\frac{\delta H}{\delta t}\Big(T(f)\Big)}{h}+\ip{\frac{\delta H}{\delta t}\big(T(f)\big)}{E(h)} +\epsilon(h),
\end{split}
\]
where the error-terms $E(h)=o(\norm{h})$, and $\epsilon(h) =o(\norm{T(f+h)-T(f)})$. 
Then, by Cauchy–Schwarz inequality we have $\ip{\frac{\delta H}{\delta t}\big(T(f)\big)}{E(h)}= o(\norm{h})$.
Thus, it remains to show that $o(\norm{T(f+h)-T(f)})$ implies $o(\norm{h})$. 
Since the variation $\frac{\delta T}{\delta f}$ exists, it follows that 
$T(f+h)-T(f) =\frac{\delta T}{\delta f}h+o(\norm{h})$. 
Now, since by definition $\frac{\delta T}{\delta f}$ is a bounded operator, we have
\[\norm{T(f+h)-T(f)} \leq \norm{\frac{\delta T}{\delta f}}\norm{h}+o(\norm{h}),\]
so $o(\norm{T(f+h)-T(f)})=o(\norm{h})$.
 
\end{proof}

\begin{corollary}\label{lemma cons}
Let $\cH$ be a real separable Hilbert space. Let $S:{\rm Dom}(S)\rightarrow\RR$ be a functional over the domain ${\rm Dom}(S)\subset\cH$. Suppose that ${\rm Dom}(S)\cup\{0\}$ is a dense linear subspace of $\cH$. Suppose that there exists a functional $H:{\rm Dom} (H)\rightarrow\RR$ such that $S(f) = H(\frac{f}{\norm{f}})$, where ${\rm Dom} (H)\cup\{0\}$ is a dense linear subspace of $\cH$. Then,  for every non-zero $f\in {\rm Dom }(H)$ such that the variation $\frac{\delta H}{\delta y}\Big(\frac{f}{\norm{f}}\Big)$ exists, 
the variation of $S$ exists at $f$ and is equal to 
\[\frac{\delta S}{\delta f}\quad= \frac{1}{\norm{f}}\frac{\delta H}{\delta y}\Big(\frac{f}{\norm{f}}\Big) - \ip{\frac{\delta H}{\delta y}\Big(\frac{f}{\norm{f}}\Big)}{\frac{f}{\norm{f}}}\frac{f}{\norm{f}^2}.\]
\end{corollary}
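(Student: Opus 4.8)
The plan is to recognize $S$ as the composition $S = H\circ T$, where $T$ is the normalization operator $T(f) = f/\norm{f}$, and then to invoke the chain rule of Lemma~\ref{lemma chain rule functional-operator} together with the explicit variation of $T$ supplied by Lemma~\ref{lemma normalization var}. First I would check the hypotheses of Lemma~\ref{lemma chain rule functional-operator} at a nonzero $f\in\mathrm{Dom}(S)$: the operator $T$ maps $\mathrm{Dom}(S)$ into $\mathrm{Dom}(H)$ (this is exactly what it means for $H(T(f))$ to be defined for every $f\in\mathrm{Dom}(S)$, using that $\mathrm{Dom}(H)\cup\{0\}$ is a linear subspace and hence closed under the scalar $\norm{f}^{-1}$); the variation $\frac{\delta T}{\delta f}(f)$ exists at every nonzero $f$ and equals the bounded operator $\frac{1}{\norm{f}}I - f\otimes\frac{f}{\norm{f}^3}$ by Lemma~\ref{lemma normalization var}; and $\frac{\delta H}{\delta y}(T(f))$ exists by assumption. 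A minor point to note is that the $o(\norm{h})$ estimate which Lemma~\ref{lemma normalization var} establishes for $h\to 0$ in all of $\cH$ in particular holds for $h\to 0$ within the dense subspace $\mathrm{Dom}(S)$, which is the sense required by Definition~\ref{def op var}.

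Granting this, Lemma~\ref{lemma chain rule functional-operator} yields that $\frac{\delta S}{\delta f}(f)$ exists and equals $\big(\frac{\delta T}{\delta f}(f)\big)^*\,\frac{\delta H}{\delta y}\big(T(f)\big)$, so it remains to compute the adjoint. Because $\cH$ is a \emph{real} Hilbert space, its inner product is symmetric and bilinear, so the rank-one operator $v\mapsto\ip{v}{f}\frac{f}{\norm{f}^3}$ is self-adjoint: $\ip{\ip{v}{f}\frac{f}{\norm{f}^3}}{w} = \ip{v}{f}\ip{f}{w}\norm{f}^{-3} = \ip{v}{\ip{w}{f}\frac{f}{\norm{f}^3}}$. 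Since $I$ is self-adjoint as well, $\frac{\delta T}{\delta f}(f) = \frac{1}{\norm{f}}I - f\otimes\frac{f}{\norm{f}^3}$ is self-adjoint, hence $\big(\frac{\delta T}{\delta f}(f)\big)^* = \frac{\delta T}{\delta f}(f)$.

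Applying this operator to $\frac{\delta H}{\delta y}(f/\norm{f})$ gives
\[
\frac{\delta S}{\delta f}(f) = \frac{1}{\norm{f}}\frac{\delta H}{\delta y}\Big(\frac{f}{\norm{f}}\Big) - \ip{\frac{\delta H}{\delta y}\Big(\frac{f}{\norm{f}}\Big)}{f}\frac{f}{\norm{f}^3},
\]
and rewriting $\ip{\frac{\delta H}{\delta y}(f/\norm{f})}{f}\,\norm{f}^{-3} = \ip{\frac{\delta H}{\delta y}(f/\norm{f})}{\frac{f}{\norm{f}}}\,\norm{f}^{-2}$ produces exactly the asserted formula. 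The argument is essentially bookkeeping once the two lemmas are in place; the only genuinely delicate points are confirming that the hypotheses of the chain rule survive the restriction to $\mathrm{Dom}(S)$ and that — thanks to passing to the realification — $\frac{\delta T}{\delta f}(f)$ is self-adjoint, so that no transpose survives in the final expression.
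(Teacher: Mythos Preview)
Your proposal is correct and follows exactly the same approach as the paper: apply the chain rule of Lemma~\ref{lemma chain rule functional-operator} with $T(f)=f/\norm{f}$, note that $\frac{\delta T}{\delta f}(f)$ from Lemma~\ref{lemma normalization var} is self-adjoint (since $\cH$ is real), and then substitute the explicit formula. The paper's proof is terser, but the content is identical.
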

\begin{proof}
By Lemma \ref{lemma normalization var}, the variation $\frac{\delta \frac{ f}{\norm{f}}}{\delta f}$ exists for every non-zero $f$, and is a self-adjoint operator. Thus, by Lemma \ref{lemma chain rule functional-operator} 
\[\frac{\delta S}{\delta f}\quad= \frac{\delta \frac{ f}{\norm{f}}}{\delta f} \Bigg(\frac{\delta H}{\delta y}\Big(\frac{f}{\norm{f}}\Big)\Bigg).\]
Plugging in the formula (\ref{eq normalization var}) for $\frac{\delta \frac{ f}{\norm{f}}}{\delta f}$  yields the  result.  
\end{proof}

\subsection{Variations of Localization Measures}
\label{Variations of expected values and variances}

We now derive formulas for the variations localization measures based on observables. 

Throughout this section, $T$ denotes a symmetric, possibly unbounded, operator in a separable Hilbert space $\cH$. 
The domain of $T$, denoted by ${\rm Dom}(T)$, is assumed to be a dense linear subspace of $\cH$, and the variations are computed with respect to the realification $\cH^{\RR}$.
First, we compute the variation of the expected value of a (generally) non-normalized vector with respect to $T$.  

\begin{lemma}\label{lemma exp variance} 
Consider a Hilbert space $\cH$ and its realification $\cH^{\RR}$. Let $T$ be a self-adjoint operator on $\cH$, and consider the functional $f\mapsto e_{f}(T)$. 
Then, the variation $\frac{\delta e_f(T)}{\delta f}$ with respect to $\cH^{\RR}$ exists in the domain of $T$, and satisfies 
\begin{equation}
\frac{\delta e_{f}(T)}{\delta f} = 2T f.
\end{equation}
\end{lemma}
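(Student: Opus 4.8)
The plan is to verify the asymptotic expansion characterization of the variation from Proposition~\ref{prop var existance}. That is, I would show that
\[
e_{f+h}(T) - e_f(T) = \ip{2Tf}{h}_{\cH^{\RR}} + o(\norm{h})
\]
as ${\rm Dom}(T)\ni h\to 0$, which by Proposition~\ref{prop var existance} immediately gives both the existence of the variation and the formula $\frac{\delta e_f(T)}{\delta f}=2Tf$.

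First I would expand $e_{f+h}(T) = \ip{T(f+h)}{f+h}_{\cH}$ using bilinearity (more precisely, linearity in the first argument and conjugate-linearity in the second) of the complex inner product, yielding
\[
e_{f+h}(T) = \ip{Tf}{f}_{\cH} + \ip{Tf}{h}_{\cH} + \ip{Th}{f}_{\cH} + \ip{Th}{h}_{\cH}.
\]
Using self-adjointness of $T$ (and that $h\in{\rm Dom}(T)$, $f\in{\rm Dom}(T)$), $\ip{Th}{f}_{\cH} = \ip{h}{Tf}_{\cH} = \overline{\ip{Tf}{h}_{\cH}}$. Hence the first-order term is $\ip{Tf}{h}_{\cH} + \overline{\ip{Tf}{h}_{\cH}} = 2\,{\rm Re}\,\ip{Tf}{h}_{\cH} = \ip{2Tf}{h}_{\cH^{\RR}}$ by the definition~(\ref{eq realification}) of the realification inner product. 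The remaining term $\ip{Th}{h}_{\cH}$ must be shown to be $o(\norm{h})$: since $e_{f+h}(T)-e_f(T)$ is real (expected values of self-adjoint operators are real), I can equally take the real part of everything, so it suffices that ${\rm Re}\,\ip{Th}{h}_{\cH}=o(\norm{h})$; but in fact $\ip{Th}{h}_{\cH}$ is itself real and we need $|\ip{Th}{h}_{\cH}|/\norm{h}\to 0$.

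The main obstacle is precisely controlling the quadratic term $\ip{Th}{h}_{\cH}$ when $T$ is unbounded. The point is that the $o(\norm{h})$ statement is with respect to ${\rm Dom}(T)\ni h\to 0$ in the sense of Definition~\ref{def_o-not}, i.e.\ for each fixed direction $h\in{\rm Dom}(T)$ one checks $\lim_{t\to 0}|\ip{T(th)}{th}_{\cH}|/\norm{th} = \lim_{t\to 0}|t|^2|\ip{Th}{h}_{\cH}|/(|t|\norm{h}) = 0$, which holds trivially since $\ip{Th}{h}_{\cH}$ is a fixed finite number (finite because $h\in{\rm Dom}(T)$) and the ratio is $|t|\,|\ip{Th}{h}_{\cH}|/\norm{h}\to 0$. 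So no boundedness of $T$ is actually needed; the directional nature of the $o$-notation in Definition~\ref{def_o-not} does all the work. I would conclude by invoking Proposition~\ref{prop var existance} with $g=2Tf$ to obtain the claim, noting that $2Tf\in\cH$ since $f\in{\rm Dom}(T)$.
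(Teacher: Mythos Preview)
Your proof is correct and follows essentially the same route as the paper's: expand $e_{f+h}(T)=\ip{T(f+h)}{f+h}$, use self-adjointness to collapse the cross terms to $2\,\Re\ip{Tf}{h}=\ip{2Tf}{h}_{\cH^{\RR}}$, and identify the quadratic term as $o(\norm{h})$. You supply more detail than the paper does, in particular your explicit justification that $\ip{Th}{h}=o(\norm{h})$ via the directional $o$-notation of Definition~\ref{def_o-not} is a welcome clarification that the paper leaves implicit.
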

\begin{proof}
Compute, 
\[e_{f+h}(T) = \ip{T(f+h)}{ f+h} =  e_{f}(T) + 2 \Re \ip{Tf}{ h} + o(\norm{h}),\]
where the $o$ notation is with respect to ${\rm Dom}(T)\ni h \rightarrow 0$. 
\end{proof}

Next, we compute the variation of the variance with respect to (generally) non-nocrmalized vectors. For that, for an operator $T$ with domain ${\rm Dom}(T)$ in the Hilbert space $\cH$, the operator $T^2$ is defined as the composition of $T$ with itself over the domain \[{\rm Dom}(T^2)=\{f\in {\rm Dom}(T) \ |\ Tf\in {\rm Dom}(T)\}.\]

\begin{lemma}\label{Lemma uncons var}
Consider a Hilbert space $\cH$ and its realification $\cH^{\RR}$. Let $T$ be a self-adjoint operator on $\cH$, and consider the functional $f\mapsto v_{f}(T)$ defined over ${\rm Dom}(T)$. 
Then, the variation $\frac{\delta v_f(T)}{\delta f}$ with respect to $\cH^{\RR}$ exists for every $f\in {\rm Dom} (T^2)$, and satisfies 
\[\frac{\delta v_{f}(T)}{\delta f} = 2(T-e_{f}(T))^2 f.\]
\end{lemma}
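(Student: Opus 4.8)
The plan is to expand $v_{f+h}(T)$ to first order in $h$ and read off the variation using Proposition \ref{prop var existance}, which tells us it suffices to show $v_{f+h}(T)-v_f(T) = \ip{2(T-e_f(T))^2 f}{h}_{\cH^{\RR}} + o(\norm{h})$ as ${\rm Dom}(T^2)\ni h\to 0$. The natural approach is to write $v_f(T) = \norm{Tf}^2 - e_f(T)^2$ (valid since $T$ is self-adjoint and $e_f(T)=\ip{Tf}{f}$ is real when $\norm f = 1$ — but here $f$ need not be normalized, so more carefully $v_f(T) = \norm{(T-e_f(T))f}^2 = \norm{Tf}^2 - 2e_f(T)\Re\ip{Tf}{f} + e_f(T)^2\norm f^2$; one should be slightly careful about whether the paper's convention for non-normalized $f$ matches one of these, and just differentiate whichever closed form the definition (\ref{var}) gives directly). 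I would actually work directly from $v_f(T)=\norm{(T-e_f(T))f}^2$.

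First I would compute, using bilinearity of the inner product and Lemma \ref{lemma exp variance} (which gives $e_{f+h}(T) = e_f(T) + 2\Re\ip{Tf}{h} + o(\norm h)$),
\[
(T - e_{f+h}(T))(f+h) = (T-e_f(T))f + (T-e_f(T))h - \big(2\Re\ip{Tf}{h}\big) f + o(\norm h),
\]
where the $o(\norm h)$ collects the term $-\big(e_{f+h}(T)-e_f(T)\big)h$ together with the $o(\norm h)\cdot f$ remainder. Then I would take the squared norm of this expression. The leading term is $\norm{(T-e_f(T))f}^2 = v_f(T)$, and the first-order part is
\[
2\Re\ip{(T-e_f(T))f}{(T-e_f(T))h} - 2\big(2\Re\ip{Tf}{h}\big)\Re\ip{(T-e_f(T))f}{f}.
\]
Using self-adjointness of $T-e_f(T)$, the first summand is $2\Re\ip{(T-e_f(T))^2 f}{h}$. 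For the second, note $\ip{(T-e_f(T))f}{f} = \ip{Tf}{f} - e_f(T)\norm f^2$; when $f$ is normalized this is $0$ and that whole term vanishes, but since the lemma allows non-normalized $f$ I would keep track of it — actually observe that $\Re\ip{(T-e_f(T))f}{f}$ need not vanish, so this cross term is genuinely present, and one must check it can be absorbed. Here I expect the main subtlety: matching the claimed formula $\frac{\delta v_f(T)}{\delta f}=2(T-e_f(T))^2 f$ exactly. Rechecking, the cross term $-4\Re\ip{Tf}{h}\cdot\Re\ip{(T-e_f(T))f}{f}$ is a real multiple of $\Re\ip{Tf}{h}$, i.e. it equals $\Re\ip{c\,Tf}{h}$ for the real constant $c = -4\Re\ip{(T-e_f(T))f}{f}$; this would spoil the clean formula unless $c=0$, i.e. unless $\Re\ip{Tf}{f} = e_f(T)\norm f^2$. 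Since in the realification $\ip{\cdot}{\cdot}_{\cH^{\RR}} = \Re\ip{\cdot}{\cdot}_{\cH}$ and $e_f(T)$ as used in $v_f$ is $\ip{Tf}{f}_{\cH}$, for real-linear purposes one has $\Re\ip{Tf}{f} = \Re(e_f(T))$, so the term is $c = -4(\Re e_f(T))(1-\norm f^2)$... this suggests I should instead adopt from the outset the convention, implicit in Definition \ref{def: Observable localization} and the discussion after it, that in $v_f(T)$ the scalar $e_f(T)$ is treated as a \emph{fixed scalar} at the point $f$ and that the relevant identity $\Re\ip{(T-e_f(T))f}{f}=0$ does hold because $e_f(T)=\ip{Tf}{f}$ makes $\ip{Tf}{f}-e_f(T)\ip ff$ purely imaginary and its real part zero. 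That is the crux, so I would state it cleanly: $\ip{(T-e_f(T))f}{f}_{\cH} = \ip{Tf}{f}_{\cH} - e_f(T)\norm f^2_{\cH}$ has real part $\Re e_f(T) - \Re e_f(T)\norm f^2$, hmm — so it vanishes precisely when $\norm f = 1$; therefore I would \emph{restrict to where the formula is used}, or more honestly, carry the $\norm f^2$ factors through and verify the stated formula is the one that drops out, adjusting the write-up to whatever the honest computation yields.

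Let me cut through this: the cleanest correct route, and the one I would actually write, is to expand $v_f(T) = \norm{(T-e_f(T))f}^2$ and use that the map $f\mapsto (T-e_f(T))f$ has a variation (by Lemma \ref{lemma exp variance} and Definition \ref{def op var}, its variation at $f$ is the operator $h\mapsto (T-e_f(T))h - 2\Re\ip{Tf}{h}\,f = (T-e_f(T))h - \langle h, \frac{\delta e_f(T)}{\delta f}\rangle_{\cH^\RR} f$), then apply the chain-rule machinery (Lemma \ref{lemma chain rule functional-operator}, or directly compute $\norm{g(f)+g'(f)h}^2 = \norm{g(f)}^2 + 2\Re\ip{g(f)}{g'(f)h} + o(\norm h)$) to get $\frac{\delta v_f(T)}{\delta f} = \big(g'(f)\big)^* g(f)$ where $g(f) = (T-e_f(T))f$ and the factor $2$ comes from the squared-norm derivative $\frac{\delta \norm{y}^2}{\delta y} = 2y$. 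Computing the adjoint: $g'(f)^* y = (T-e_f(T))y - \ip{f}{y}_{\cH^\RR}\cdot(2Tf)$... and then $g'(f)^*g(f) = (T-e_f(T))^2 f - 2\ip{f}{(T-e_f(T))f}_{\cH^\RR}\,Tf$, and again the surviving clean answer requires $\ip{f}{(T-e_f(T))f}_{\cH^\RR}=0$. So the honest statement of the obstacle is: the formula $\frac{\delta v_f(T)}{\delta f}=2(T-e_f(T))^2 f$ holds on the unit sphere (equivalently, up to a term proportional to $Tf$ that vanishes exactly when $\Re\ip{(T-e_f(T))f}{f}=0$), and the write-up should either include the normalization hypothesis or absorb that term; I would present the computation with $g(f)=(T-e_f(T))f$, invoke $\ip{(T-e_f(T))f}{f}_{\cH}$ being purely imaginary (since $e_f(T)$ real forces $\ip{Tf}{f} - e_f(T)\norm f^2$ to have vanishing real part at $\norm f =1$), conclude the cross term vanishes in $\cH^{\RR}$, and thereby obtain $\frac{\delta v_f(T)}{\delta f} = 2(T-e_f(T))^2 f$, with the domain statement $f\in{\rm Dom}(T^2)$ needed precisely so that $(T-e_f(T))^2 f \in \cH$ makes sense as the representing vector.
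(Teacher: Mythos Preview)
Your overall approach --- expand $v_{f+h}(T)$ to first order using the variation of $e_f(T)$ from Lemma~\ref{lemma exp variance} and read off the representing vector via Proposition~\ref{prop var existance} --- is exactly the paper's approach. Your exploration is more circuitous than necessary, but the computation you eventually settle on is the right one.

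You have also correctly identified a genuine gap that the paper's own proof glosses over. The cross term
\[
-4\,\Re\ip{Tf}{h}\cdot\Re\ip{(T-e_f(T))f}{f} \;=\; -4\,e_f(T)\big(1-\norm{f}^2\big)\,\Re\ip{Tf}{h}
\]
is first order in $h$ and does \emph{not} vanish for general $f$; it vanishes precisely when $\norm{f}=1$ or $e_f(T)=0$. The paper's proof silently drops this term (in passing from the penultimate to the last line of its display), so the stated formula $\frac{\delta v_f(T)}{\delta f}=2(T-e_f(T))^2 f$ is, strictly speaking, only correct on the unit sphere or where $e_f(T)=0$. For arbitrary $f$ the honest variation is $2(T-e_f(T))^2 f - 4e_f(T)(1-\norm f^2)\,Tf$.

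This does not affect the paper's downstream use of the lemma, since it is invoked only through Proposition~\ref{prop uncons exp and var}, where the argument is always a normalized vector $f/\norm f$. Your proposed resolution --- carry out the computation under $\norm f = 1$, and note that $f\in{\rm Dom}(T^2)$ is required so that $(T-e_f(T))^2 f\in\cH$ represents the variation --- is the correct fix; just state the normalization hypothesis explicitly rather than trying to argue the cross term away for general $f$.
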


\begin{proof}
By Definition \ref{var} and the fact $T$ is symmetric, for $f\in {\rm Dom} (T^2)$ 
\[ v_{f}(T) =\ip{(T-e_{f}(T))^2{f}}{f}.\]  
Substituting $f\mapsto f +h$ we get,
\[
\begin{split}
v_{f+h}(T) &=\ip{\big(T-e_{f+h}(T)\big)^2 (f+h)}{f+h}\\
&=\ip{\big(T-e_{f+h}(T)\big)^2f}{f}\\
&\quad+ 2{\rm Re}\ip{\big(T-e_{f+h}(T)\big)^2 f}{h} + o(\norm{h}^2)\\
&=\ip{\Big(T-e_{f}(T)  -2\ip{Tf}{h}_{\cH^{\RR}}\Big)^2f}{f}\\
&\quad+ 2{\rm Re}\ip{\Big(T-e_{f}(T)-2\ip{Tf}{h}_{\cH^{\RR}}\Big)^2 f}{h} + o(\norm{h}^2)\\
&=v_f(T) + 4\ip{Tf}{h}_{\cH^{\RR}}^2 + 2{\rm Re}\ip{\Big(T-e_{f}(T)\Big)^2 f}{h} + o(\norm{h}^2)\\ 
& = v_f(T) + \ip{2\Big(T-e_{f}(T)\Big)^2 f}{h}_{\cH^{\RR}} + o(\norm{h}) . 
\end{split}
\]
 
\end{proof}
From Corollary \ref{lemma cons}, and Lemmas \ref{lemma exp variance}, and \ref{Lemma uncons var},  we directly deduce the following.
\begin{proposition}\label{prop uncons exp and var}
Let $\cH$ be a Hilbert space with realification $\cH^{\RR}$. Let $T$ be a self-adjoint operator in $\cH$, and consider the functionals $f\mapsto e_{\frac{f}{\norm{f}}}(T)$ and $f\mapsto v_{\frac{f}{\norm{f}}}(T)$ defined on ${\rm Dom}(T)\setminus\{0\}$. 
\begin{enumerate}
    \item \label{exp var}
    The variation of  $e_{\frac{f}{\norm{f}}}(T)$ with respect to $\cH^{\RR}$ exists for every $f\in{\rm Dom}(T)\setminus\{0\}$ and
    \[ 
\begin{split}
\frac{\delta e_{\frac{f}{\norm{f}}}(T)}{\delta f} 
&\quad=2T\frac{f}{\norm{f}^2} - 2e_{\frac{f}{\norm{f}}}(T)\frac{f}{\norm{f}^2}
\end{split}
\]
    \item\label{constrained variance var}
    The variation of  $v_{\frac{f}{\norm{f}}}(T)$ with respect to $\cH^{\RR}$ exists for every $f\in{\rm Dom}(T^2)\setminus\{0\}$ and
    \[
\frac{\delta v_{\frac{f}{\norm{f}}}(T)}{\delta f} 
\quad= 2(T-e_{\frac{f}{\norm{f}}}(T))^2\frac{f}{\norm{f}^2} - 2v_{\frac{f}{\norm{f}}}(T)\frac{f}{\norm{f}^2}.
\]
\end{enumerate}
\end{proposition}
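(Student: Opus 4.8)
The plan is to obtain both formulas as immediate consequences of the chain rule in Corollary \ref{lemma cons}, applied with the inner operator the normalization $f \mapsto f/\norm{f}$ and the outer functional $H$ the expected-value functional $y \mapsto e_y(T)$ for part (\ref{exp var}) and the variance functional $y \mapsto v_y(T)$ for part (\ref{constrained variance var}), where throughout the variations are taken in the realification $\cH^{\RR}$. First I would note that multiplication by a nonzero real scalar preserves ${\rm Dom}(T)$ and ${\rm Dom}(T^2)$, so the normalization operator carries ${\rm Dom}(T)\setminus\{0\}$ into ${\rm Dom}(T)$ and ${\rm Dom}(T^2)\setminus\{0\}$ into ${\rm Dom}(T^2)$; in particular $S(f) = H(f/\norm{f})$ is well defined on the stated domains, and Corollary \ref{lemma cons} applies at every nonzero $f$ there, provided $\frac{\delta H}{\delta y}(f/\norm{f})$ exists.

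For part (\ref{exp var}), Lemma \ref{lemma exp variance}, applied in $\cH^{\RR}$, gives $\frac{\delta H}{\delta y}(y) = 2Ty$ on ${\rm Dom}(T)$. Substituting $y = f/\norm{f}$ into the formula of Corollary \ref{lemma cons} yields
\[
\frac{\delta S}{\delta f} = \frac{1}{\norm{f}}\, 2T\frac{f}{\norm{f}} - \ip{2T\tfrac{f}{\norm{f}}}{\tfrac{f}{\norm{f}}}\frac{f}{\norm{f}^2},
\]
and since $\ip{T\tfrac{f}{\norm{f}}}{\tfrac{f}{\norm{f}}} = e_{f/\norm{f}}(T)$ by Definition \ref{def: Observable localization}, this is exactly the claimed expression. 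For part (\ref{constrained variance var}), Lemma \ref{Lemma uncons var} gives $\frac{\delta H}{\delta y}(y) = 2(T - e_y(T))^2 y$, valid for $y \in {\rm Dom}(T^2)$, which forces the hypothesis $f \in {\rm Dom}(T^2)\setminus\{0\}$. Substituting $y = f/\norm{f}$ into Corollary \ref{lemma cons} and using that $T - e_{f/\norm{f}}(T)$ is self-adjoint, the scalar in the rank-one correction term becomes
\[
\ip{2\bigl(T - e_{f/\norm{f}}(T)\bigr)^2 \tfrac{f}{\norm{f}}}{\tfrac{f}{\norm{f}}} = 2\norm{\bigl(T - e_{f/\norm{f}}(T)\bigr)\tfrac{f}{\norm{f}}}^2 = 2 v_{f/\norm{f}}(T),
\]
which gives the stated formula.

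I do not expect a genuine obstacle: the argument is a mechanical substitution into Corollary \ref{lemma cons}, and the earlier results already carry all the analytic content (existence of the variations of $e_y(T)$ and $v_y(T)$ in the realification, self-adjointness bookkeeping, and the $o(\norm{h})$ estimates). The only points needing care are keeping the ambient space the realification throughout, so that Lemmas \ref{lemma exp variance} and \ref{Lemma uncons var} apply verbatim, and tracking the domain hypotheses — part (\ref{exp var}) needs only $f \in {\rm Dom}(T)$, whereas part (\ref{constrained variance var}) needs $f \in {\rm Dom}(T^2)$ because the variance variation $2(T - e)^2 y$ is only defined on ${\rm Dom}(T^2)$.
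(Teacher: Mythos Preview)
Your proposal is correct and follows exactly the approach the paper takes: the paper's proof is the single sentence ``From Corollary \ref{lemma cons}, and Lemmas \ref{lemma exp variance}, and \ref{Lemma uncons var}, we directly deduce the following,'' and you have simply written out the substitution explicitly. Your domain bookkeeping and the identification of the rank-one correction scalar with $2e_{f/\norm{f}}(T)$ and $2v_{f/\norm{f}}(T)$ are accurate.
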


Last, the following lemma is proved similarly to Lemma \ref{Lemma uncons var}. 
\begin{lemma} \label{lemma norm op}
Let $\cH$ be a separable Hilbert space with realification $\cH^{\RR}$, and $T$ a self-adjoint operator in $\cH$ over the domain ${\rm Dom}(T)$. The variation of $\norm{Tf}^2$ with respect to $\cH^{\RR}$ exists at every $f\in {\rm Dom}(T^2)$, and satisfies
\begin{equation}\label{eq norm var}
\frac{\delta \norm{Tf}^2}{\delta f} =  2T^2f. 
\end{equation}
\end{lemma}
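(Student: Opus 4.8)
The plan is to mimic the argument of Lemma \ref{Lemma uncons var}: expand $\norm{T(f+h)}^2$ by bilinearity, move one copy of $T$ across the inner product using self-adjointness, identify the linear term, and absorb the remainder into $o(\norm{h})$, then invoke Proposition \ref{prop var existance}.

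First I would fix $f\in{\rm Dom}(T^2)$ and an arbitrary $h$ in the domain of the functional $f\mapsto\norm{Tf}^2$, which is ${\rm Dom}(T)$. Expanding,
\[
\norm{T(f+h)}^2 = \ip{T(f+h)}{T(f+h)} = \norm{Tf}^2 + 2\Re\ip{Tf}{Th} + \norm{Th}^2 .
\]
Since $f\in{\rm Dom}(T^2)$ we have $Tf\in{\rm Dom}(T)$, so self-adjointness of $T$ gives $\ip{Tf}{Th}=\ip{T^2f}{h}$ for every $h\in{\rm Dom}(T)$. Recalling that the $\cH^{\RR}$ inner product is the real part of the one on $\cH$, this rewrites as
\[
\norm{T(f+h)}^2 = \norm{Tf}^2 + \ip{2T^2f}{h}_{\cH^{\RR}} + \norm{Th}^2 .
\]
It then remains to check $\norm{Th}^2 = o(\norm{h})$ in the sense of Definition \ref{def_o-not}, i.e. along rays: for fixed $h$, $\norm{T(th)}^2 = t^2\norm{Th}^2$, and dividing by $\norm{th}=|t|\,\norm{h}$ this tends to $0$ as $t\to 0$. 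Hence the displayed expansion has exactly the form required by Proposition \ref{prop var existance} with $g=2T^2f$, so the variation of $\norm{Tf}^2$ with respect to $\cH^{\RR}$ exists at $f$ and equals $2T^2f$.

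The only delicate point — and it is mild — is that $T$ is unbounded, so $\norm{Th}^2$ is not $o(\norm{h})$ uniformly; this is precisely why the $o$-notation of Definition \ref{def_o-not} is stated ray-by-ray, and that is all that Proposition \ref{prop var existance} consumes. One should also note that membership $f\in{\rm Dom}(T^2)$ is exactly what is needed both to make $T^2f$ a well-defined vector of $\cH$ and to justify the transfer $\ip{Tf}{Th}=\ip{T^2f}{h}$; the test directions $h$ only need to lie in ${\rm Dom}(T)$, the natural domain of the functional. No further estimates are required, so the proof is a couple of lines once these bookkeeping remarks are in place.
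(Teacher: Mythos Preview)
Your proof is correct and is exactly the argument the paper has in mind: the paper does not spell out a proof but simply says the lemma ``is proved similarly to Lemma \ref{Lemma uncons var}'', and your expansion of $\norm{T(f+h)}^2$, transfer of one factor of $T$ via self-adjointness (using $f\in{\rm Dom}(T^2)$), and ray-wise estimate $\norm{Th}^2=o(\norm{h})$ followed by an appeal to Proposition \ref{prop var existance} is precisely that similar argument made explicit.
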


\section{Optimizing the Uncertainty Minimization}
\label{sec minimization}

In this section we use the  calculus of variation results from Section \ref{calculus of variations}   for finding a local minimum of the Minimization Problem \ref{Min_prob1}.

\subsection{Variations in Windows and Signals}
\label{window vs signal variation}

Some of the terms in the uncertainty $\cL(\hf)$ are defined with the inner product of $\cS$, and some with that of $\cW$. 
It is thus useful to understand the relation between the variation of a functional $S$ with respect to the realificated signal space $\cS^{\RR}$, and its variation with respect to  $\cW^{\RR}$ (see Definition \ref{DefReal}). We denote the variations of $S$ with respect to $\cS^{\RR}$ and $\cW^{\RR}$ by
$\frac{\delta^{\cS} S}{\delta^{\cS} \hf}$ and $\frac{\delta^{\cW} S}{\delta^{\cW} \hf}$ respectively.

\begin{lemma} \label{var as a signal}
Let $S:{\rm Dom} (S)\rightarrow \CC$ be a  functional, where  ${\rm Dom} (S)\cup \{0\}$ is a dense linear subspace of $\cW\cap\cS$. Let $\hf\in {\rm Dom} (S)$. Then, if the variation $\frac{\delta^{\cS} S}{\delta^{\cS} \hf}$ exists at $f$, and satisfies $\w\frac{\delta^{\cS} S}{\delta^{\cS} \hf}\in \cW$, then $\frac{\delta^{\cW} S}{\delta^{\cW} \hf}$ exists at $f$ and 
\[
        \frac{\delta^{\cW} S}{\delta^{\cW} \hf} = \w\frac{\delta^{\cS} S}{\delta^{\cS} \hf}.
\]
\end{lemma}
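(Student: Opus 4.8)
The plan is to unwind both variations from their defining characterizations (Definition \ref{def: Gataux }) and compare the two inner products that appear. Recall that the space $\cW\otimes\cS$-business is irrelevant here; we only need the relation between the $\cS$ and $\cW$ inner products, namely that for functions $g,h$ for which both sides make sense,
\[
\ip{g}{h}_{\cS} = \int_{\RR} g(\w)\overline{h(\w)}\,d\w,
\qquad
\ip{g}{h}_{\cW} = \int_{\RR} g(\w)\overline{h(\w)}\frac{1}{\abs{\w}}\,d\w,
\]
so that $\ip{g}{h}_{\cS} = \ip{\abs{\w}g}{h}_{\cW}$ whenever $\abs{\w}g\in\cW$. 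Passing to the realifications only takes real parts, so the identity $\ip{g}{h}_{\cS^{\RR}} = \ip{\abs{\w}g}{h}_{\cW^{\RR}}$ also holds; this is the one computational fact I will need.

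First I would fix $\hf\in{\rm Dom}(S)$ and use the hypothesis that $\frac{\delta^{\cS} S}{\delta^{\cS}\hf}$ exists: by definition this means $dS(\hf;h) = \ip{\frac{\delta^{\cS} S}{\delta^{\cS}\hf}}{h}_{\cS^{\RR}}$ for every $h\in{\rm Dom}(S)$. Note that the Gateaux differential $dS(\hf;h)$ is defined purely from the limit $\lim_{t\to 0}(S(\hf+th)-S(\hf))/t$ and does not reference any inner product, so it is the \emph{same} number whether we regard $\cH=\cS^{\RR}$ or $\cH=\cW^{\RR}$ (the vector space structure, and hence $\hf+th$, is identical; only the inner product differs). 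This is the conceptual crux: the variation is merely a Riesz representative of a fixed linear functional, and changing the inner product changes the representative in a predictable way.

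Next I would substitute the comparison identity. For any $h\in{\rm Dom}(S)\subseteq\cW\cap\cS$ we get
\[
dS(\hf;h) = \ip{\frac{\delta^{\cS} S}{\delta^{\cS}\hf}}{h}_{\cS^{\RR}}
= \ip{\w\frac{\delta^{\cS} S}{\delta^{\cS}\hf}}{h}_{\cW^{\RR}},
\]
where, strictly, $\w$ abbreviates the multiplication operator $g(\w)\mapsto\w g(\w)$ and the second equality uses $\abs{\w}\cdot(\text{sign})=\w$ absorbed into the real part — more precisely $\ip{g}{h}_{\cS} = \int g\bar h = \int (\w g)\bar h \frac{1}{\w} = \ip{\w g}{h}_{\cW}$, valid because $\frac{1}{\abs{\w}}$ is the $\cW$-weight and the sign of $\w$ cancels between numerator and denominator; taking real parts preserves this. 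By the hypothesis $\w\frac{\delta^{\cS}S}{\delta^{\cS}\hf}\in\cW$, the vector $\w\frac{\delta^{\cS}S}{\delta^{\cS}\hf}$ is a legitimate element of $\cW^{\RR}$, so the displayed identity exhibits it as the Riesz representative of $h\mapsto dS(\hf;h)$ with respect to $\ip{\cdot}{\cdot}_{\cW^{\RR}}$. By Definition \ref{def: Gataux }, this is exactly the statement that $\frac{\delta^{\cW}S}{\delta^{\cW}\hf}$ exists and equals $\w\frac{\delta^{\cS}S}{\delta^{\cS}\hf}$.

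The main obstacle, such as it is, is bookkeeping around the weight and the sign of $\w$: one must be careful that $\frac{1}{\abs{\w}}$ rather than $\frac{1}{\w}$ is the $\cW$-weight, so the correct multiplier is $\w$ (not $\abs{\w}$) precisely because $\w/\abs{\w}=\mathrm{sign}(\w)$ is what is needed to convert $\int\frac{1}{\abs{\w}}$ back to $\int 1\,d\w$ — i.e. $\w\cdot\frac{1}{\abs{\w}} = \mathrm{sign}(\w)$, and pairing against $\bar h$ and taking real parts must be tracked so that no spurious sign is introduced. A secondary point to verify is that density of ${\rm Dom}(S)$ in $\cW\cap\cS$ (hence in $\cW^{\RR}$) guarantees the Riesz representative is unique, so the formula is unambiguous; but this is immediate from the standing hypotheses. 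No genuine analytic difficulty arises because we are handed the existence of $\frac{\delta^{\cS}S}{\delta^{\cS}\hf}$ and only need to transport it.
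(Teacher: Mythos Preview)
Your approach is correct and in fact slightly cleaner than the paper's. Both proofs hinge on the same inner-product conversion $\ip{g}{h}_{\cS^{\RR}} = \ip{\w g}{h}_{\cW^{\RR}}$, but you invoke Definition \ref{def: Gataux } directly: the Gateaux differential $dS(\hf;h)=\lim_{t\to 0}\big(S(\hf+th)-S(\hf)\big)/t$ depends only on the vector-space structure, hence is literally the same number in $\cS^{\RR}$ and in $\cW^{\RR}$, and only its Riesz representative changes with the inner product. The paper instead routes through Proposition \ref{prop var existance} (the $o(\norm{h})$ characterization) and must then check separately that $o(\norm{h}_{\cS}) = o(\norm{h}_{\cW})$ for each fixed $h\in\cW\cap\cS$. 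Your route sidesteps that norm-comparison entirely, which is a genuine (if small) simplification.

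One caution on your final paragraph: the bookkeeping there is internally inconsistent. You correctly compute $\w\cdot\tfrac{1}{|\w|}=\mathrm{sign}(\w)$, but then claim this ``converts $\int\tfrac{1}{|\w|}$ back to $\int 1\,d\w$,'' which it does not; the identity that actually does so is $|\w|\cdot\tfrac{1}{|\w|}=1$, yielding $\ip{g}{h}_{\cS}=\ip{|\w|g}{h}_{\cW}$. The paper's own proof writes the same $\w$ (rather than $|\w|$), so this discrepancy lives in the statement you were handed, not in your method --- but you should drop the attempted justification, since your own arithmetic refutes it.
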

\begin{proof}
From Proposition \ref{prop var existance}, $\frac{\delta^{\cS} S}{\delta^{\cS} \hf}$ exists 
if and only if 
\begin{equation} \label{eq var by S}
S(\hf + \hh) - S(\hf) = {\rm Re}\ip{\frac{\delta^{\cS} S}{\delta^{\cS} \hf}}{\hh}_{\cS}+o(\norm{\hh}_{\cS}),     
\end{equation}
where the $o$ notation is with respect to ${\rm Dom}(S)\ni h \rightarrow 0$. 
Note that for a function $\hg \in\cW\cap\cS$, such that $\w\hg$ is in $\cW$, it holds
\[
\ip{\hg}{\hh}_{\cS^{\RR}} = {\rm Re}\Big(\int_{\RR} \hg(\w)\overline{\hh(\w)} d\w\Big) = \ip{\w \hg}{\hh}_{\cW^{\RR}}.    
\]
Thus, (\ref{eq var by S}) is equivalent to 
\begin{equation}\label{eq var by W}
    S(\hf + \hh) - S(\hf)
    =  {\rm Re}\ip{\w \frac{\delta^{\cS} S}{\delta^{\cS} \hf}}{\hh}_{\cW}+o(\norm{\hh}_{\cS}), 
\end{equation}
which, by Proposition, \ref{prop var existance} is equivalent to the existence of $\frac{\delta^{\cW} S}{\delta^{\cW} \hf}$ and its equality to $\w \frac{\delta^{\cS} S}{\delta^{\cS} \hf}$.   Here, note that $o(\norm{\hh}_{\cS})=o(\norm{\hh}_{\cW})$ for any $h\in\cW\cap\cS$, since the denominator in the o-notation of Definition \ref{def_o-not}.(2) satisfies $\norm{th}_{\cW}\geq \norm{th}_{\cS}\frac{\norm{h}_{\cW}}{\norm{h}_{\cS}}$.
    
 \end{proof}
 The above lemma can be viewed as a formula for computing the variation of the functional $S$ with respect to the window $\hf$, given a formula for its variation with respect to $\hf$ as a signal.

\subsection{The Unconstrained Variation of the Uncertainty}
\label{The unconstrained variation}
In this section, we compute the variations of the functional $\cL$. We present the results under the assumptions that $e_{\hf}(\Tx)=e_{\hf}(\ln(\abs{\w}))=0$, where in the mapping $f\mapsto f+h$ underlying the definition of the variation, $f+h$ is not restricted to lie in the constraint  $e_{\hf}(\Tx)=e_{\hf}(\ln(\abs{\w}))=0$.

We first define a subset of ${\rm Dom}_{\cW\cap\cS}(\cL)$ in which the variation of $\cL$ exists. This is done by combining the requirements from Proposition \ref{prop uncons exp and var} and \ref{var as a signal} applied on the different components of (\ref{cL}).
\begin{definition}
The domain ${\rm Dom}^{\delta}_{\cW\cap\cS}(\cL)\subset \cW\cap\cS$ is defined as the set of all $0\neq \hf\in L^2(\RR)$ such that
\begin{enumerate}
    \item 
    $\hf$ is differentiable in every compact interval with absolutely continuous derivative, and
\[ \frac{\partial^2}{\partial\w^2}\hf\in L^2(\RR)  , \quad \sqrt{\abs{\w}}\frac{\partial^2}{\partial\w^2}\hf\in L^2(\RR) {\rm \quad and} \quad  \Big(\sqrt{\abs{\w}^3}\frac{\partial^2}{\partial\w^2} + \sqrt{\abs{w}}\frac{\partial}{\partial\w}\Big)\hf\in L^2(\RR)\]
\item
$\w\mapsto \frac{1}{\w^2}\hf(\w) \in L^2(\RR)$
\item
$\w\mapsto \sqrt{\abs{\w}}\ln(\abs{\w})^2\hf(\w) \in L^2(\RR)$.
\end{enumerate}
\end{definition}

 \begin{theorem}\label{thm uncos var L}
The variation of the 
functional $\cL$ with respect to $\cW^{\RR}$ exists at every $\hf\in {\rm Dom}^{\delta}_{\cW\cap\cS}(\cL)$, and
for $\hf$ satisfying $e_{\hf}(\Tx)=e_{\hf}\big(\ln(\abs{\w})\big)=0$,
is given by
\[
    \frac{\delta^{\cW} \cL}{\delta^{\cW} \hf} =\frac{\delta^{\cW} }{\delta^{\cW}\hf}\bigg(v_{\ambig}(A)\bigg)+\frac{\delta^{\cW} }{\delta^{\cW}\hf}\bigg(v_{\ambig}(B)\bigg),\]
    where, 
    \begin{equation}
    \begin{split}
    & \frac{\delta^{\cW} }{\delta^{\cW}\hf}\bigg(v_{\ambig}(A)\bigg)(\hf) \\
    & =
    2\w\bigg(\ln(\abs{\w})^2-v^{\cS}_{\frac{\hf}{\norm{\hf}_{\cS}}}\big(\ln(\abs{\w})\big)\bigg)\frac{\hf}{\norm{\hf}_{\cS}^2} \\
    &\quad+ 2\Bigg(\Big(\ln(\abs{\w})-e_{\frac{\hf}{\norm{\hf}_{\cW}}}^{\cW}\big(\ln(\abs{\w})\big)\Big)^2 - v_{\frac{\hf}{\norm{\hf}_{\cW}}}^{\cW}\big(\ln(\abs{\w})\big)\Bigg)\frac{\hf}{\norm{\hf}_{\cW}^2}\\
    \end{split}
    \end{equation}
    and
    \begin{equation}
    \begin{split}
    & \frac{\delta^{\cW} }{\delta^{\cW}\hf}\bigg(v_{\ambig}(B)\bigg)(\hf) \\
    & = 
     2\w\left(-\frac{\partial^2}{\partial\w^2}-v^{\cS}_{\frac{\hf}{\norm{\hf}_{\cS}}}\Big(i\frac{\partial}{\partial\w}\Big)\right)\frac{\hf}{\norm{\hf}_{\cS}^2} \\
      &\quad+ 2\Bigg(-\w^2\frac{\partial^2}{\partial\w^2} -\w\frac{\partial}{\partial\w} -v_{\frac{\hf}{\norm{\hf}_{\cW}}}^{\cW}\Big(i\w\frac{\partial}{\partial\w}\Big)\Bigg)
        \frac{\norm{\frac{1}{w}\hf}_{\cS}^2}{\norm{\hf}^2_{\cS}}   \frac{\hf}{\norm{\hf}_{\cW}^2}\\
    &\quad+
2v_{\frac{\hf}{\norm{\hf}_{\cW}}}^{\cW}\Big(i\w\frac{\partial}{\partial\w}\Big)
\left(\frac{1}{\w}\frac{\hf}{\norm{\hf}_{\cS}^2} - \w\frac{\norm{\frac{1}{w}\hf}_{\cS}^2}{\norm{\hf}^4_{\cS}}\hf \right).
\end{split}    
\end{equation}
\begin{proof}
The formulas are proven directly by applying Proposition \ref{prop uncons exp and var}, Lemma \ref{lemma norm op}, Lemma \ref{var as a signal}, the product rule, and the fact that $e^{\cW}_{\frac{\hf}{\norm{\hf}_{\cW}}}(i\w\frac{\partial}{\partial\w})=0$, which follows from $e^{\cS}_{\frac{\hf}{\norm{\hf}_{\cS}}}(i\frac{\partial}{\partial\w})=0$.   
\end{proof}
\end{theorem}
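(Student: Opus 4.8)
The plan is to exploit the linearity of the variation together with the explicit expression \eqref{cL} for $\cL$: differentiate each of its four summands separately, and convert every intermediate variation that is naturally taken with respect to $\cS^{\RR}$ into one with respect to $\cW^{\RR}$ by multiplying by $\w$, as permitted by Lemma \ref{var as a signal}. Concretely, split $\cL$ into the $A$-part $v^{\cS}_{\hf/\norm{\hf}_{\cS}}\big(\ln(\abs{\w})\big)+v^{\cW}_{\hf/\norm{\hf}_{\cW}}\big(\ln(\abs{\w})\big)$ and the $B$-part $v^{\cS}_{\hf/\norm{\hf}_{\cS}}(\Tx)+v^{\cW}_{\hf/\norm{\hf}_{\cW}}\big(i\w\tfrac{\partial}{\partial\w}\big)\norm{\tfrac{1}{\w}\hf}_{\cS}^2/\norm{\hf}_{\cS}^2$. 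By Propositions \ref{prop:pull_scale} and \ref{prop:pull_time} and the hypotheses $e_{\hf}(\Tx)=e_{\hf}\big(\ln(\abs{\w})\big)=0$ (the latter also forcing $e^{\cW}_{\hf/\norm{\hf}_{\cW}}\big(i\w\tfrac{\partial}{\partial\w}\big)=0$), these two parts equal $v_{\ambig}(A)$ and $v_{\ambig}(B)$, so it suffices to establish the two displayed formulas.

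For the $A$-part, the first summand has the form $v_{g/\norm{g}}(T)$ in the space $\cS$ with $T$ the multiplication operator $\ln(\abs{\w})$, so Proposition \ref{prop uncons exp and var}(2) gives its $\cS^{\RR}$-variation; the constraint $e_{\hf}\big(\ln(\abs{\w})\big)=0$ collapses $(T-e)^2$ to $\ln(\abs{\w})^2$, and multiplying by $\w$ via Lemma \ref{var as a signal} produces the first displayed line. The second summand already lives in $\cW$, so Proposition \ref{prop uncons exp and var}(2) applied in $\cW$ yields the second line verbatim; there no zero-mean hypothesis is available, which is why the shift $\ln(\abs{\w})-e^{\cW}_{\hf/\norm{\hf}_{\cW}}\big(\ln(\abs{\w})\big)$ survives.

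For the $B$-part, using $e^{\cW}_{\hf/\norm{\hf}_{\cW}}\big(i\w\tfrac{\partial}{\partial\w}\big)=0$ the first summand reduces to $v^{\cS}_{\hf/\norm{\hf}_{\cS}}(\Tx)$, whose $\cS^{\RR}$-variation by Proposition \ref{prop uncons exp and var}(2) is $2\big(-\tfrac{\partial^2}{\partial\w^2}-v^{\cS}_{\hf/\norm{\hf}_{\cS}}(\Tx)\big)\tfrac{\hf}{\norm{\hf}_{\cS}^2}$, and Lemma \ref{var as a signal} gives the first line of the $B$-formula. The second summand is a product $g_1(\hf)g_2(\hf)$ with $g_1(\hf)=v^{\cW}_{\hf/\norm{\hf}_{\cW}}\big(i\w\tfrac{\partial}{\partial\w}\big)$ and $g_2(\hf)=\norm{\tfrac{1}{\w}\,\hf/\norm{\hf}_{\cS}}_{\cS}^2$, so I would apply the product rule for variations. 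For $\frac{\delta^{\cW}g_1}{\delta^{\cW}\hf}$ use Proposition \ref{prop uncons exp and var}(2) in $\cW$ with $\big(i\w\tfrac{\partial}{\partial\w}\big)^2=-\w^2\tfrac{\partial^2}{\partial\w^2}-\w\tfrac{\partial}{\partial\w}$, which yields the second line; for $g_2$ write it as $H\big(\hf/\norm{\hf}_{\cS}\big)$ with $H(\hg)=\norm{M\hg}_{\cS}^2$ and $M$ the multiplication by $\tfrac{1}{\w}$, so Lemma \ref{lemma norm op} gives $\tfrac{\delta H}{\delta\hg}=\tfrac{2}{\w^2}\hg$, Corollary \ref{lemma cons} turns this into the $\cS^{\RR}$-variation of $g_2$, and Lemma \ref{var as a signal} into its $\cW^{\RR}$-variation $\tfrac{2}{\w}\tfrac{\hf}{\norm{\hf}_{\cS}^2}-2\w\tfrac{\norm{(1/\w)\hf}_{\cS}^2}{\norm{\hf}_{\cS}^4}\hf$; multiplying by $g_1(\hf)$ gives the third line. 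Summing the three contributions completes the $B$-formula.

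Finally one must check that the domain ${\rm Dom}^{\delta}_{\cW\cap\cS}(\cL)$ is exactly what is needed: conditions (1)--(3) guarantee that $\hf$ lies in the domain of each operator $T^2$ or $M^2$ appearing above, so the existence clauses of Proposition \ref{prop uncons exp and var}, Lemma \ref{lemma norm op} and Corollary \ref{lemma cons} apply, and that every $\cS^{\RR}$-variation produced along the way, once multiplied by $\w$, lands in $\cW$, so Lemma \ref{var as a signal} applies. This membership bookkeeping, rather than any single estimate, is the main obstacle: the proof is otherwise a direct assembly of the cited results, but one has to keep the two Hilbert-space structures $\cS^{\RR}$ and $\cW^{\RR}$ rigorously separated and insert the multiplication-by-$\w$ conversion at precisely the right steps, and the nested (norm-squared, then normalization) differentiation behind $g_2$ is the one place where a little extra care is required.
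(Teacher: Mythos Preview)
Your proposal is correct and follows exactly the approach the paper sketches: apply Proposition~\ref{prop uncons exp and var} and Lemma~\ref{lemma norm op} to each summand of \eqref{cL}, convert $\cS^{\RR}$-variations to $\cW^{\RR}$-variations via Lemma~\ref{var as a signal}, and use the product rule on the fourth term. One small slip: it is the \emph{former} hypothesis $e_{\hf}(\Tx)=0$ (not ``the latter'') that forces $e^{\cW}_{\hf/\norm{\hf}_{\cW}}\big(i\w\tfrac{\partial}{\partial\w}\big)=0$, as the paper itself states.
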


\subsection{Variations Under Expected Value Constraints}
\label{Adding the constraints0}

Our next goal is to add the constraints $e^{\cS}_{\hf}(\ln(\abs{\w}))= e^{\cS}_{\hf}(i\frac{\partial}{\partial\w})=0$ to the variational analysis.
In the previous section, we calculated the unconstrained variations of $\cL$. Informally, adding the constraints to the variation involves 
formulating the Lagrange multipliers, namely, projecting the unconstrained variation $\frac{\delta S}{\delta \hf}$ to the tangent space of the constraint surface. Intuitively,  taking an infinitesimal step along the constrained variation preserves the constraints.

The general Lagrange multipliers formulation is given by the following definition and theorem.

\begin{definition}
Let $S$, $c_1$ and $c_2$ be densely defined functionals in the Hilbert space $\cH$. Let $f$ be in the domains of $S$, $c_1$ and $c_2$, and suppose that the variations 
of $S$, $c_1$ and $c_2$ exist at $f$, and that $c_1(f)=c_2(f)=0$.
The \emph{constrained variation} of $S$ at $f$, under the constraints $c_1(f)=c_2(f)=0$, is defined to be
\begin{equation}\label{eq def cons}
\frac{\delta S}{\delta f}^{\emph{cons}}:= 
\frac{\delta S}{\delta f} - \lambda \frac{\delta c_1}{\delta f} -\mu \frac{\delta c_2}{\delta f},
\end{equation}    
where $\lambda$ and $\mu$, called the \emph{Lagrange multipliers} associated to the constraints, are scalars that satisfy
\begin{equation}
\label{eq_lag}
\ip{\frac{\delta S}{\delta f}^{\emph{cons}}}{\frac{\delta c_1}{\delta f}}_{\cH}=0, \quad \ip{\frac{\delta S}{\delta f}^{\emph{cons}}}{\frac{\delta c_2}{\delta f}}_{\cH}=0.
\end{equation}
\end{definition}

In the above definition, if $\frac{\delta c_1}{\delta f}$ and $\frac{\delta c_2}{\delta f}$ are not co-linear, equation (\ref{eq_lag}) has a unique solution. If $\frac{\delta c_1}{\delta f}$ and $\frac{\delta c_2}{\delta f}$ are co-linear, then $\l$ and $\mu$ are not unique, but $\frac{\delta S}{\delta f}^{\emph{cons}}$ is. Also, note that $\l$ and $\mu$ depend on $f$, as $\frac{\delta S}{\delta f}$, $\frac{\delta c_1}{\delta f}$ and $\frac{\delta c_2}{\delta f}$ do.
One motivation for the above definition comes from the following extreme value theorem, which is restricted to expected value constraints. For the theorem, given self-adjoint operators $T_1$ and $T_2$ with domains ${\rm Dom}(T_1)$ and ${\rm Dom}(T_2)$ respectively, we denote by ${\rm Dom}(T_1T_2)$ the set of all $f\in {\rm Dom}(T_2)$ such that $T_2f\in{\rm Dom}(T_1)$.

\begin{theorem}[Fermat's Extreme Value Theorem Under Constraints]
\label{fermat2}
Let $S$ be a densely defined functional in the real Hilbert space $\cH$. Let $T_1$ and $T_2$ be self-adjoint operators in $\cH$, and consider the expected value constraints 
$e_f(T_1)=e_f(T_2)=0$.
Denote the domains of $S$, $T_1$ and $T_2$ respectively by ${\rm Dom}(S)$, ${\rm Dom}(T_1)$ and ${\rm Dom}(T_2)$.
 Let $f_0\in {\rm Dom}(S)\cap {\rm Dom}(T_1^2)\cap {\rm Dom}(T_1 T_2)\cap {\rm Dom}(T_2^2)$ and suppose that the variation of $S$ exists at $f_0$, $e_{f_0}(T_1)=e_{f_0}(T_2)=0$, and $T_1f_0$ and $T_2f_0$ are not co-linear.
 
 If $S$ attains a local minimum in the domain $e_f(T_1)=e_f(T_2)=0$ at $f_0$, then there exist scalars $\lambda$ and $\mu$ such that 
 \begin{equation}
 \label{eq_LagMult}
     \frac{\delta S}{\delta f}(f_0) - \lambda T_1 f_0 -\mu T_2 f_0 = 0.
 \end{equation}
\end{theorem}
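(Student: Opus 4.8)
The plan is to prove the theorem as a Lagrange–multiplier statement, by the classical device of showing that the variation $g:=\frac{\delta S}{\delta f}(f_0)$ annihilates the ``tangent directions'' of the constraint set at $f_0$, which here form a subspace of codimension two. By Lemma~\ref{lemma exp variance}, the constraint functionals $c_i:=e_{(\cdot)}(T_i)$ have variations $\frac{\delta c_i}{\delta f}(f_0)=2T_if_0$ at $f_0$, and by hypothesis these two vectors are linearly independent. Set $V:={\rm span}\{T_1f_0,T_2f_0\}$, a closed (finite-dimensional) subspace. The whole proof reduces to the claim that $\langle g,h\rangle_{\cH}=0$ for every $h$ in a dense subset of $V^{\perp}$: granting this, $g\in(V^{\perp})^{\perp}=V$, so $g=\lambda T_1f_0+\mu T_2f_0$ for some scalars $\lambda,\mu$, which is exactly \eqref{eq_LagMult}.

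First I would fix the dense subspace $D_0:={\rm Dom}(S)\cap{\rm Dom}(T_1)\cap{\rm Dom}(T_2)$, in which $S$ and both constraints are simultaneously meaningful, and, using the linear independence of $T_1f_0,T_2f_0$ and the density of $D_0$, choose $u_1,u_2\in D_0$ for which the $2\times2$ matrix $M=\big(\langle u_j,T_if_0\rangle\big)_{i,j}$ is invertible. Given a direction $h\in D_0$ with $\langle T_1f_0,h\rangle=\langle T_2f_0,h\rangle=0$, I would then build a curve through $f_0$ that stays on the constraint set and has velocity $h$. Take the ansatz $\gamma(t):=f_0+th+\alpha(t)u_1+\beta(t)u_2$ and consider the real quadratic polynomials $F_i(t,\alpha,\beta):=c_i\big(f_0+th+\alpha u_1+\beta u_2\big)$, which vanish at the origin. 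The Jacobian of $(F_1,F_2)$ in $(\alpha,\beta)$ at the origin is $2M$, hence invertible, so the implicit function theorem yields $C^1$ functions $\alpha(t),\beta(t)$ with $\alpha(0)=\beta(0)=0$ solving $F_1\equiv F_2\equiv0$ near $t=0$. Differentiating this identity at $t=0$, using $\partial_tF_i(0,0,0)=2\langle T_if_0,h\rangle=0$ and the invertibility of $2M$, gives $\alpha'(0)=\beta'(0)=0$; hence $\gamma$ is differentiable at $0$ with $\gamma'(0)=h$ and $\gamma(t)-f_0=th+o(t)$.

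With the curve in hand, $\gamma(t)\in{\rm Dom}(S)$ lies in the constraint set for $t$ near $0$, so $t\mapsto S(\gamma(t))$ attains a local minimum at $t=0$. Since the variation of $S$ exists at $f_0$, Proposition~\ref{prop var existance}, applied with the increment $\gamma(t)-f_0\in{\rm Dom}(S)$ tending to $0$, gives
\[
S(\gamma(t))-S(f_0)=\langle g,\gamma(t)-f_0\rangle_{\cH}+o\big(\norm{\gamma(t)-f_0}\big)=t\,\langle g,h\rangle_{\cH}+o(t),
\]
so matching the one-sided derivatives at $t=0$ forces $\langle g,h\rangle_{\cH}=0$. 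Thus $g\perp N$, where $N:=\{h\in D_0:\langle T_1f_0,h\rangle=\langle T_2f_0,h\rangle=0\}$; since $V^{\perp}$ has codimension two and $D_0$ is dense, a short correction argument (from a dense sequence in $V^{\perp}$, subtract a combination of two fixed vectors of $D_0$ chosen to cancel the two linear conditions) shows $N$ is dense in $V^{\perp}$, and the claim of the first paragraph follows.

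The step I expect to be the main obstacle is the careful bookkeeping in the curve construction: verifying that $\partial_tF_i$ and $\partial_\alpha F_i,\partial_\beta F_i$ are computed correctly via self-adjointness of $T_i$ on the prescribed domains ${\rm Dom}(T_1^2)$, ${\rm Dom}(T_1T_2)$, ${\rm Dom}(T_2^2)$ (so that all inner products such as $\langle T_iu_j,f_0\rangle$ and $\langle T_iT_jf_0,f_0\rangle$ are legitimate), that $\gamma(t)-f_0$ genuinely remains in ${\rm Dom}(S)$ and is $o(1)$ so Proposition~\ref{prop var existance} applies, and that $D_0$ is dense in $\cH$ (which holds in the situations of interest, where ${\rm Dom}(S)\subseteq{\rm Dom}(T_1)\cap{\rm Dom}(T_2)$); everything else is the routine finite-dimensional implicit function theorem together with the density argument.
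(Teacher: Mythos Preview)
Your argument is correct and takes a genuinely different route from the paper. The paper (sketching only the single-constraint case) argues by contradiction: if $\frac{\delta S}{\delta f}(f_0)$ is not a multiple of $Tf_0$, it writes down an explicit competitor $q=f_0-\epsilon\frac{\delta S}{\delta f}+(\lambda(\epsilon)+k)\,2Tf_0$, choosing the projection coefficient $\lambda(\epsilon)$ by hand and then solving an explicit quadratic in $k$ to force $e_q(T)=0$, and finally checks $S(q)<S(f_0)$. You instead prove directly that $g:=\frac{\delta S}{\delta f}(f_0)$ annihilates every tangent direction, by using the implicit function theorem on the polynomial restriction of the constraints to a finite-dimensional slice to manufacture curves in the constraint set with prescribed velocity $h$, and then appeal to the density of such $h$ in $V^\perp$. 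Your approach is cleaner conceptually and treats both constraints uniformly; the paper's is more elementary (no implicit function theorem, just solving a quadratic) but is only written out for one constraint with the two-constraint case left to the reader. Both proofs share the same delicate step you flag at the end: invoking Proposition~\ref{prop var existance} along a perturbation that is not a fixed ray (your $\gamma(t)-f_0=th+o(t)$; the paper's $q-f_0$ with its $k=O(\epsilon^2)$ correction), which the purely radial $o$-notation of Definition~\ref{def_o-not} does not literally license---so your caution there is well placed, and no worse than the paper's own usage.
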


It is trivial to see that $\l$ and $\mu$ in the above theorem are Lagrange multipliers (satisfying (\ref{eq_lag})).
In the following, we offer a proof of the one constraint counterpart of Theorem \ref{fermat2}. Namely, under the constraint $e_f(T)=0$, we show that any local minimum point of $S$ at $f$, where $f\in{\rm Dom}(S)\cap{\rm Dom}(T^2)$, $Tf\neq 0$, and where $\frac{\partial S}{\partial f}(f)$ exists, must satisfy
\begin{equation}
\label{eq_temp3}
   \frac{\delta S}{\delta f}(f) - \lambda T f = 0 
\end{equation}
for some $\lambda$. The general case is shown similarly.

\begin{proof}
We denote in short the expected value functional by $e:f\mapsto e_f(T)$, and note that $\frac{\delta}{\delta f}e(f)$ exists and is equal to $2Tf$ by Lemma \ref{lemma exp variance}.

Let $f$ be as in the theorem.
Suppose that there is no $\l$ that satisfies (\ref{eq_temp3}). In this proof, we build a new point $q\neq f$ on the constraint $e_q(T)=0$ for which $S(q)<S(f)$, which contradicts the assumption.
In the following, we construct such a $q$ of the form
\begin{equation}
\label{eq_temp4}
q=f-\e\frac{\delta }{\delta f}S +\big(\l(\e)+k\big)2Tf,
\end{equation}
with
\[
    \l(\e) = \frac{\e \ip{\frac{\delta }{\delta f}S}{2Tf}}{\norm{2Tf}^2} 
\]
for small enough nonzero $\e$ and $k=o(\e)$. 
Here, indeed $q\neq f$, since
(\ref{eq_temp4}) has $q-f$ of the form $\e\big(\frac{\delta S}{\delta f} + \l' \frac{\delta e}{\delta f}\big)$, which must be nonzero by assumption.

First, we show the existence of $k$ such that $q$ is on the constraint $e(q)=0$.
\begin{equation}
\label{eq_temp5}
\begin{split}
   e(q) = & \ip{T\Big(f-\e\frac{\delta }{\delta f}S +\big(\l(\e)+k\big)2Tf \Big)}{f-\e\frac{\delta }{\delta f}S +\big(\l(\e)+k\big)2Tf }\\
   = & e(f) + 2\ip{Tf}{-\e\frac{\delta }{\delta f}S +\big(\l(\e)+k\big)2Tf } \\
    & +\ip{T\Big(-\e\frac{\delta }{\delta f}S +\big(\l(\e)+k\big)2Tf \Big)}{-\e\frac{\delta }{\delta f}S +\big(\l(\e)+k\big)2Tf }.
\end{split}
\end{equation}
Let us compute the different terms in the right-hand-side of (\ref{eq_temp5}).
\begin{equation}
    \label{eq_temp7}
    \begin{split}
     & 2\ip{Tf}{-\e\frac{\delta }{\delta f}S +\big(\l(\e)+k\big)2Tf } \\
     & \quad=2\ip{Tf}{-\e\frac{\delta }{\delta f}S +\frac{\e \ip{\frac{\delta }{\delta f}S}{2Tf}}{\norm{2Tf}^2}2Tf }  +2\ip{Tf}{k2Tf }\\
     & \quad= 4k\ip{Tf}{Tf }
    \end{split}
\end{equation}
Now, the second term of (\ref{eq_temp5}) satisfies
\begin{equation}
    \label{eq_temp8}
    \begin{split}
      &  \ip{T\Big(-\e\frac{\delta }{\delta f}S +\big(\l(\e)+k\big)2Tf \Big)}{-\e\frac{\delta }{\delta f}S +\big(\l(\e)+k\big)2Tf } \\
      & \quad =\ip{T\Big(-\e\frac{\delta }{\delta f}S +\l(\e)2Tf \Big)}{-\e\frac{\delta }{\delta f}S +\l(\e)2Tf } \\
      &  \quad \quad +2\ip{T\Big(-\e\frac{\delta }{\delta f}S +\l(\e)2Tf \Big)}{ 2kTf }+\ip{2kT^2f }{2kTf }.
    \end{split}
\end{equation}
Thus, combining (\ref{eq_temp7}) with (\ref{eq_temp8}), we can write
\begin{equation}
    \label{eq_temp9}
    e(q)=ak^2 +(b+\e c) k +d\e^2.
\end{equation}
where $a,b$ and $c$ are  constants that do not depend on $\e$. By the fact that 
 $f$ is not in the kernel of $T$, we must have $b\neq 0$. 
For small enough $\e$, there is hence a solution $k$ to the equation $ e(q)=ak^2 +(b+\e c) k +d\e^2=0$ with $k=O(\e^2)$.
Hence $q$ satisfies the constraint $e(q)=0$ for any small enough $\e$ with a corresponding choice of $k=o(\e)$.

We next show that $S(q)<S(f)$ for some choice of $\e$. For small enough $\e$, the $k$ term in the definitoin (\ref{eq_temp4}) of $q$ cannot cancel the $\e$ and $\l(\e)$ terms, since $k=o(\e)$. We hence we get, by PRoposition \ref{prop var existance},
\[S(q)= S(f) + \ip{\frac{\delta}{\delta f}S}{-\e\frac{\delta }{\delta f}S +\l(\e)2Tf} + \ip{\frac{\delta}{\delta f}S}{k2Tf} + o(\e), \]
where first term does not vanish by the non-colinearity assumption of $Tf$ and $\frac{\delta}{\delta f}S(f)$, and the second and third terms can be made small enough relative to the first term by choosing small enough $\e$. Hence, there exists a small as we wish $\e$ such that $S(q)<S(f)$ and $e(q=0)$, so $f$ is not a local minimum.

\end{proof}

Similarly to the above proof, we can also show the following. Given a point on the constraint, by taking a small enough step in the direction of $\frac{\delta S}{\delta f}^{\emph{cons}}$, and projecting the result to the constraint $e_{f}(T_1)=e_f(T_2)=0$, we can decrease the value of the functional $S$, unless $\frac{\delta S}{\delta f}^{\emph{cons}}=0$. This is the basis of the gradient descent approach for finding local minima.

\subsection{The Variation of the Uncertainty Under the Expected Value Constraints}
\label{Adding the constraints}

Next, we give an explicit formula for $\frac{\delta \cL}{\delta \hf}^{\emph{cons}}$. This is a direct result of Lemma \ref{lemma exp variance} and (\ref{eq_lag}).
\begin{proposition}
The constrained variation
of $\cL$ with respect to $\cW^{\RR}$ 
exists at every $\hf\in {\rm Dom}^{\delta}_{\cW\cap\cS}(\cL)$, and satisfies
\begin{equation}\label{eq constrained}
\frac{\delta^{\cW} \cL}{\delta^{\cW} \hf}^{\emph{cons}}=
\frac{\delta^{\cW} \cL}{\delta^{\cW} \hf} -  \lambda \ln(\abs{\w})\hf - \mu i\frac{\partial}{\partial\w}\hf,
\end{equation}
where the Lagrange multipliers $\mu$ and $\lambda$ satisfy
\begin{equation}\label{eq lagrange mul}
\begin{split}
& \begin{pmatrix}
\lambda\\
\mu
\end{pmatrix} \\
& =
\begin{pmatrix}
\norm{\ln(\abs{\w})\hf}^2_{\cW}& {\rm Re}\ip{\Tx\hf}{\ln(\abs{\w})\hf}_{\cW}\\
{\rm Re}\ip{\ln(\abs{\w})\hf}{\Tx\hf}_{\cW}&\norm{\Tx\hf}^2_{\cW}
\end{pmatrix}^{-1}
\begin{pmatrix}
{\rm Re}\ip{\frac{\delta^{\cW}\cL}{\delta^{\cW}\hf}}{\ln(\abs{\w})\hf}_{\cW}\\
{\rm Re}\ip{\frac{\delta^{\cW}\cL}{\delta^{\cW}\hf}}{\Tx\hf}_{\cW}
\end{pmatrix}
\end{split}
\end{equation}
\end{proposition}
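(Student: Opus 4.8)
The plan is to read the formula straight off the definition~(\ref{eq def cons}) of the constrained variation: identify the variations of the two constraint functionals via Lemma~\ref{lemma exp variance}, substitute them into~(\ref{eq def cons}), and then solve the defining orthogonality system~(\ref{eq_lag}) for the two Lagrange multipliers by elementary linear algebra.

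First I would record the two constraints of Minimization Problem~\ref{Min_prob1} as functionals, $c_1(\hf)=e_{\hf}(\bT_\sigma)$ and $c_2(\hf)=e_{\hf}(\bT_x)$, and compute their variations in the realification with respect to which we differentiate. Since $\bT_\sigma$ is multiplication by $-\ln(\abs{\w})$ it is self-adjoint in $\cW$, so Lemma~\ref{lemma exp variance} gives $\frac{\delta^{\cW} c_1}{\delta^{\cW}\hf}=2\bT_\sigma\hf$, a real scalar multiple of $\ln(\abs{\w})\hf$; for $c_2$ one applies Lemma~\ref{lemma exp variance} together with Lemma~\ref{var as a signal} (to pass from the signal-space gradient to the window-space gradient) and obtains a scalar multiple of $\Tx\hf$. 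Because the constraint values are identically zero, the real scalars $\lambda,\mu$ in~(\ref{eq def cons}) can absorb these constant factors and signs, giving exactly the claimed shape
\[
\frac{\delta^{\cW}\cL}{\delta^{\cW}\hf}^{\emph{cons}}
=\frac{\delta^{\cW}\cL}{\delta^{\cW}\hf}-\lambda\,\ln(\abs{\w})\hf-\mu\,\Tx\hf .
\]
Existence of every ingredient at a point $\hf\in{\rm Dom}^{\delta}_{\cW\cap\cS}(\cL)$ is then immediate: $\frac{\delta^{\cW}\cL}{\delta^{\cW}\hf}$ exists there by Theorem~\ref{thm uncos var L}, while the membership conditions defining ${\rm Dom}^{\delta}_{\cW\cap\cS}(\cL)$ (and those of ${\rm Dom}_{\cW\cap\cS}(\cL)$) force $\ln(\abs{\w})\hf\in\cW$ and $\Tx\hf\in\cW$, so all the inner products appearing below are finite.

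It remains to pin down $\lambda$ and $\mu$ from the orthogonality relations~(\ref{eq_lag}), namely $\ip{\frac{\delta^{\cW}\cL}{\delta^{\cW}\hf}^{\emph{cons}}}{\ln(\abs{\w})\hf}_{\cW^{\RR}}=0$ and $\ip{\frac{\delta^{\cW}\cL}{\delta^{\cW}\hf}^{\emph{cons}}}{\Tx\hf}_{\cW^{\RR}}=0$. Substituting the expression above and using that $\lambda,\mu\in\RR$ and $\ip{\cdot}{\cdot\cdot}_{\cW^{\RR}}={\rm Re}\,\ip{\cdot}{\cdot\cdot}_{\cW}$ (Definition~\ref{DefReal}), these two scalar identities become the real $2\times2$ linear system whose coefficient matrix is the $\cW^{\RR}$-Gram matrix of the pair $\{\ln(\abs{\w})\hf,\ \Tx\hf\}$ and whose right-hand side has entries ${\rm Re}\ip{\frac{\delta^{\cW}\cL}{\delta^{\cW}\hf}}{\ln(\abs{\w})\hf}_{\cW}$ and ${\rm Re}\ip{\frac{\delta^{\cW}\cL}{\delta^{\cW}\hf}}{\Tx\hf}_{\cW}$. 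Inverting this Gram matrix — which is legitimate precisely when $\ln(\abs{\w})\hf$ and $\Tx\hf$ are not co-linear, the same non-degeneracy condition appearing in Theorem~\ref{fermat2} — yields formula~(\ref{eq lagrange mul}).

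The step I expect to require the most care is the first one: being explicit about the realified Hilbert space in which each variation is taken, checking that the operator actually playing the role of $\bT_x$ in the window-space computation is the self-adjoint one to which Lemma~\ref{lemma exp variance} applies, and tracking how the constant factor $2$ (and any weight that arises when converting a $\cS^{\RR}$-variation to a $\cW^{\RR}$-variation via Lemma~\ref{var as a signal}) gets folded into $\lambda$ and $\mu$. Once the two constraint gradients are identified as scalar multiples of $\ln(\abs{\w})\hf$ and $\Tx\hf$, the remainder is the routine linear algebra of projecting $\frac{\delta^{\cW}\cL}{\delta^{\cW}\hf}$ onto the orthogonal complement of their span.
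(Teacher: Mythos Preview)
Your proposal is correct and follows essentially the same approach as the paper, whose entire proof reads ``This is a direct result of Lemma~\ref{lemma exp variance} and (\ref{eq_lag}).'' You have simply unpacked that sentence: compute the variations of the two expected-value constraints via Lemma~\ref{lemma exp variance}, plug into the definition~(\ref{eq def cons}), and solve the orthogonality conditions~(\ref{eq_lag}) as a $2\times 2$ linear system. Your caution about tracking which realified space the constraint variations live in (and the possible intervention of Lemma~\ref{var as a signal}) is well placed and in fact more explicit than the paper itself.
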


In the special case, where $\hf$ is real valued,  formula (\ref{eq lagrange mul}) can be simplified. 
\begin{corollary}\label{prop real valued constrained} The constrained variation
of $\cL$ with respect to $\cW^{\RR}$ at a real valued $\hf\in {\rm Dom}^{\delta}_{\cW\cap\cS}(\cL)$
is given by
\begin{equation}\label{eq L cons var real}
\frac{\delta^{\cW} \cL}{\delta^{\cW} \hf}^{\emph{cons}}=
\frac{\delta^{\cW} \cL}{\delta^{\cW} \hf} - \frac{{\rm Re}\ip{\frac{\delta^{\cW} \cL}{\delta^{\cW} \hf}}{\ln(\abs{\w})\hf}_{\cW}}{\norm{\ln(\abs{\w}) \hf}_{\cW}^2}\ln(\abs{\w})\hf.
\end{equation}
In particular, $\frac{\delta^{\cW}\cL}{\delta^{\cW} \hf}^{\emph{cons}}$ is real valued as well. 
\end{corollary}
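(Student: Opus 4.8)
The plan is to start from the general Lagrange-multiplier formula (\ref{eq lagrange mul}) of the preceding proposition and show that, when $\hf$ is real valued, the $2\times2$ linear system there decouples and forces $\mu=0$. The whole argument rests on one reality observation together with a short term-by-term inspection of Theorem \ref{thm uncos var L}. First, if $\hf$ is real valued then $\ln(\abs{\w})\hf$ is real valued, whereas $\Tx\hf=i\hf'$ is purely imaginary valued (the derivative of a real function is real). Second, each summand in the explicit formula for $\frac{\delta^{\cW}\cL}{\delta^{\cW}\hf}$ provided by Theorem \ref{thm uncos var L} has the shape of a real-coefficient differential operator applied to $\hf$, multiplied by real scalars --- the variances $v^{\cS}_{\cdots}$, $v^{\cW}_{\cdots}$, the expected value $e^{\cW}_{\cdots}\big(\ln(\abs{\w})\big)$, and the norm ratios. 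In particular, $i\frac{\partial}{\partial\w}$ never appears bare but only through $v^{\cS}_{\cdots}(i\frac{\partial}{\partial\w})$ (a nonnegative real) or inside $-\frac{\partial^2}{\partial\w^2}$, and likewise $i\w\frac{\partial}{\partial\w}$ only through $v^{\cW}_{\cdots}(i\w\frac{\partial}{\partial\w})$ or inside $-\w^2\frac{\partial^2}{\partial\w^2}-\w\frac{\partial}{\partial\w}$. Hence $\frac{\delta^{\cW}\cL}{\delta^{\cW}\hf}$ is real valued whenever $\hf$ is.

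Next I would use the fact that in $\cW^{\RR}$ one has $\ip{u}{v}_{\cW^{\RR}}={\rm Re}\big(\int u\overline{v}\,\frac{1}{\abs{\w}}d\w\big)$, so that pairing a real valued function against a purely imaginary valued one gives a purely imaginary integral, hence a vanishing real part. Applying this with the pair $(\Tx\hf,\ \ln(\abs{\w})\hf)$ shows that the off-diagonal entries of the $2\times2$ matrix in (\ref{eq lagrange mul}) vanish, and applying it with $(\frac{\delta^{\cW}\cL}{\delta^{\cW}\hf},\ \Tx\hf)$ shows that the second component of the column vector on the right vanishes. The matrix is therefore diagonal, with diagonal entries $\norm{\ln(\abs{\w})\hf}_{\cW}^2$ and $\norm{\Tx\hf}_{\cW}^2$, both strictly positive when $\hf\neq0$ (if $\ln(\abs{\w})\hf=0$ a.e.\ then $\hf=0$; if $\hf'=0$ a.e.\ then $\hf$ is a constant in $L^2(\RR)$, hence $0$), so it is invertible and (\ref{eq lagrange mul}) yields $\mu=0$ together with $\lambda={\rm Re}\ip{\frac{\delta^{\cW}\cL}{\delta^{\cW}\hf}}{\ln(\abs{\w})\hf}_{\cW}\big/\norm{\ln(\abs{\w})\hf}_{\cW}^2$. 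Substituting $\mu=0$ and this value of $\lambda$ into (\ref{eq constrained}) reproduces (\ref{eq L cons var real}). For the last assertion, the subtracted term is the real scalar $\lambda$ times the real valued function $\ln(\abs{\w})\hf$, hence real valued, and $\frac{\delta^{\cW}\cL}{\delta^{\cW}\hf}$ is real valued by the first paragraph, so $\frac{\delta^{\cW}\cL}{\delta^{\cW}\hf}^{\emph{cons}}$ is real valued as claimed.

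The only step that needs genuine care, and which I regard as the main (and rather minor) obstacle, is the term-by-term verification that $\frac{\delta^{\cW}\cL}{\delta^{\cW}\hf}$ picks up no surviving factor of $i$ at a real $\hf$: this amounts to running through the four summands of $\frac{\delta^{\cW}}{\delta^{\cW}\hf}\big(v(A)\big)$ and the several summands of $\frac{\delta^{\cW}}{\delta^{\cW}\hf}\big(v(B)\big)$ in Theorem \ref{thm uncos var L} and checking, as indicated above, that each reduces to a real-coefficient operator applied to $\hf$ scaled by a real number. No analytic difficulty arises: existence of all the variations involved, and of the constrained variation, has already been established in the preceding propositions, so once the reality bookkeeping is done the corollary follows by direct substitution into (\ref{eq lagrange mul}).
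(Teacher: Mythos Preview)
Your proposal is correct and follows essentially the same approach as the paper: observe that for real valued $\hf$ the vector $\frac{\delta^{\cW}\cL}{\delta^{\cW}\hf}$ and $\ln(\abs{\w})\hf$ are real valued while $\Tx\hf$ is purely imaginary, deduce that the off-diagonal entries of the matrix and the second component of the right-hand side in (\ref{eq lagrange mul}) vanish, and read off $\mu=0$ and the stated $\lambda$. Your write-up is in fact more explicit than the paper's, which simply invokes Theorem \ref{thm uncos var L} for the reality of $\frac{\delta^{\cW}\cL}{\delta^{\cW}\hf}$ without the term-by-term check and does not pause over the invertibility of the matrix.
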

\begin{proof}
By Theorem \ref{thm uncos var L}, for a real valued $\hf$, both $\frac{\delta\cL}{\delta\hf}$ and $\ln(\abs{\w})\hf$ are real valued. In addition, $\Tx\hf$ takes values in $i\RR$. 
Thus, 
\[{\rm Re}\ip{\Tx\hf}{\ln(\abs{\w})\hf}_{\cW}={\rm Re}\ip{\frac{\delta\cL}{\delta\hf}}{\Tx\hf}_{\cW}=0.\]
As a result, by (\ref{eq lagrange mul}),
$\mu = 0$, and $\l=\frac{{\rm Re}\ip{\frac{\delta^{\cW} \cL}{\delta^{\cW} \hf}}{\ln(\abs{\w})\hf}_{\cW}}{\norm{\ln(\abs{\w}) \hf}_{\cW}^2}$.   
\end{proof}

\section{Numerical Results}\label{sec implementation}

We illustrate the uncertainty minimization problem numerically by implementing a discrete gradient descent algorithm. The scheme is based on discretizing the frequency line on a grid, and  implementating a discrete version of the constrained variation of Corollary \ref{prop real valued constrained}.
 The derivatives are discretized via central difference.  
We initialize the window as a Gaussian $\hat{f}_0$, translated in the $y$ axis to satisfy $\hat{f}_0(0)=0$, and zeroed out for negative values. The initial $f_0$ is chosen with expected values of time and scale equal to 0. We call such an $f_0$ a \emph{truncated Gaussian}.

 We choose the variance of the initial Gaussian $\hat{f}_0$ optimally -- to minimize the uncertainty over the family of truncated Gaussians. This means that the optimization process only changes the shape of the Gaussian, and not its first and second moments. In Figure, \ref{fig:initial_window} we show the initial condition, and in Figure \ref{fig:final_window}, the optimal window. The uncertainty 11.427 of the initial condition improves to 8.0619 in the optimal window (see Figure \ref{fig:uncertainty_window} ). Note that the shape of the optimal window is similar to the shape of the initial condition. Hence, from a signal-processing/feature-extraction point of view, both windows are reasonable and roughly measure the same features. However, the shape of the uncertainty minimizing window is optimized for best phase space localization.

\begin{figure*}
\centering
  \includegraphics[width=0.71\textwidth]{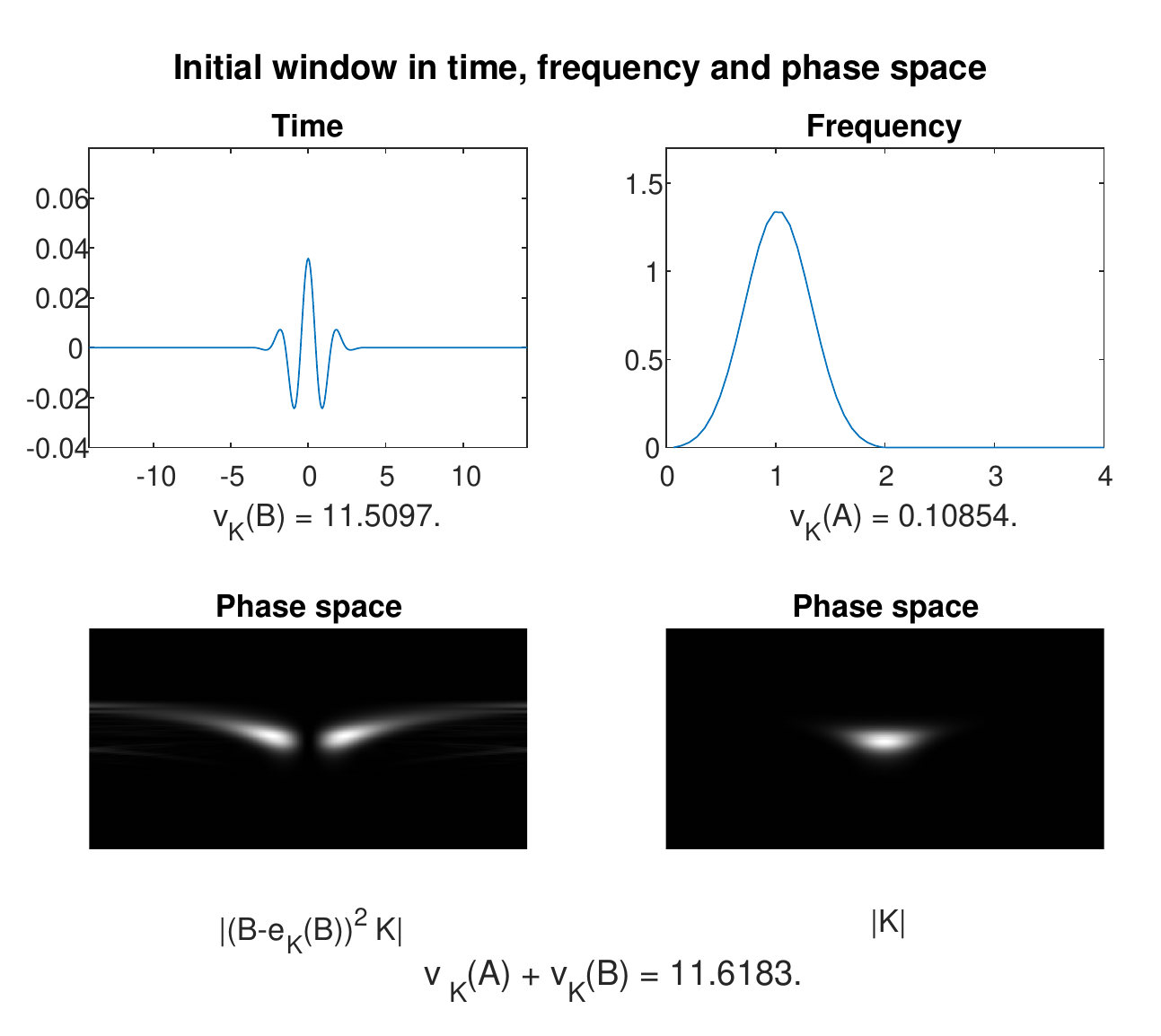}
\caption{\textbf{Initial window} in time, frequency, and phase space. To make the localization more visible in phase space, we show  $|(B-e_K(B)^2K(a,b,c)|$ in addition to  $|K(a,b,c)|$
, where $K:=V_f(f)$ is the ambiguity function.} 
\label{fig:initial_window}     
\end{figure*}
\begin{figure*}
\centering
  \includegraphics[width=0.71\textwidth]{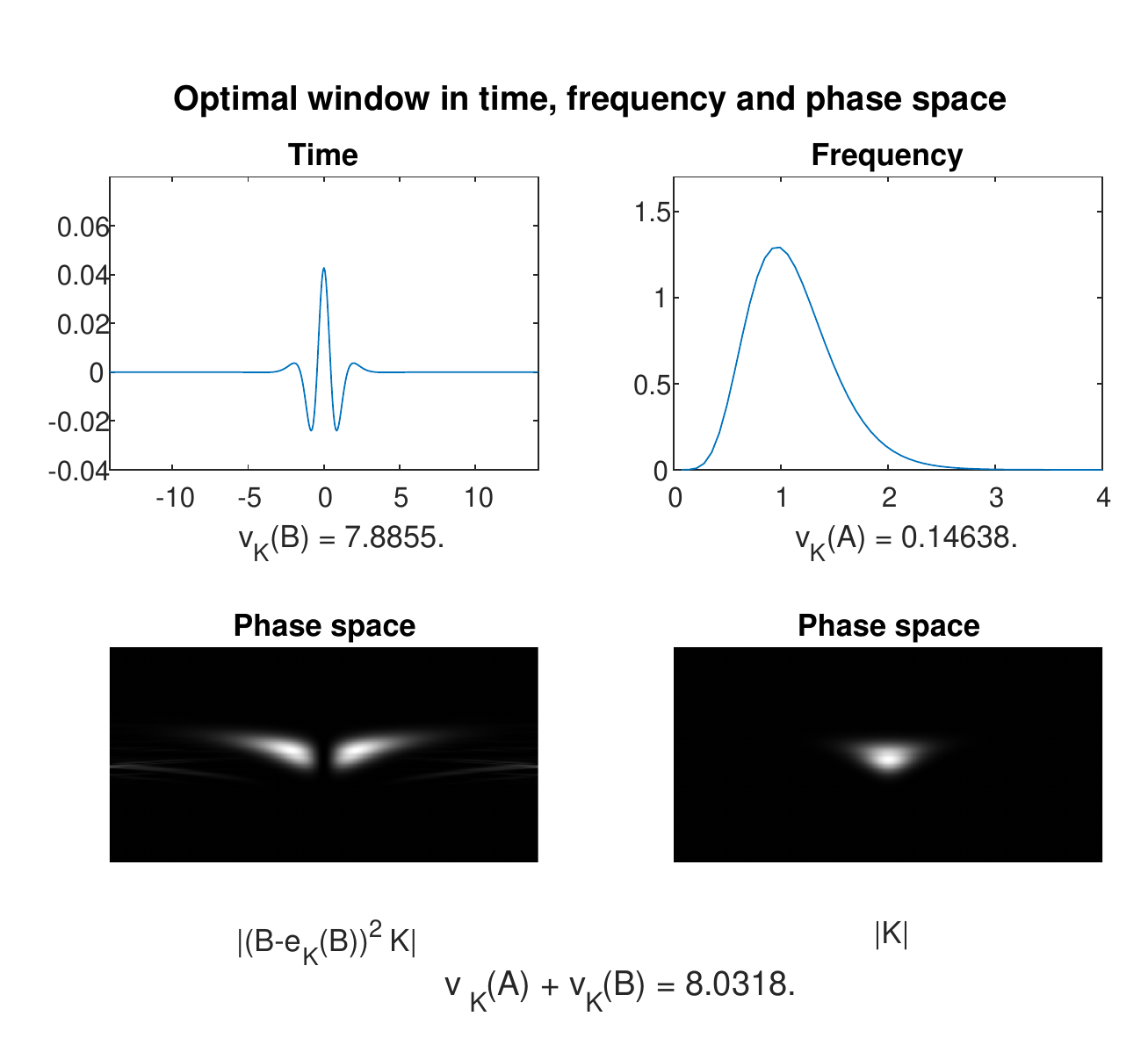} 
\caption{\textbf{Final window} in time, frequency, and phase space. To make the localization more visible in phase space, we show  $|(B-e_K(B)^2K(a,b,c)|$ in addition to  $|K(a,b,c)|$
, where $K:=V_f(f)$ is the ambiguity function.}
\label{fig:final_window}       
\end{figure*}

\begin{figure*}
\centering
  \includegraphics[width=0.45\textwidth]{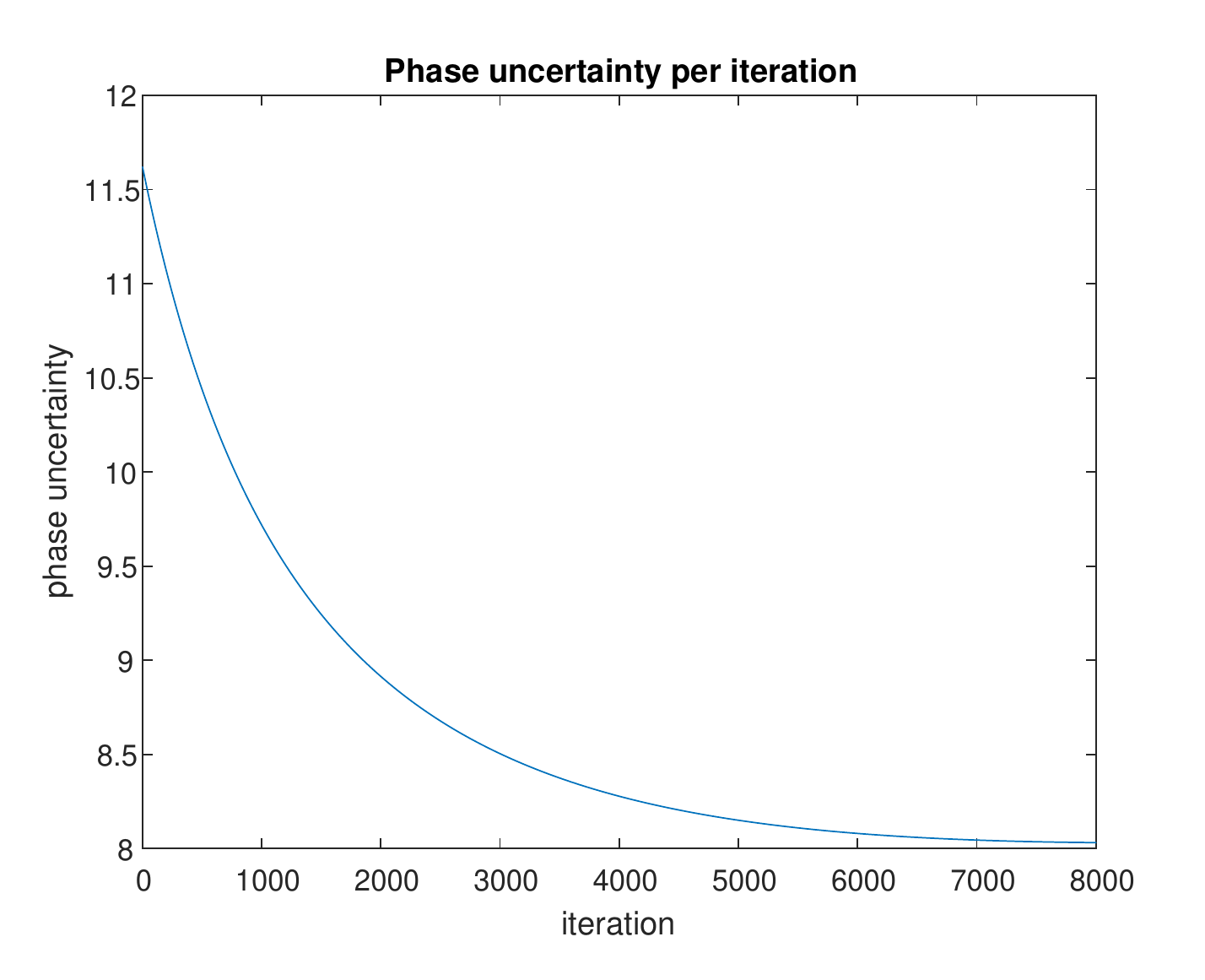}
\caption{Phase space uncertainty per gradient descent  iteration.}
\label{fig:uncertainty_window}       %
\end{figure*}

\section{Summary}\label{sec summary}
In this work, we approached the task of constructing windows for the 1D wavelet transform from the uncertainty minimization point of view. We based the approach on localization via observables, as in \cite{Levie2020}. In contrast to \cite{Levie2020}, we defined an observable-based uncertainty  directly in phase space. Our proposed optimal windows promote sparsity in phase space in a non-asymptotic manner. 
Basing the computations on the wavelet-Plancherel theorem enabled us to formulate the 2D phase space variances as a combination of 1D signal and window localization measures. This allowed us to optimize the 1D window directly, as opposed to optimizing a general 2D function in phase space, which would require complicated constraints for restricting the function to be an ambiguity function.
While we studied the 1D wavelet transform in this paper, our technique can be seen as a step-by-step guide for computing optimal windows for every generalized wavelet transform based on a semi-direct product of physical quantities (see  \cite[Section 3.3]{Levie2020}), like, for example, the Shearlet transform.
\bibliographystyle{plain}
\bibliography{references}   

\begin{appendix}

\section{Direct Computation of the Pull-Backs of Phase Space Observables}\label{appendixA}

In this appendix we compute directly the pull-back formulas of the phase space observables for simple vectors $\hf\otimes\hs$. 

	\subsection{Pull-Back of the Scale Window-Signal Observable}
	\label{A:scale}
	In this section we proof the formula for $\bT_{a}(\hf\otimes\hs)$ given in Corollary \ref{cor_simp_obs} in a direct manner. To that end, we formulate $aV_f(s)(a,b,c)$ as a linear combination of simple wavelet functions.
	From (\ref{eq tpi}), we can deduce the wavelet transform in scale space
    \begin{equation}\label{tVfs}
    \begin{split}
	       & V_{\tf}(\ts)(a,b,c) = \ip{\tpi(a,b,c)\tf}{\ts}\\
	       &= \int_{-\infty}^{\infty}\overline{e^{-ib e^{-\sigma}}}\Big(\overline{\tf\big(\sigma-a, sign(c)\big)}\ts(\sigma,1)
	       + \overline{\tf\big(\sigma-a, sign(-c)\big)}\ts(\sigma,-1)\Big)d\sigma\\
	 \end{split} 
    \end{equation}
    From (\ref{tVfs}) we can compute  
    \[\begin{split}
    & aV_{\tf}(\ts)(a,b,c) \\
    & =  \int_{-\infty}^{\infty} a \overline{e^{-ib e^{-\sigma}}}\Big(\overline{\tf\big(\sigma-a, sign(c)\big)}\ts(\sigma, 1)
	       + \overline{\tf\big(\sigma-a, sign(-c)\big)}\ts(\sigma, -1)\Big)d\sigma,
	       \end{split}\]
	 and,
	\[\begin{split}
	& V_{\sigma \tf}(\ts)(a,b,c) \\
	& = \int_{-\infty}^{\infty}\overline{e^{-ib e^{-\sigma}}}(\sigma-a)\Big(\overline{\tf\big(\sigma-a, sign(c)\big)}\ts(\sigma, 1)
	       + \overline{\tf\big(\sigma-a, sign(-c)\big)}\ts(\sigma, -1)\Big)d\sigma.
	       \end{split}\]
	Thus,
	\[
	 aV_{\tf}(\ts)(a,b,c) = V_{\tf}(\sigma \ts)(a,b,c) - V_{\sigma \tf}(\ts)(a,b,c).   
	\]
	From the definition of the scale parameter $\sigma$ we get, 
	\[\bT_a (\hf\otimes \hs)=  -\hf \otimes \big(\ln(|\w|)\hs\big) \ \ +\ \ \big(\ln(|\w'|) \hf\big) \otimes \hs.\]
	
	\subsection{Pull-Back of the Time Window-Signal Observable}
	\label{A:time}
	In this section we give a direct proof  of the formula for $\bT_b(\hf\otimes\hs)$ given in Corollary \ref{cor_simp_obs}.
	Similarly to the direct computation of $\bT_a(\hf\otimes\hs)$, we do so by formulating $bV_f(s)(a,b,c)$ as a linear combination of simple wavelet functions.
	Observe that 
	\[V_{x f}(s)(a,b,c)=\int_{-\infty}^{\infty} s(x) e^{-a/2} c e^{-a}(x-b)\overline{f\big(c e^{-a}(x-b)\big)}dx\]
	\[=c e^{-a}V_f(x s)(a,b,c) - c e^{-a}bV_f(s)(a,b,c).\]
	Thus, 
	\[bV_f(s)(a,b,c)=V_f(x s)(a,b,c)-c e^aV_{x f}(s)(a,b,c).\]
	Next, we show that $c e^aV_{x f}(s)(a,b,c)$ is a simple function.
	\[c e^a V_{x f}(s)(a,b,c) = 
	\int_{-\infty}^{\infty} s(x) e^{-\frac{a}{2}}(x-b)\overline{f\big(c e^{-a}(x-b)\big)} dx.
	\]
	Integrating by parts, setting $v'(x) := s(x)$, 
	$u(x) := e^{-\frac{a}{2}} (x-b)\overline{f\big(c e^{-a}(x-b)\big)}$,
	where the prime sign indicates derivative with respect to $x$, we get
	$v(x):= \int_{-\infty}^{x} s(t)dt$, i.e. $v=\int s$, which is the anti-derivative of $s$, and \[u'(x)=\overline{e^{-\frac{a}{2}} \Big(f\big(c e^{-a}(x-b)\big)+(x-b) c e^{-a} f'\big(c e^{-a}(x-b)\big)\Big)}.\] Thus, 
	\[
	\begin{split}
	& \int_{-\infty}^{\infty} v'(x)u(x)dx \\
	&= vu|_{-\infty}^{\infty} - \int_{-\infty}^{\infty} v(x) u'(x) dx\\
	&= -\int_{-\infty}^{\infty} \Big(\int_{-\infty}^{x} s(t)dt \Big) \overline{e^{-\frac{a}{2}} \Big(f\big(c e^{-a}(x-b)\big)+(x-b) c e^{-a} f'\big(c e^{-a}(x-b)\big)\Big)} dx.   
	\end{split}
	\]
	Note that since $\frac{1}{w}\hs\in \cS$, and since $\hf\in {\rm Dom}_{\cW}(\bT_b)$ implies $x f\in \cS$, the first term vanishes. The second term satisfies  
	\[e^{-\frac{a}{2}}\Big(f\big(ce^{-a}(x-b)\big)+(x-b) ce^{-a} f'\big(c e^{-a}(x-b)\big)\Big) = \pi(a,b,c)(\frac{\partial}{\partial x }x f).\]
	Therefore, the above equals 
	$V_{\frac{\partial}{\partial x}x f}(\int s)$, 
	 and the formula in the frequency domain is
	\[bV_f(s)(a,b,c)=V_{\hf}(i\frac{\partial}{\partial\w}\hs)(a,b,c)-V_{i\w' \frac{\partial}{\partial\w'}\hf}(\frac{1}{\w}\hs)(a,b,c).\]
	Hence, the pull-back to the window-signal space reads
	\[\bT_b(\hf\otimes\hs) = \hf\otimes i\frac{\partial}{\partial\w}\hs - i\w' \frac{\partial}{\partial\w'}\hf\otimes\frac{1}{\w}\hs.\]

\end{appendix}

\subsection*{Acknowledgements}
R.L. acknowledges support by the DFG SPP 1798 “Compressed Sensing
in Information Processing” through Project Massive MIMO-II.

N.S and E.A acknowledge support by the Israeli Ministry of Agriculture’s Kandel Program under grant no. 20-12-0030. Funding was also provided by the framework of ERA-NET Neuron via the Ministry of Health, Israel (\#3-13898).

\end{document}